\newcommand{\Aut}[0]{\operatorname{Aut}}
\newcommand{\kkX}{{\mathfrak{X} }}
\newcommand{\Z}{{\mathbb{Z} }}
\newcommand{\cB}{{\mathcal{B}}}
\newcommand{\cR}{{\mathcal{R}}}
\newcommand{\cX}{{\mathcal{X}}}
\newcommand{\cW}{{\mathcal{W}}}
\newcommand{\cY}{{\mathcal{Y}}}
\newcommand{\mult}{{\rm mult}}
\newcommand{\Supp}{{\rm Supp}}
\newcommand{\ord}{{\rm ord}}
\newcommand{\gr}{{\rm gr}}
\newtheorem{thm}{Theorem}[section]
\newtheorem{lem}[thm]{Lemma}
\newtheorem{cor}[thm]{Corollary}
\newtheorem{prop}[thm]{Proposition}
\theoremstyle{definition}
\newtheorem{defn}[thm]{Definition}
\newtheorem{condition}[thm]{Condition}
\newtheorem{example}[thm]{Example}
\newtheorem{rem}[thm]{Remark}
\newtheorem{defn-thm}[thm]{Definition--Theorem}  
\newtheorem{defn-prop}[thm]{Definition--Proposition}  
\newtheorem{defn-lem}[thm]{Definition--Lemma}  
\theoremstyle{remark}
\newcommand{\Fut}{{\rm Fut}}
\newcommand{\Proj}{{\rm Proj}}
\newcommand{\Val}{{\rm Val}}
\newcommand{\vol}{{\rm vol}}
\newcommand{\hvol}{\widehat{\rm vol}}
\newcommand{\cD}{{\mathcal{D}}}
\newcommand{\cG}{{\mathcal{G}}}
\newcommand{\cO}{{\mathcal{O}}}
\newcommand{\cV}{{\mathcal{V}}}
\newcommand{\fa}{{\mathfrak{a}}}
\newcommand{\lct}{{\rm lct}}
\newcommand{\bQ}{{\mathbb{Q}}}
\newcommand{\Q}{{\mathbb{Q}}}
\newcommand{\N}{{\mathbb{N}}}
\newcommand{\fb}{{\mathfrak{b}}}
\newcommand{\Spec}{{\rm Spec}}
\newcommand{\e}{\varepsilon}
\newcommand{\cF}{\mathcal{F}}
\newcommand{\bk}{k}
\newcommand{\co}{\colon}
\def\gitq{/\hspace{-0.1cm}/}
\renewcommand{\ss}{{\rm ss}}
\renewcommand{\tilde}{\widetilde}
\newcommand{\bA}{\mathbb{A}}
\newcommand{\bG}{\mathbb{G}}
\newcommand{\bP}{\mathbb{P}}
\newcommand{\bZ}{\mathbb{Z}}
\newcommand{\oh}{\mathcal{O}}
\newcommand{\cE}{\mathcal{E}}
\newcommand{\cI}{\mathcal{I}}
\newcommand{\cL}{\mathcal{L}}
\newcommand{\cP}{\mathcal{P}}
\newcommand{\cS}{\mathcal{S}}
\newcommand{\uProj}{\underline{{\rm Proj}}}
\DeclareMathOperator{\ST}{ST}
\DeclareMathOperator{\colim}{colim}
\DeclareMathOperator{\PGL}{PGL}
\DeclareMathOperator{\Isom}{Isom}
\newcommand{\ilim}{\varprojlim}
\begin{document}
\title{Reductivity of the automorphism group of K-polystable Fano varieties}
\date{\today}

\author {Jarod Alper}
\address   {Department of Mathematics, University of Washington, Box 354350, Seattle, WA
98195-4350, USA}
\email     {jarod@uw.edu}

\author{Harold Blum}
\address{Department of Mathematics, University of Utah, Salt Lake City, UT 48112, USA}
\email{blum@math.utah.edu}

\author {Daniel Halpern-Leistner}
\address{Malott Hall, Mathematics Dept, Cornell University, Ithaca, NY 14853, USA}
\email     {daniel.hl@cornell.edu}

\author {Chenyang Xu}
\address   {MIT,       Cambridge 02139, USA}
\email     {cyxu@math.mit.edu}
\address   {Beijing International Center for Mathematical Research,       Beijing 100871, China}
\email     {cyxu@math.pku.edu.cn}

\begin{abstract}{
We prove that  K-polystable log Fano pairs have reductive automorphism groups. 
In fact, we deduce this statement by establishing more general results concerning the S-completeness and $\Theta$-reductivity of the moduli of K-semistable 
log Fano pairs. 
Assuming the conjecture that K-semistability is an open condition, we prove that the Artin stack parametrizing K-semistable Fano varieties admits a separated good moduli space.
}
\end{abstract}

\maketitle{}
\setcounter{tocdepth}{1}
\tableofcontents

{\let\thefootnote\relax\footnotetext{
JA was partially supported by NSF grant DMS-1801976. 
HB was partially supported by NSF grant DMS-1803102.  
DHL was partially supported by NSF grant DMS-1762669. 
CX was partially supported by a Chern Professorship of the MSRI (NSF No. DMS-1440140), by NSFC grant No. 11425101 (2015-2018) and by
NSF grant DMS-1901849.
}
\marginpar{}
}

\emph{Throughout, we work over an algebraically closed  field $k$ of characteristic 0.}

\section{Introduction}\label{s-intro}

The construction of moduli spaces parametrizing K-semistable and K-polystable Fano varieties is a profound goal in the study of Fano varieties. 
The \emph{K-moduli Conjecture} predicts that the moduli functor 
$\kkX^{\rm Kss}_{n,V}$ of K-semistable $\mathbb{Q}$-Fano varieties of dimension $n$ and volume $V$, which sends a $k$-scheme $S$ to
 \[
{\kkX^{\rm Kss}_{n,V}} (S) = 
 \left\{
  \begin{tabular}{c}
\mbox{Flat proper families $X\to S$, whose geometric fibers are }\\
\mbox{K-semistable $\Q$-Fano varieties of dimension $n$ and  }\\
\mbox{volume $V$, satisfying Koll\'ar's condition (see \cite[\S 1]{BX18})}
  \end{tabular}
\right\},
\]
is represented by a finite type Artin stack $\kkX^{\rm Kss}_{n,V}$  and it admits a projective good moduli space $\kkX^{\rm Kss}_{n,V}\to X^{\rm Kps}_{n,V}$ (see Definition \ref{d-goodmoduli}), whose closed points precisely parameterize $n$-dimensional K-polystable $\Q$-Fano varieties of volume $V$. 
The ingredients needed in the construction can be translated into deep properties of such Fano varieties. See \cite[Introduction]{BX18} for a more detailed discussion of the prior state of the art.

\subsection{Main theorems}

In this paper, we show that if the moduli functor $\kkX^{\rm Kss}_{n,V}$ is represented by an Artin stack, then it admits a separated good moduli space (see Step (III) in \cite[Introduction]{BX18}). A prototype of the good moduli space of a stack is given by the morphism $[X^{\rm ss}/G]\to X /\!\!/G$ to the geometric invariant theory (GIT) quotient of a polarized projective variety $(X,L)$ by a reductive group $G$. However, for the question of K-stability of Fano varieties, it is not clear how to realize it as a GIT question: on the one hand, we know there are K-polystable Fano varieties which are not asymptotically Chow semistable (see e.g. \cite{OSY12, LLSW17}); on the other hand, the more natural CM line bundle is not positive on the Hilbert scheme (see \cite{FR06}).

Roughly speaking, for moduli problems which are not known to be global GIT quotients, however, we still aim to find a quotient  space, such that the quotient morphism behaves as well as the GIT quotient morphism $[X^{\rm ss}/G]\to X /\!\!/G$ from many perspectives (see Definition \ref{d-goodmoduli}).  
In this note, we adapt the general framework developed in \cite{AHH18} to the case of K-semistable $\mathbb{Q}$-Fano varieties.
\begin{thm}\label{t-main}
The functor $\kkX^{\rm Kss}_{n,V}$ satisfies the valuative criterion for  $S$-completeness (see Definition \ref{d-scomplete}) and $\Theta$-reductivity (see Definition \ref{d-thetared}) with respect to essentially of finite type DVRs.
\end{thm}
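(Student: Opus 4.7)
I begin by unpacking both valuative criteria in concrete moduli-theoretic terms. After descent through the natural open covers, a map $\Theta_R\setminus\{0\}\to \kkX^{\rm Kss}_{n,V}$ amounts to a K-semistable family $X\to\spec R$ together with a test configuration $\mathcal{X}_K$ of the generic fiber $X_K$ whose central fiber is itself K-semistable; $\Theta$-reductivity asks to extend these data to a test configuration $\mathcal{X}\to\mathbb{A}^1_R$ of $X$ whose fiber at the origin of the closed fiber of $\spec R$ is K-semistable. Similarly, a map $\overline{\ST}_R\setminus\{0\}\to\kkX^{\rm Kss}_{n,V}$ records a pair of K-semistable families $X_1,X_2\to\spec R$ together with an isomorphism of generic fibers $X_{1,K}\simeq X_{2,K}$, and $S$-completeness demands an extension to a $\mathbb{G}_m$-equivariant family over $\spec(R[s,t]/(st-\pi))$ whose fiber at the closed $\mathbb{G}_m$-fixed point is a K-semistable Fano providing a common degeneration of the two special fibers.

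For $\Theta$-reductivity, the plan is to encode $\mathcal{X}_K$ as a finitely generated $\mathbb{Z}$-filtration on the anticanonical section ring $R(X_K,-rK_{X_K})$ for a sufficiently divisible $r$. Since $R$ is a DVR, I would extend the filtration to $R(X,-rK_{X})$ by taking the scheme-theoretic closure of each graded piece (equivalently, its saturation), and a relative $\uProj$ construction then produces a test configuration $\mathcal{X}/\mathbb{A}^1_R$ extending $\mathcal{X}_K$. The remaining step, which I expect to be the main obstacle, is verifying that the fiber of $\mathcal{X}$ over the origin of the closed fiber is K-semistable: morally this limit arises as a specialization (along $\spec R$) of the K-semistable central fiber of $\mathcal{X}_K$, and I would try to deduce K-semistability via a lower semicontinuity argument for the stability threshold $\delta$ applied to the flat one-parameter family of central fibers.

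For $S$-completeness, I would repackage the data as two $R$-lattices $L_1,L_2\subset R(X_K,-rK_{X_K})$ coming from the graded structure on $X_1$ and $X_2$. The $\mathbb{Z}$-filtration with $m$-th piece $L_1\cap\pi^m L_2$ gives rise to a Rees-type double-graded algebra that is naturally flat over $R[s,t]/(st-\pi)$ and carries a $\mathbb{G}_m$-action with weights $1$ on $s$ and $-1$ on $t$; passing to relative $\uProj$ yields the sought-after extension $\overline{\ST}_R\to \kkX^{\rm Kss}_{n,V}$. The crux, parallel to the previous case, is to show that the fiber over $\{s=t=0\}$, which is the associated graded of the filtration and simultaneously a degeneration of both $X_{1,s}$ and $X_{2,s}$, is a K-semistable Fano variety.

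The main obstacle in both parts is therefore the K-semistability of the distinguished central fiber output by the extension, together with its finite generation as a degeneration. I would handle these using the valuative interpretation of K-stability, specifically lower semicontinuity and constructibility properties of the $\delta$-invariant in flat families, combined with the K-semistability of the generic member that is hypothesized. In the $S$-completeness case this approach is closely intertwined with the uniqueness of K-polystable degenerations established in \cite{BX18}, which I expect to be a key technical input. Preservation of Koll\'ar's condition and the verification of finite generation of the extended filtration are separate but largely formal checks given the DVR hypothesis.
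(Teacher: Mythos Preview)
Your high-level outline of the Rees-type constructions is correct and matches the paper: for $S$-completeness one takes the bigraded algebra $\bigoplus_{m,p}(V_m\cap\pi^p V'_m)$ over $R[s,t]/(st-\pi)$, and for $\Theta$-reductivity one saturates the filtration coming from the test configuration to $\cG^pV_m:=V_m\cap\cG_K^pV_{K,m}$. However, there are two genuine gaps.

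\textbf{Finite generation is the heart of the argument, not a formality.} You write that ``verification of finite generation of the extended filtration [is] largely formal \dots given the DVR hypothesis,'' but this is precisely the hard step. Over a DVR there is no reason for the saturated filtration to remain finitely generated: one must show that the associated graded ring over the closed point is finitely generated (Lemma~\ref{L:filling_conditions}), and this is a deep statement about the birational geometry of the special fiber. For $S$-completeness the paper reduces it (Proposition~\ref{p-comparefilt}) to the finite generation theorem of \cite{BX18}, which in turn uses the MMP. For $\Theta$-reductivity the paper gives a substantial new argument (Proposition~\ref{p-extendE}): one passes to the cone, builds a ray of quasi-monomial valuations, and uses K-semistability of \emph{both} fibers together with Li's derivative formula for $\hvol$ to show that the gap $A-\lct$ tends to zero; then ACC for log canonical thresholds and an MMP extraction produce the needed model. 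None of this is available without the K-semistability hypothesis on the closed fiber of $\Spec(R)$, which your sketch does not use.

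\textbf{Lower semicontinuity of $\delta$ points the wrong way.} You propose to deduce K-semistability of the new central fiber by ``lower semicontinuity of $\delta$'' along the family of central fibers over $\Spec(R)$. But K-semistability is an \emph{open} condition, not a closed one (indeed a K-semistable Fano can specialize to a K-unstable one), so knowing $\delta\geq1$ at the generic point of $\Spec(R)$ says nothing about the closed point. The paper's mechanism is different: the Futaki invariant of the induced test configuration over $\kappa$ equals the one over $K$ by deformation invariance of intersection numbers, and the latter vanishes because the central fiber over $K$ is K-semistable; then $\Fut=0$ on a special test configuration of a K-semistable pair forces the limit to be K-semistable (Lemma~\ref{l-sequivalent}, from \cite{LWX18}). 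Along the way one must also check that the extension is a $\Q$-Gorenstein family of log Fano pairs (normality of the new fiber, $K_{\tilde\cX}+\tilde\cD$ being $\Q$-Cartier, the boundary not containing a fiber), which is done via the lc property of the cone degeneration and adjunction-type arguments rather than anything formal.
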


For an Artin stack of finite type with affine diagonal over a field of characteristic $0$, \cite[Theorem A]{AHH18} states that the conditions of S-completeness and $\Theta$-reductivity are equivalent to the existence of a separated good moduli space. An immediate corollary is that
\begin{cor}\label{c-goodmoduli}
Let $\kkX\subset \kkX^{\rm Kss}_{n,V}$ be a subfunctor representable by an Artin stack of finite type, such that if $x\in \kkX$ then $\overline{\{x\}}\subset \kkX$. Then $\kkX$ admits a separated good moduli space.
\end{cor}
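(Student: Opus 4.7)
The plan is to invoke \cite[Theorem A]{AHH18}, which asserts that a finite type Artin stack over $k$ with affine diagonal admits a separated good moduli space if and only if it is S-complete and $\Theta$-reductive with respect to essentially finite type DVRs. Since $\kkX$ is given to be a finite type Artin stack, three properties remain to verify: (a) affine diagonal, (b) S-completeness, and (c) $\Theta$-reductivity.

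For (a), it suffices to show that for any scheme $T$ and any two families $X_1, X_2 \to T$ of K-semistable $\mathbb{Q}$-Fanos, the isomorphism sheaf $\Isom_T(X_1, X_2)$ is affine over $T$. After choosing $m$ divisible enough that each $\omega_{X_i/T}^{[-m]}$ is a relatively very ample line bundle, $\Isom_T(X_1, X_2)$ embeds as a locally closed subscheme of the classical Isom scheme of the associated polarized families, which is quasi-projective, hence affine, over $T$.

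For (b) and (c), the valuative criteria of Definitions \ref{d-scomplete} and \ref{d-thetared} are unique-extension problems for families over the stacky $R$-schemes $\Theta_R$ and $\overline{\ST}_R$, given a family over the corresponding puncture. Given such an extension problem for $\kkX$, I would post-compose with the inclusion $\kkX \hookrightarrow \kkX^{\rm Kss}_{n,V}$ and apply Theorem \ref{t-main} to obtain a unique extension as a family of K-semistable $\mathbb{Q}$-Fanos. Every newly appearing point of the extended test scheme is a specialization of a point whose image lies in $|\kkX|$, so by the specialization-closed hypothesis it also lies in $|\kkX|$, and one concludes that the extended family descends to an object of $\kkX$.

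The step I anticipate being the main obstacle, modulo Theorem \ref{t-main}, is the final descent from the set-theoretic condition (``image in $|\kkX|$'') to the functorial one (``the family is an object of $\kkX$''). One must verify that a family in $\kkX^{\rm Kss}_{n,V}$ over $\Theta_R$ or $\overline{\ST}_R$ whose geometric points all lie in $|\kkX|$ is actually an object of $\kkX$, exploiting the finite-type representability of $\kkX$ and the reducedness of the test stacks; the specialization-closed hypothesis is tailored for exactly this purpose. Once this is established, all hypotheses of \cite[Theorem A]{AHH18} are met and the separated good moduli space exists.
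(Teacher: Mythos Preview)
Your strategy for (b) and (c) is essentially the paper's: extend in the ambient functor via Theorem \ref{t-main}, then use the closure hypothesis to show the extension lands in $\kkX$. The paper packages this descent step as Lemma \ref{l-substack}, whose proof uses Zariski's main theorem to reduce to the case of an open immersion, after which the reducedness argument you sketch goes through.

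There is, however, a genuine gap in your step (a). The implication ``quasi-projective, hence affine'' is false; $\bP^1_T$ is quasi-projective over $T$ but not affine. What you want is a different argument: any isomorphism $X_1 \to X_2$ of Fano varieties automatically carries $\omega_{X_2}^{[-m]}$ to $\omega_{X_1}^{[-m]}$, so $\Isom_T(X_1,X_2)$ coincides with the polarized isomorphism scheme $\Isom_T\big((X_1,L_1),(X_2,L_2)\big)$. After embedding $X_i \hookrightarrow \bP(V_i)$ via the very ample $L_i$, this polarized Isom is a \emph{closed} subscheme of $\Isom_T(\bP(V_1),\bP(V_2))$, which is a $\PGL$-torsor and therefore affine over $T$. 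Alternatively, since $\kkX \hookrightarrow \kkX^{\rm Kss}_{n,V}$ is a monomorphism and the ambient stack $\kkX_{n,V}$ is already known to have affine diagonal, the diagonal of $\kkX$ is a base change of the ambient diagonal and is automatically affine; the paper simply takes this as known and does not revisit it in the proof of the corollary.
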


The stack $\kkX_{n,V}$ is an Artin stack with affine diagonal, and it is known that the semistable locus is bounded (cf. \cite{Jia17}), so it remains to show that $\kkX_{n,V}^{\rm Kss} \subset \kkX_{n,V}$ is an open substack (see \cite[Step II]{BX18}). This question was settled shortly after this paper was first released (see Remark \ref{r-postscript}). For smoothable K-semistable Fano varieties,  the existence of the good moduli space as well as its properness were settled in \cite{LWX14}. 


In fact, we prove $S$-completeness and $\Theta$-reductivity of the moduli functor parameterizing families of K-semistable log Fano pairs. Since $S$-completeness implies the reductivity of the automorphism group of any polystable point, we can conclude:   

\begin{thm}\label{t-aut}
If $(X,D)$ is a K-polystable log Fano pair, then ${\rm Aut}(X,D)$ is reductive.
\end{thm}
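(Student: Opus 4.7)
The plan is to derive Theorem \ref{t-aut} from Theorem \ref{t-main} by invoking the general principle from \cite{AHH18} that S-completeness of an Artin stack forces the stabilizer at every closed point to be reductive. I would carry this out in three steps.

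First, I would view $(X,D)$ as a $k$-point of the moduli groupoid $\kkX$ of K-semistable log Fano pairs sharing the numerical invariants of $(X,D)$, so that $\Aut(X,D)$ is the stabilizer of this point. K-polystability of $(X,D)$ is precisely the statement that this point is closed in $\kkX$: any isotrivial K-semistable degeneration of $(X,D)$ is again isomorphic to $(X,D)$.

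Second, I would transfer S-completeness from $\kkX$ to the residual gerbe $BG \hookrightarrow \kkX$ at $(X,D)$, where $G := \Aut(X,D)$. With the test stack $\overline{\ST}_R := [\Spec R[s,t]/(st-\pi)/\mathbb{G}_m]$ (where $\mathbb{G}_m$ acts with weights $(1,-1)$ and $R$ is an essentially-of-finite-type DVR over $k$), Theorem \ref{t-main} says that every map $\overline{\ST}_R \setminus \{0\} \to \kkX$ extends uniquely to $\overline{\ST}_R$. For a family that factors through $BG$ on the punctured stack, the extension at the closed point $0$ must still be isomorphic to $(X,D)$ by closedness of the orbit, and hence automatically factors through $BG$. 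Thus $BG$ itself inherits S-completeness.

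Finally, I would invoke the equivalence that $BG$ is S-complete if and only if $G$ is reductive. The nontrivial direction proceeds by contradiction: if $G$ had a nontrivial unipotent radical, then a suitable cocharacter $\mathbb{G}_m \to G$ would let one construct two non-isomorphic extensions of a $G$-torsor on the punctured $\overline{\ST}_R$ to all of $\overline{\ST}_R$, contradicting S-completeness of $BG$. The genuine obstacle in this program is Theorem \ref{t-main} itself; once that is in hand, Theorem \ref{t-aut} is essentially a formal consequence of the stack-theoretic machinery of \cite{AHH18}, with the only substantive input being the translation of K-polystability into closedness of the corresponding point of $\kkX$.
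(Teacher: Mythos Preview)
Your outline is exactly the formal stack-theoretic argument the paper itself describes at the beginning of Section~4, and the authors explicitly acknowledge that it would work \emph{provided} one knew that a suitable moduli stack of K-semistable log Fano pairs is represented by a finite type Artin stack with affine diagonal. The gap in your proposal is that you silently assume this representability. At the time the paper was written this was open even for $\bQ$-Fano varieties with no boundary (see Remark~\ref{r-postscript}), and for log pairs the correct moduli functor over a general base had not even been formulated (again Remark~\ref{r-postscript}, citing \cite{Kol19b}). Without the Artin stack structure, Remark~\ref{R:reductive} does not apply, and your step~2 (passing from S-completeness of $\kkX$ to S-completeness of the residual gerbe $BG$) has no meaning: there is no residual gerbe to speak of.

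The paper circumvents this by giving a direct, stack-free argument. Rather than invoking Remark~\ref{R:reductive}, it uses the converse Iwahori criterion (Proposition~\ref{l-iwahori}): a linear algebraic group $G$ is reductive if and only if $G(K)=G(R)\Lambda_G G(R)$ for $R=k[[\pi]]$. To verify this decomposition for $G=\Aut(X,D)$, one takes $g\in G(K)$, glues two trivial families along $g$ to get a family over $\overline{\ST}_R\setminus 0$, and applies the concrete S-completeness result (Theorem~\ref{t-scomplete}, in the isotrivial form of Proposition~\ref{p-scompKps}) to fill in the central fiber. K-polystability forces this fiber to be $(X,D)$ again, the $\bG_m$-action on it gives a one-parameter subgroup $\lambda$, and a deformation-theoretic argument (trivializing an $\Isom$-torsor over $\overline{\ST}_R$) produces the factorization $g=a^{-1}\lambda(\pi)b$. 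Your approach is cleaner once the stack machinery is available; the paper's approach is what is actually needed to make the result unconditional.
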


This theorem has a long history: it is a classical result for K\"ahler-Einstein Fano manifolds in \cite{Mat57} (and even holds in the more general  case of polarized manifolds with  constant scalar curvature metrics). 
For log Fano pairs with a weak conical K\"ahler-Einstein metric, this is a much harder result and it is a key step in the proofs of the Yau-Tian-Donaldson Conjecture for smooth Fano manifolds (see e.g. \cites{CDS, Tia15, BBEGZ11}).
 Our method is purely algebro-geometric. In \cite{BX18},  it was shown that if $(X,D)$ is K-stable, then ${\rm Aut}(X,D)$ is finite. That paper also establishes a key ingredient in the proof of Theorem \ref{t-aut}, the Finite Generation Condition \ref{a-finitegenerate}. We also note that when $X$ is only K-semistable, then ${\rm Aut}(X)$ can be non-reductive (see \cite{CS18}*{Example 1.4}). 

\subsection{Sketch of the proof}

We sketch the main ideas in the proof of Theorem \ref{t-main}. 
The conditions of $S$-completeness and $\Theta$-reductivity of $\kkX^{\rm Kss}_{n,V}$ both involve extending a  family of K-semistable $\mathbb{Q}$-Fano varieties over the complement of a closed point in a certain regular surface to a family over the entire surface. We first show that the pushforward sheaves of $m$-th relative anti-pluri-canonical line bundles extend, then we prove that the direct sum of these sheaves is finitely generated.  After taking $\Proj$ of this algebra, we argue that the central fiber is  a K-semistable $\mathbb{Q}$-Fano variety, which gives the desired extension of the family of $K$-semistable $\mathbb{Q}$-Fano varieties. Of course, such finite generation results  are highly nontrivial. Fortunately, for families of K-semistable Fano varieties, the finite generation needed for S-completeness was essentially settled in \cite{BX18} and the case for $\Theta$-reductivity is proved in Section \ref{s-theta}, closely following similar arguments in \cite{LWX18}. This general strategy could  conceivably be applied to general K-semistable polarized varieties;  however, the corresponding finite generation statements  (see Conditions \ref{a-finitegenerate} and \ref{a-finitegenerate2}) appear to be very challenging.

\medskip

We now explain in more detail the proof of S-completeness.  
We say any two K-semistable $\bQ$-Fano varieties lie in the same S-equivalence class if they degenerate to a common K-semistable $\bQ$-Fano variety via special test configurations (see e.g.\cite{BX18}*{Def. 2.6}).
The first extensive study of the geometry of K-semistable $\mathbb{Q}$-Fano varieties belonging to the same S-equivalence class was completed in \cite{LWX18}. In particular, it was shown that there is a unique object, namely a K-polystable $\mathbb{Q}$-Fano variety, in each S-equivalence class.

Then in \cite{BX18}, the study of families of K-semistable Fano varieties is extended from test configurations to families over a curve.  
Namely, given two $\mathbb{Q}$-Gorenstein families of K-semistable $\Q$-Fano varieties $f\colon X\to C$ and $f'\colon X' \to C$ over the germ of a pointed smooth curve $(C={\rm Spec}(R),0)$ and an isomorphism $X\times_C (C \setminus 0) \cong X'\times_C (C \setminus 0)$, \cite{BX18} established that $X_0$ and $X_0'$ are always $S$-equivalent.  The argument for this fact can be divided into two parts: (1) one constructs filtrations $\cF$ and $\cF'$ of $V:=\bigoplus_m V_m=\bigoplus_m H^0(X_0,-mrK_{X_0})$ and $V'=\bigoplus_m V'_m = \bigoplus_m H^0(X'_0,-mrK_{X'_0} )$ for some fixed sufficiently divisible $r$ such that ${\rm gr}_\cF(V)=\bigoplus_m{\gr}_\cF(V_m)$ is isomorphic to ${\rm gr}_{\cF'}(V')=\bigoplus_m{\gr}_{\cF'}(V'_m)$, and (2) one shows that the above graded rings are indeed finitely generated and moreover that their ${\rm Proj}$ give a common K-semistable degeneration of $X_0$ and $X_0'$.

Meanwhile, the property of S-completeness was introduced in \cite{AHH18} as part of a general criterion for the existence of good moduli space (see Theorem \ref{T:existence}).
 The first key observation in this paper is that the construction of the filtration in \cite{BX18} indeed can be put into this framework of S-completeness. More precisely, in the current note, we verify that for each fixed $m$, in the above construction from \cite{BX18}, the $m$-th graded module, ${\gr}_\cF(V_m)\cong {\rm gr}_{\cF'}(V'_m)$ is  precisely the fiber over $0$ of the pushforward along $ \overline{\ST}_R \setminus 0\subset \overline{\ST}_R $  (where $\overline{\ST}_R$ is a local model of the quotient $[\bA^2/\bG_m]$ with weights $1$ and $-1$---see \eqref{E:ST} for the precise definition) of  the locally free sheaf over $ \overline{\ST}_R \setminus 0$ obtained by gluing $\cV_m=f_*(-mrK_{X/C})$ and $\cV'_m=f'_*(-mrK_{X'/C})$. Indeed, we show that the graded module in \cite{BX18} is the same, up to a grading shift, as the one naturally arising from the module over $\overline{\ST}_R$. Hence by taking the direct sum over all $m$, we produce a graded algebra over $\overline{\ST}_R$, which is finitely generated exactly by the finite generation results proved in \cite{BX18}. Finally, by taking the ${\rm Proj}$, we construct the extended family of K-semistable $\mathbb{Q}$-Fano varieties over $\overline{\ST}_R$.

In some sense,  the $S$-completeness criterion in  \cite{AHH18}  provides a conceptual framework for enhancing the `pointwise'  results in  \cite{LWX18, BX18} to results over families. Remarkably, this even yields new results for a single Fano variety, e.g. Theorem \ref{t-aut}.

\medskip

To prove the $\Theta$-reductivity (see Definition \ref{d-thetared}), we need to show that, given a family of K-semistable $\mathbb{Q}$-Fano varieties $f\colon X\to C$ over the germ of a pointed curve $(C={\rm Spec}(R),0)$,  any family of test configurations for $X\times_C (C \setminus 0)$ over $C \setminus 0$ with K-semistable central fibers can be extended to a family of test configurations for $X$ over $C$ with K-semistable central fibers. When $X/C$ itself is a test configuration, the proof is contained in \cite{LWX18}. To establish the $\Theta$-reductivity, we need to generalize the argument in \cite{LWX18} from the base curve being $\Theta=[\mathbb{A}^1/\mathbb{G}_m]$ to a more general base curve $C$. 
Nevertheless, the techniques are similar. 

\medskip

\begin{rem}[Postscript] \label{r-postscript}
After the first version of the current paper was written, there were two related developments. First, it was proved in \cite{BLX19} and \cite{Xu20} that,  for a family of log Fano pairs, the locus where the fibers are K-semistable is open. This together with \cite{Jia17} implies the functor $\kkX^{\rm Kss}_{n,V}$  is represented by an Artin stack of finite type. Therefore, we can apply Theorem \ref{c-goodmoduli} to $\kkX^{\rm Kss}_{n,V}$ itself and conclude it admits a good moduli space. 
Second, the moduli functors of log Fano pairs over a general base has been appropriately defined in \cite{Kol19b}, which also can be shown to be represented by an Artin stack. The results in this paper then confirm this Artin stack also has a good moduli space. For a detailed account, see \cite{XZ19}*{Sec. 2.6}. 
\end{rem}


 \bigskip


\noindent{\bf Acknowledgement: }  JA and CX thank Xiaowei Wang, and CX thanks Jun Yu for helpful conversations.  We thank the referees for suggestions on revising the paper. Much of the work on this paper was completed
while the authors enjoyed the hospitality of the MSRI, which is gratefully acknowledged.

\section{Preliminaries}\label{s-gms}

\subsection{Good moduli spaces}
In this section, we discuss some general facts about {\it good moduli spaces}. The following definition was introduced in \cite{Alp13}.

\begin{defn}[Good moduli space]\label{d-goodmoduli}
If $\cX$ is an Artin stack of finite type over $\bk$, a morphism $\phi \co \cX \to X$ to an algebraic space is called a {\it good moduli space} if (1) $\phi_*$ is exact on the category of coherent $\oh_{\cX}$-modules and (2) $\oh_X \to \phi_* \oh_{\cX}$ is an isomorphism.
\end{defn}

\begin{rem} We note that $X$ is unique as the map $\cX \to X$ is initial for maps to algebraic spaces \cite[Thm.~6.6]{Alp13} and $X$ is necessarily of finite type over $\bk$ \cite[Thm.~4.16(xi)]{Alp13}.  Moreover, two $\bk$-points of $\cX$ are identified in $X$ if and only if their closures intersect \cite[Thm.~4.16(iv)]{Alp13}.  In particular, there is a bijection between the closed $\bk$-points of $\cX$ (i.e. \emph{the polystable objects}) and the $\bk$-points of $X$.  

The canonical example arises from GIT: if $G$ is a reductive group acting on a closed $G$-invariant subscheme $X \subset \bP(V)$, where $V$ is a finite dimensional $G$-representation, then the morphism 
$$[X^{\ss}/G] \to X^{\ss}\gitq G := \Proj \bigoplus_m H^0(X, \oh_X(m))^G$$ to the GIT quotient is a good moduli space. 

However, the K-stability moduli problem does not have a known GIT interpretation. So to prove the moduli stack $\kkX^{\rm Kss}_{n,V}$ yields a good moduli space $X^{\rm Kps}_{n,V}$ is quite nontrivial.   
\end{rem}

\subsubsection{S-completeness}

Let $R$ be a DVR over $k$ with fraction field $K$, residue field $\kappa$, and uniformizing parameter $\pi$.  We define the Artin stack
\begin{equation} \label{E:ST}
\overline{\ST}_R := [\Spec \big(R[s,t] / (st-\pi)\big)  / \bG_m],
\end{equation}
where $s$ and $t$ have weights $1$ and $-1$.  This can be viewed as a local model of the quotient $[\bA^2/\bG_m]$ where $\bA^2$ has coordinates $s$ and $t$ with weights $1$ and $-1$; indeed, $\overline{\ST}_R$ is the base change of the good moduli space $[\bA^2/\bG_m] \to \Spec(\bk[st])$ along $\Spec R \to  \Spec(\bk[st])$ defined by $st \mapsto \pi$.
We denote by $0 \in \overline{\ST}_R$ the unique closed point defined by the vanishing of $s$ and $t$.  Observe that $\overline{\ST}_R \setminus 0$ is the non-separated union $\Spec(R) \bigcup_{\Spec(K)} \Spec(R)$.  

Denote $\Theta_{\kappa} = [\bA_{\kappa}^1 / \bG_m]$ as the quotient of the usual scaling action.   The following two cartesian diagrams yield a useful schematic picture of $\overline{\ST}_R$

\begin{equation} \label{E:ST-schematic}
\begin{split}
\xymatrix{
			&	\Spec(R)\ar@{^(->}[rd]^{s \neq 0}		&		& \Theta_{\kappa} \ar@{_(->}[ld]_{s = 0}\\
\Spec(K)\ar@{^(->}[ru]\ar@{^(->}[rd]		&		& \overline{\ST}_R 		&		& B_{\kappa}\bG_m \ar@{_(->}[lu]\ar@{_(->}[ld]	\\
			& \Spec(R) \ar@{^(->}[ru]_{t \neq 0}	&	& \Theta_{\kappa} \ar@{_(->}[lu]^{t = 0}
}
\end{split}
\end{equation}
where the maps to the left are open immersions and to the right are closed immersions.

\begin{defn}[$S$-completeness]\label{d-scomplete} A stack $\cX$ over $\bk$  is \emph{S-complete} if for any DVR $R$ and any diagram
\begin{equation} \label{E:S-complete}
\begin{split}
\xymatrix{
\overline{\ST}_R  \setminus 0 \ar[r] \ar[d]					& \cX  \\
\overline{\ST}_R \ar@{-->}[ur]		&
}
\end{split} \end{equation}
there exists a unique dotted arrow filling in the diagram.  

Moreover, if $R$ is a DVR, we say that $\cX$ \emph{satisfies the valuative criterion for S-completeness for $R$} if any diagram \eqref{E:S-complete} can be uniquely filled in.
\end{defn}

\begin{rem}\label{r-reductive} 
This definition was introduced for Artin stacks in \cite[\S 3.5]{AHH18}.  At the time this paper was written, it was not known if $\kkX^{\rm Kss}_{n,V}$ was an Artin stack, so we were careful not to assume this about $\cX$. This question has since been resolved (see Remark \ref{r-postscript}).
\end{rem}

\begin{rem} \label{r-reductive}
 If $\cX$ is Deligne-Mumford, then $\cX$ is S-complete if and only if $\cX$ is separated (\cite[Prop.~3.44]{AHH18}).  
 If $\cX$ is an Artin stack with affine diagonal, then any lift is automatically unique  (\cite[Prop.~3.40]{AHH18}). 
\end{rem}

\begin{rem} \label{R:reductive}
 If $G$ is a linear algebraic group over $\bk$, then $BG$ is S-complete (equivalently S-complete with respect to essentially of finite type DVRs) if and only if $G$ is reductive (\cite[Prop.~3.45 and Rem.~3.46]{AHH18}).  Moreover, as S-completeness is preserved under closed substacks, it follows that every closed point (i.e. polystable object) in an Artin stack with affine diagonal, which is S-complete with respect to essentially of finite type DVRs, has \emph{reductive stabilizer}.
\end{rem}

\subsubsection{$\Theta$-reductivity}

We define $\Theta = [\bA^1 / \bG_m]$ with coordinate $x$ on $\bA^1$ having weight $-1$, and we set $\Theta_R = \Theta \times_{\bk} \Spec(R)$ for any DVR $R$.  We let $0 \in \Theta_R$ be the unique closed point defined by the vanishing of $x$ and the uniformizing parameter $\pi \in R$.  Observe that $\Theta_R \setminus 0 = \Theta_K \bigcup_{\Spec(K)} \Spec(R)$.  Analogous to \eqref{E:ST-schematic}, we have the two following cartesian diagrams
\begin{equation} \label{E:ThetaR-schematic}
\begin{split}
\xymatrix{
			&	\Spec(R)\ar@{^(->}[rd]^{x \neq 0}		&		& B_R \bG_m \ar@{_(->}[ld]_{x = 0}\\
\Spec(K)\ar@{^(->}[ru]\ar@{^(->}[rd]		&		& \Theta_R 		&		& B_{\kappa}\bG_m \ar@{_(->}[lu]\ar@{_(->}[ld]	\\
			& \Theta_K \ar@{^(->}[ru]_{\pi \neq 0}	&	& \Theta_{\kappa} \ar@{_(->}[lu]^{\pi = 0}
}
\end{split}
\end{equation}
where the maps to the left are open immersions and to the right are closed immersions.

\begin{defn}[{$\Theta$-reductivity}]\label{d-thetared}
A stack $\cX$  over $\bk$  is \emph{$\Theta$-reductive} if for any DVR $R$ and any  diagram
\begin{equation} \label{E:Theta-reductive}
\begin{split}
\xymatrix{
\Theta_R  \setminus 0 \ar[r] \ar[d]					& \cX  \\
\Theta_R \ar@{-->}[ur]		&
}
\end{split} \end{equation}
there exists a unique dotted arrow filling in the diagram.

Moreover, if $R$ is a DVR, we say that $\cX$ \emph{satisfies the valuative criterion for $\Theta$-reductivity for $R$} if any diagram \eqref{E:Theta-reductive} can be uniquely filled in.
\end{defn}

\begin{rem} This definition was introduced in \cite{Hal14}.  As with S-completeness, if $\cX$ is an Artin stack with affine diagonal, then any lift is automatically unique. 
 \end{rem}
 
 \subsubsection{The existence of good moduli spaces} The following criterion is established in \cite{AHH18}.

\begin{thm}\cite[Thm.~A]{AHH18} \label{T:existence}
Let $\cX$ be an Artin stack of finite type with affine diagonal over $\bk$.  Then $\cX$ admits a good moduli space $\cX \to X$ with $X$ separated if and only if $\cX$ is S-complete and $\Theta$-reductive.
\end{thm}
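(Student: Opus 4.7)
The plan is to establish both directions of this biconditional. The forward implication---that existence of a separated good moduli space implies both S-completeness and $\Theta$-reductivity---should be essentially formal: given a good moduli $\phi \co \cX \to X$ with $X$ separated and a map $\overline{\ST}_R \setminus 0 \to \cX$, one pushes down via $\phi$, uses separatedness of $X$ to glue the resulting two $R$-points (which agree over $K$) into a single map $\Spec R \to X$, composes with the good moduli map $\overline{\ST}_R \to \Spec R$ (note that $R = k[st]$-invariants of $R[s,t]/(st-\pi)$ under the $\bG_m$-action), and lifts back through the 2-pullback $\cX \times_X \overline{\ST}_R$, whose structure as a good moduli morphism over the regular two-dimensional base $\overline{\ST}_R$ allows extension of the section across the unique closed point. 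The same argument with $\Theta_R$ in place of $\overline{\ST}_R$ yields $\Theta$-reductivity.

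The reverse implication is the substantive direction, and my plan proceeds in three stages. \emph{(i) Reductive stabilizers:} By S-completeness applied at each closed point $x \in \cX$ together with Remark \ref{R:reductive}, the stabilizer $G_x := \Aut(x)$ is reductive. \emph{(ii) Local structure:} Invoke a Luna-type étale slice theorem for Artin stacks with reductive stabilizers to produce, for each closed $x$, an étale, stabilizer-preserving neighborhood $f_x \co [\Spec A_x / G_x] \to \cX$, on which the affine GIT quotient $[\Spec A_x/G_x] \to \Spec A_x^{G_x}$ serves as a local good moduli morphism. \emph{(iii) Gluing:} The overlaps of slices at distinct closed points yield an étale groupoid presentation $\cR \rightrightarrows U$ with $U = \bigsqcup_x \Spec A_x^{G_x}$; the goal is to show this is a separated étale equivalence relation whose algebraic space quotient $X$ fits into a good moduli morphism $\cX \to X$.

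The main obstacle is stage (iii), and it is precisely here that the two hypotheses play their decisive roles. $\Theta$-reductivity controls one-parameter limits: a map $\Theta_R \setminus 0 \to \cX$ encodes a family over $\Spec R$ together with a compatible cocharacter degeneration on the generic fiber, and the guaranteed extension forces closed-orbit limits to exist on the special fiber as well---ensuring that the descended groupoid relation on $U$ is closed in $U \times U$. S-completeness directly implements the valuative criterion of separatedness for $X$: two $R$-points of the candidate $X$ that agree over $K$ pull back through the étale charts to a map $\overline{\ST}_R \setminus 0 \to \cX$, and its unique extension to $\overline{\ST}_R$ forces the two $R$-points to coincide. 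The remaining technical content---verifying that the resulting quotient is genuinely an algebraic space of finite type and that the induced $\cX \to X$ satisfies the defining axioms of a good moduli morphism---should follow by combining Artin's criteria with the deformation theory inherited from $\cX$ and faithfully flat descent along the local good moduli maps obtained in stage (ii).
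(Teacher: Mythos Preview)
The paper does not prove this theorem; it is cited from \cite{AHH18}. What the paper does provide is an extended remark (immediately following Remark~\ref{R:existence}) explaining how S-completeness and $\Theta$-reductivity imply the hypotheses of an earlier existence criterion \cite[Thm.~1.2]{AFS17}. Your proposal is broadly aligned with that sketch---reductive stabilizers from S-completeness, local quotient presentations from the Luna \'etale slice theorem \cite{AHR15}, and then a gluing step---but the way you distribute the work between the two hypotheses differs from the paper's account, and this matters.

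In the paper's sketch, the slice theorem produces an \'etale morphism $f\colon(\cW,w)\to(\cX,x)$ with $\cW\cong[\Spec A/G_x]$, but \emph{not} a priori one that is stabilizer-preserving or that sends closed points to closed points. These two properties are exactly what is needed for the local good moduli spaces to glue, and they are where the hypotheses enter: S-completeness is used to show that, after shrinking, $f$ induces isomorphisms of stabilizer groups at closed points (condition (1a)), via an argument identifying $\Aut_{\cW}(\xi_K)$ with maps $\overline{\ST}_R\to\cW$ restricting suitably; $\Theta$-reductivity is used to show that $f$ sends closed points to closed points (condition (1b)), by extending a $\Theta_K$-degeneration in $\cX$ across the special fiber and lifting back through $f$. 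Both hypotheses together then yield condition (2), that each orbit closure admits a good moduli space, via a Zariski-main-theorem argument. Your proposal folds (1a) silently into stage~(ii) (``stabilizer-preserving'') without indicating that this requires S-completeness, and your description of the role of $\Theta$-reductivity (``the descended groupoid relation on $U$ is closed'') and of S-completeness (``valuative criterion of separatedness for $X$'') does not match how the paper actually deploys them. The separatedness of $X$ is a consequence once the good moduli space exists; the real work is in making the \'etale charts compatible enough to glue.
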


 \begin{rem} \label{R:existence}
 The following technical refinement of Theorem \ref{T:existence} will be useful to us as we are unable to verify the valuative criteria for S-completeness and $\Theta$-reductivity for \emph{every} DVR $R$ (see Definitions \ref{d-scomplete} and \ref{d-thetared}). 
To show the existence of a good moduli space $\cX \to X$ with $X$ separated, it suffices to verify the valuative criteria for S-completeness and $\Theta$-reductivity for DVRs $R$ essentially of finite type over $\bk$ (\cite[Rmk.~5.5]{AHH18}). 
Once this is established, it follows in fact (from applying the converse of Theorem \ref{T:existence}) that $\cX$ satisfies the valuative criteria for S-completeness and $\Theta$-reductivity for all DVRs $R$.
  \end{rem}

\begin{rem}[Comparing with an earlier criterion] 
In \cite{LWX14}, a variant of the above theorem (\cite[Thm.~1.2]{AFS17}) was used to construct a good moduli space of $\bQ$-Gorenstein smoothable, K-semistable Fano varieties.  Specifically, \cite[Thm.~1.2]{AFS17} states that if $\cX$ is an Artin stack of finite type with affine diagonal over $\bk$, then $\cX$ admits a good moduli space $\cX \to X$ if the following conditions hold: 
\begin{enumerate}
\item for every closed point $x \in \cX$, the stabilizer $G_x$ is reductive and there exists an \'etale morphism $f \co (\cW, w) \to (\cX,x)$ where $\cW \cong [\Spec(A)/G_x]$ such that 
\begin{enumerate}
	\item[(a)] $f$ induces an isomorphism of stabilizer groups at all closed points and
	\item[(b)] $f$ sends closed points to closed points, and
\end{enumerate}
\item for any $\bk$-point $y \in \cX$, the closure $\overline{ \{ y \}}$ admits a good moduli space.
\end{enumerate}
Vaguely speaking, condition (1a) ensures that the two projections $\cR:=\cW \times_{\cX} \cW \rightrightarrows \cW$ induce isomorphism of stabilizer groups while conditions (1b) and (2) ensure that the projections send closed points to closed points.  This is sufficient to imply that the two projections induce an \'etale equivalence relation $R \rightrightarrows W$ on good moduli spaces and that the algebraic space quotient $W/R$ is a good moduli space of $\cX$ Zariski-locally around $x$. 

We would like to explain the general idea of why the properties of S-completeness and $\Theta$-reductivity imply that the above conditions hold.  First, S-completeness implies that $G_x$ has a reductive stabilizer (Remark \ref{R:reductive}) and 
the existence of an \'etale morphism $f \co (\cW:=[\Spec(A)/G_x], w) \to (\cX,x)$ then follows from \cite[Thm.~1.2]{AHR15}.  

S-completeness  implies that after shrinking $\Spec(A)$, we may arrange that (1a) holds.  A complete argument is given in \cite[Prop.~4.4]{AHH18} but we explain here only how S-completeness implies that $f$ induces an isomorphism of stabilizer groups at any generization of $w$.  Let $\xi \co (\Spec(R), 0) \to (\cW, w)$ be a morphism from a complete DVR $R$ (with fraction field $K$). 
Then
$$\begin{aligned}
	\Aut_{\cW}(\xi_K) &\cong  \{\text{maps } g \co  \overline{\ST}_R \setminus 0 \to \cW \text{ and isomorphisms }  g|_{s \neq 0} \simeq \xi \simeq g|_{t \neq 0} \} \\
			& \cong \{\text{maps } g \co  \overline{\ST}_R \to \cW \text{ and isomorphisms }  g|_{s \neq 0} \simeq \xi \simeq g|_{t \neq 0} \}
\end{aligned}$$
where we have used S-completeness in the second line.  There is an analogous description of $\Aut_{\cX}( f(\xi_K) )$.  Since $f$ is \'etale and $R$ is complete, Tannaka duality implies that any map $(\overline{\ST}_R,0) \to (\cX,x)$ lifts uniquely to a map $(\overline{\ST}_R,0) \to (\cW,w)$.  It follows that $\Aut_{\cW}(\xi_K) \cong \Aut_{\cX}( f(\xi_K) )$.

Similarly, $\Theta$-reductivity implies that after shrinking $\Spec(A)$ further, we may arrange that (1b) holds.  A complete argument is given in \cite[Prop.~4.4]{AHH18} but we show here that if $\xi \in \cW$ is a generization of $w$ such that $\xi \in \cW_{K}$ is closed where $K = \overline{k(\xi)}$, then $\eta:=f(\xi) \in \cX_{K}$ is also closed.  Indeed, suppose $\eta \rightsquigarrow \eta_0$ is a specialization to a closed point in $\cX_{K}$; this can be realized by a map $\lambda \co \Theta_{K} \to \cX$. 
If $h \co \Spec(R) \to \cW$ is a map from a DVR with fraction field $K$ realizing the specialization $\xi \rightsquigarrow w$, then $\lambda$ and $f \circ h$ glue to form a map $\Theta_R \setminus 0 \to \cX$ which can be extended (using $\Theta$-reductivity) to a map $(\Theta_R,0) \to (\cX,x)$, and this in turn (using \'etaleness of $f$ and completeness of $R$) lifts to a unique map $(\Theta_R,0) \to (\cW,w)$.  But since $\xi \in \cW_{K}$ is closed, the image of $\Theta_K \to \cW$ consists of a single point, and thus the same is true for the image of $\lambda$.  It follows that $f(\xi)=\eta_0 \in \cX_K$ is closed.

Finally, both the S-completeness and $\Theta$-reductivity imply that (2) holds.  Let $y_0 \in \cY:=\overline{ \{y \}}$ be a closed point and  
 $f \co  (\cW:=[\Spec(A)/G_{y_0}], w_0) \to (\cY,y_0)$ be an \'etale morphism in which we can arrange that $w_0$ is the unique preimage of $y_0$.
 By Zariski's main theorem, we may factor $f$ as the composition of a dense open immersion $\cW \hookrightarrow \tilde{\cW}$ and a finite morphism $\tilde{\cW} \to \cY$.  Note that $w_0 \in \tilde{\cW}$ is necessarily closed 
 and that any other closed point  in $\tilde{\cW}$ is a specialization of a $k$-point in $\cW$.
 As $\tilde{\cW}$ is also $\Theta$-reductive, any $k$-point has a unique specialization to a closed point.  It follows that $w_0$ is the unique closed point in $\tilde{\cW}$ and thus the complement $\tilde{\cW} \setminus \cW$ is empty.  This in turn implies that $f \co \cW \to \cY$ is finite \'etale of degree $1$ and thus an isomorphism.

\medskip

In \cite{LWX14},  using analytic results, a stronger result than (2) was obtained, and as a result, the good moduli space is a scheme instead of merely an algebraic space. 
\end{rem}

\begin{lem}\label{l-substack}
 Let $f \co \cX \to \cY$ be a finite type monomorphism of Artin stacks locally of finite type over $\bk$ such that for every geometric point $x \co \Spec(l) \to \cX$, the image under $\cX_l \to \cY_l$ of the closure $\overline{ \{ x \} } \subset \cX_l$ is closed in $\cY_l$.  If $\cY$ is $\Theta$-reductive (resp., S-complete) with respect to essentially of finite type DVRs, then so is $\cX$.
\end{lem}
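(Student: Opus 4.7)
The plan is to first show that $f \co \cX \to \cY$ is a closed immersion---stronger than what the lemma asks for---from which the inheritance of S-completeness and $\Theta$-reductivity is immediate.

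To prove $f$ is a closed immersion, I would verify the valuative criterion for properness. Given a DVR $R$ over $\bk$ with fraction field $K$ and a compatible pair $(\Spec K \to \cX,\ \Spec R \to \cY)$, I need to produce the (unique by the monomorphism property) lift $\Spec R \to \cX$. Let $x_\eta \in |\cX|$ and $y_0 \in |\cY|$ be the images of the generic and closed points. Applying the hypothesis to a geometric point $\bar x_\eta \co \Spec l \to \cX$ above $x_\eta$ yields that $f_l\bigl(\overline{\{\bar x_\eta\}}^{\cX_l}\bigr)$ is closed in $|\cY_l|$. Since $\cY_l \to \cY$ is faithfully flat, the specialization $f(x_\eta) \rightsquigarrow y_0$ lifts (by going-down) to a specialization $f_l(\bar x_\eta) \rightsquigarrow \bar y_0$ in $|\cY_l|$ with $\bar y_0$ above $y_0$; the closedness then provides $\bar x_0 \in \cX_l$ with $f_l(\bar x_0) = \bar y_0$, and projecting gives $x_0 \in |\cX|$ with $f(x_0) = y_0$. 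To promote this pointwise lift to a morphism, I would consider $W := \cX \times_\cY \Spec R$, a finite-type monomorphism of algebraic spaces into $\Spec R$ containing both points of $\Spec R$. A finite-type monomorphism of Noetherian algebraic spaces is a locally closed immersion (Zariski's main theorem in Knutson's form), and the only such into the reduced, two-point scheme $\Spec R$ containing both points is the identity; hence $W = \Spec R$, giving the lift. Thus $f$ is proper; combined with being a monomorphism, $f$ is a closed immersion.

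The lemma now follows easily. Given a diagram $\overline{\ST}_R \setminus 0 \to \cX$ (resp.\ $\Theta_R \setminus 0 \to \cX$) with $R$ essentially of finite type, I would compose with $f$ and extend via the corresponding hypothesis on $\cY$ to $g \co \overline{\ST}_R \to \cY$ (resp.\ $g \co \Theta_R \to \cY$). Both $\overline{\ST}_R$ and $\Theta_R$ are reduced (their affine atlases $\Spec R[s,t]/(st - \pi)$ and $\Spec R[x]$ are reduced), and the complements of the closed points are dense open. Therefore the closed condition that $g$ factors through the closed immersion $f$, being satisfied on a dense open of a reduced source, holds globally, producing the desired lift.

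The main obstacle is the valuative-criterion step. The hypothesis provides closedness of images in $|\cY_l|$ rather than $|\cY|$, so one must carefully track the base change along $\cY_l \to \cY$ to descend the geometric lift $\bar x_0 \in \cX_l$ to a point of $|\cX|$ above $y_0$. The secondary subtlety is invoking the rigidity of finite-type monomorphisms of Noetherian algebraic spaces to promote the pointwise lift to a morphism.
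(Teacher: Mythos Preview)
Your proof has a genuine gap at the step where you conclude $W = \cX \times_\cY \Spec R$ equals $\Spec R$. The claim ``a finite-type monomorphism of Noetherian algebraic spaces is a locally closed immersion'' is false: for a DVR $R$ with fraction field $K$ and residue field $\kappa$, the map $\Spec K \sqcup \Spec \kappa \to \Spec R$ is a finite type monomorphism (each factor is an immersion with disjoint image) that is surjective but is \emph{not} an immersion and not an isomorphism. So from ``$W \to \Spec R$ is a surjective finite type monomorphism'' you cannot conclude $W = \Spec R$; the disconnected possibility $W \cong \Spec K \sqcup \Spec \kappa$ is not ruled out by your argument. Knowing that the images $x_\eta, x_0 \in |\cX|$ satisfy $x_0 \in \overline{\{x_\eta\}}$ does not by itself force the corresponding points of $W$ to lie on a common irreducible component.

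The paper proceeds differently and avoids this issue. It applies Zariski's main theorem directly to $f$, factoring it as an open immersion $\cX \hookrightarrow \tilde\cX$ followed by a finite morphism $\tilde\cX \to \cY$. Since $\Theta$-reductivity and S-completeness are inherited along finite morphisms (citing \cite[Prop.~3.20(1)]{AHH18}), one reduces to the case where $f$ is an \emph{open} immersion. Then, given an extension $\tilde h \co \Theta_R \to \cY$ of $h \co \Theta_R \setminus 0 \to \cX$, one only needs to check that $\tilde h(0) \in \cX$; for an open immersion this immediately implies $\tilde h$ factors through $\cX$. The closure hypothesis is used exactly once, to place $\tilde h(0)$ inside $\cX$. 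Your closure-of-$f$-is-closed-immersion route is in fact salvageable (the hypothesis forces $\cX$ to be clopen in $\tilde\cX$, since an open substack stable under specialization in a locally Noetherian stack is closed), but that requires precisely the ZMT factorization the paper uses, not the incorrect ``mono $\Rightarrow$ locally closed immersion'' shortcut.
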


\begin{proof}
Zariski's main theorem implies that there is a factorization $f \co \cX \hookrightarrow \tilde{\cX} \to \cY$ where $\cX \hookrightarrow \tilde{\cX}$ is an open immersion and $\tilde{\cX} \to \cY$ is finite.  By \cite[Prop.~3.20(1)]{AHH18},  $\tilde{\cX}$ is also $\Theta$-reductive with respect to essentially of finite type DVRs, so may assume that $f$ is an open immersion.  Consider an essentially of finite type DVR $R$ with residue field $l = R/\pi$ and a morphism $h \co \Theta_R \setminus 0 \to \cX$.  Since $\cY$ is $\Theta$-reductive, $h$ extends to a diagram

$$\xymatrix{
\Spec(l) \ar[r]^{\pi=0} \ar@{^(->}[d]		& \Theta_R \setminus 0 \ar[r]^h \ar@{^(->}[d]				& \cX \ar@{^(->}[d]^f \\
\Theta_l \ar[r]^{\pi=0}		& \Theta_R \ar[r]^{\tilde{h}}						& \cY.
}$$
In particular, if $x$ denotes the composition $\Spec(l) \to \Theta_R \setminus 0 \to \cX$, we have a specialization $x \rightsquigarrow \tilde{h}(0)$ in $\cY_l$.  The hypotheses imply that $\tilde{h}(0) \in \cX_l$ so that $\tilde{h}$ factors though $\cX$.  The argument for S-completeness is analogous. 
\end{proof}

\subsection{Log Fano pairs and K-stability} 
In this section, we introduce some basic notions concerning log Fano pairs and K-stability.
For further background information, see \cite[Sect. 2]{BX18} and the references therein. 
\medskip

A \emph{pair} $(X,D)$ is composed of a normal variety $X$ and an effective $\Q$-divisor $D$
on $X$ such that $K_X+D$ is $\mathbb{Q}$-Cartier. 
See \cite[2.34]{KM98} for the definitions of \emph{klt}, \emph{plt}, and \emph{lc} pairs.
A pair $(X,D)$ is \emph{log Fano} if $X$ is projective, $(X,D)$ is klt, and $-K_X-D$ is ample. 
A variety $X$ is {\it $\mathbb{Q}$-Fano} if $(X,0)$ is log Fano.

\subsubsection{Families of log Fano pairs}

\begin{defn}
Let $T$ be a normal scheme.
A {\it $\mathbb{Q}$-Gorenstein family of log Fano pairs} $(X,D)\to T$ 
is composed of a flat projective morphism between normal schemes $X \to T$  and a $\Q$-divisor $D$ on $X$ satisfying:
\begin{enumerate}
\item $\Supp(D)$ does not contain any fiber,
\item $K_{X/T}+D$ is $\bQ$-Cartier, and
\item  $(X_{\overline{t}}, D_{\overline{t}} )$ is a log Fano pair for all $t\in T$. 
\end{enumerate}
In (3), $D_{\overline{t}}$ denotes the \emph{divisorial pullback} of $D$. 
More generally, if $S\to T$ is a morphism of normal schemes, we set $X_S := X\times_T S$ and  write $D_S$ for the $\Q$-divisor on $X_S$ associated to ${\rm Cycle}(D\times_T S)$.
\end{defn}

A \emph{special test configuration} of a log Fano pair $(X,D)$ is the data of a $\mathbb{G}_m$-equivariant $\Q$-Gorenstein family of log Fano pairs $(\cX,\cD) \to \mathbb{A}^1$ with an isomorphism $(\cX_1,\cD_1) \simeq (X,D)$ for $\{1\}\to \mathbb{A}^1$. 

\subsubsection{K-stability}
Let $(X,D)$ be an $n$-dimensional log Fano pair. A \emph{divisor over} $X$ is  a prime divisor $E$ on a normal variety $Y$ with a proper  birational morphism $\mu:Y \to X$.
Following \cite{Fuj18}, we  set 
$$
\beta_{X,D}(E)=(-K_X-D)^n A_{X,D}(E) - \int_0^\infty  \vol(\mu^*(-K_X-D)- t E)\, dt,$$
where $A_{X,D}(E) : = 1+ {\rm coeff}_E(K_{Y} - \mu^*(K_X+D))$ is the log discrepancy.

\begin{defn}\label{d-kstable}
A log Fano pair $(X,D)$ is
\begin{enumerate}
\item  \emph{K-semistable} if $\beta_{X,D}(E)\ge 0$ for all divisors  $E$ over $X$;
\item \emph{K-stable} if $\beta_{X,D}(E)> 0$ for all divisors  $E$ over $X$;
\item \emph{K-polystable} if it is K-semistable and  for any special 
test configuration of ${(\cX,\cD) \to \mathbb{A}^1}$ of $(X,D)$ with $(\cX_0, \cD_0)$ 
K-semistable there is an isomorphism of $\Q$-Gorenstein families of log Fano pairs   $ (\cX, \cD) \simeq (X_{\mathbb{A}^1}, D_{\mathbb{A}^1}):=(X,D)\times\mathbb{A}^1$.
 \end{enumerate}
 \end{defn}

The equivalence of the above definition with the original definitions in \cite{Tia97, Don02} was proven in \cite{Fuj16,Li17, LWX18, BX18}.

\medskip
Though the above notions of stability make sense for log Fano pairs over characteristic zero 
fields that are not algebraically closed, we will not use them due to the following issue:
Let $(X_K,D_K)$ be a log Fano pair over a characteristic zero field $K$
and $K'/K$ a field extension. 
While it is expected that  $(X_K,D_K)$ is K-semistable if and only if $(X_{K'},D_{K'})$ is K-semistable,  the result is only known when both $K$ and $K'$ are algebraically closed 
(for example, see \cite[Cor. 15]{BL18}). \footnote{Since the first version of the current paper was written, this expectation was proved in \cite{Zhu20}.}

The following result proved in \cite{LWX18} will be needed in various places.

\begin{lem}[{\cite{LWX18}*{Lem. 3.1}}]\label{l-sequivalent}
Let $(\cX, \cD)$ be a special test configuration of a K-semistable log Fano pair
$(X,D)$ with the central fiber denoted by $(X_0, D_0)$. If $\Fut(\cX,\cD) = 0$, then $(X_0, D_0)$ is K-semistable.
\end{lem}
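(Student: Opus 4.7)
The plan is to argue by contradiction. Suppose $(X_0, D_0)$ is not K-semistable, so by definition there exists a special test configuration $(\cY, \cE) \to \mathbb{A}^1$ of $(X_0, D_0)$ with $\Fut(\cY, \cE) < 0$. I would then combine $(\cX, \cD)$ and $(\cY, \cE)$ into a third special test configuration of $(X, D)$ whose Futaki invariant is negative, contradicting the K-semistability of $(X, D)$.

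The mechanism for the composition is a bi-filtered Rees algebra construction. Fix $r$ sufficiently divisible so that $L := -r(K_X + D)$ is Cartier, and let $R_\bullet := \bigoplus_{m \ge 0} H^0(X, mL)$. The test configuration $(\cX, \cD)$ corresponds to a $\mathbb{Z}$-filtration $F^\bullet R_\bullet$ with associated graded $\gr_F R_\bullet$ equal to the anti-canonical section ring $S_\bullet$ of $(X_0, D_0)$; in turn $(\cY, \cE)$ corresponds to a $\mathbb{Z}$-filtration $G^\bullet S_\bullet$. After choosing a $\mathbb{G}_m$-equivariant splitting to lift $G^\bullet$ to a compatible filtration $\widetilde{G}^\bullet$ on $R_\bullet$, I would form the convolution filtration $H^n R_\bullet := \sum_{a+b=n} F^a R_\bullet \cap \widetilde{G}^b R_\bullet$, and let $(\cZ, \cF) \to \mathbb{A}^1$ be the test configuration obtained as $\Proj$ of the associated Rees algebra. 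A direct computation with iterated associated gradeds, $\gr_H R_\bullet \cong \gr_G \gr_F R_\bullet$, shows that the central fiber of $(\cZ, \cF)$ is $(Y_0, E_0)$, the central fiber of $(\cY, \cE)$. Since $(Y_0, E_0)$ is klt log Fano by hypothesis, $(\cZ, \cF)$ is then a special test configuration of $(X, D)$.

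The main obstacle is verifying the additivity formula $\Fut(\cZ, \cF) = \Fut(\cX, \cD) + \Fut(\cY, \cE)$, which combined with the hypotheses immediately gives $\Fut(\cZ, \cF) < 0$ and finishes the argument. The cleanest route is to package the two filtrations into a $\mathbb{G}_m^2$-equivariant $\mathbb{Q}$-Gorenstein family $\cW \to \mathbb{A}^2$, namely the $\Proj$ of the bi-Rees algebra $\bigoplus_{a,b} F^a R_\bullet \cap \widetilde{G}^b R_\bullet$, whose restrictions to the two coordinate axes recover $(\cX, \cD)$ and a family with the same central fiber as $(\cY, \cE)$, and such that $(\cZ, \cF)$ is the pullback of $\cW$ along the diagonal $\mathbb{A}^1 \hookrightarrow \mathbb{A}^2$. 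The intersection-theoretic definition of the generalized Futaki invariant (via the CM line bundle on the compactification to $\mathbb{P}^1 \times \mathbb{P}^1$) is bilinear on the cocharacter lattice of $\mathbb{G}_m^2$, so evaluating on the diagonal 1-parameter subgroup yields the claimed additivity. A secondary technical point is confirming that the bi-Rees construction is $\mathbb{Q}$-Gorenstein with klt fibers; this follows from the fact that $(\cX, \cD)$ and $(\cY, \cE)$ are individually $\mathbb{Q}$-Gorenstein families of log Fano pairs together with the openness/deformation invariance of the klt log Fano condition under the specified compatible filtrations. If the intersection-theoretic bookkeeping proves cumbersome, an alternative is to weight the convolution as $H^n := \sum_{a + Nb = n} F^a \cap \widetilde{G}^b$ for large $N$ and use a 1-PS of the form $t \mapsto (t, t^N)$, giving $\Fut = \Fut(\cX, \cD) + N \cdot \Fut(\cY, \cE)$, which is still negative for any $N \geq 1$.
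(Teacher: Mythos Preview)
Your overall strategy---destabilize the central fiber, then compose the two degenerations to destabilize $(X,D)$---is exactly the one used in \cite{LWX18}*{Lem.~3.1}, which the paper cites without reproving. The serious gap is the step where you ``lift $G^\bullet$ to a compatible filtration $\widetilde{G}^\bullet$ on $R_\bullet$.'' There is no $\mathbb{G}_m$-action on $R_\bullet = \bigoplus_m H^0(X, mL)$ (the action lives on the total space $\cX$, not on the general fiber $X$), so the phrase ``$\mathbb{G}_m$-equivariant splitting'' has no clear meaning here. A vector-space splitting of $F^\bullet$ does give a linear isomorphism $R_m \cong S_m$, but the transported filtration $\widetilde{G}^\bullet$ is not multiplicative in general: the product $R_{m} \times R_{m'} \to R_{m+m'}$ respects the filtration $F$, not the chosen splitting. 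Without multiplicativity your bi-Rees object is not a graded subalgebra, its $\Proj$ is not a test configuration, and the identification $\gr_H R_\bullet \cong \gr_G \gr_F R_\bullet$ need not hold.

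The actual content of the argument in \cite{LWX18} at this point is to first arrange that $(\cY,\cE)$ is \emph{equivariant} for the $\mathbb{G}_m$-action on $(X_0,D_0)$ induced by $(\cX,\cD)$. This is not free: one invokes \cite{LX14} to replace an arbitrary destabilizing special test configuration of $(X_0,D_0)$ by a $\mathbb{G}_m$-equivariant one with Futaki invariant no larger. Once $(\cY,\cE)$ is equivariant, the two $\mathbb{G}_m$-actions commute and one can build a genuine $\mathbb{G}_m^2$-equivariant $\Q$-Gorenstein family of log Fano pairs over $\mathbb{A}^2$; restricting to a diagonal one-parameter subgroup then gives the desired special test configuration of $(X,D)$. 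A minor terminological point: the Futaki invariant is \emph{linear}, not bilinear, on the cocharacter lattice of the acting torus, and linearity is exactly what yields the additivity along the diagonal.
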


\subsection{Flat families of polarized schemes over a surface}
We will be considering $S$-completeness and $\Theta$-reductivity of stacks parameterizing polarized varieties. Both conditions are formulated in terms of the existence of extensions of equivariant flat families of polarized varieties over punctured regular surfaces.

We thus consider a regular noetherian 2-dimensional scheme $S$, and a closed point $0 \in S$. Let $j : S \setminus 0 \to S$ be the open immersion. The key fact that we will use is that for any finite rank locally free sheaf $E$ on $S\setminus 0$, $j_\ast(E)$ is locally free as well. $j_\ast(E)$ is coherent because $S$ is normal and $0$ has codimension $2$, and the reflexive sheaf $j_\ast(E)$ is locally free because any reflexive sheaf on a regular $2$-dimensional scheme is locally free \cite{hartshorne}*{Cor.~1.4}. More precisely, $j_\ast$ induces an equivalence between the categories of locally free (and more generally, flat quasi-coherent) sheaves on $S \setminus 0$ and on $S$ locally free (respectively, flat quasi-coherent) sheaves on $S$, with inverse given by restriction.

\begin{lem} \label{L:filling_conditions}
Let $q : \cX \to S \setminus 0$ be a flat projective morphism of schemes, and let $\cL$ be a relatively ample line bundle on $\cX$. Then the following are equivalent:
\begin{enumerate}
\item there exists an extension of $q$ to a flat projective family $\widetilde{\cX} \to S$ with an ample $\bQ$-line bundle $\tilde{\cL}$ extending $\cL$;
\item the algebra $\bigoplus_{m \geq 0} j_\ast (q_\ast( \cO_\cX(m \cL)))$ is finitely generated as an $\cO_S$-algebra; and
\item the restriction $\bigoplus_{m \geq 0} j_\ast (q_\ast( \cO_\cX(m \cL)))|_{0}$ is finitely generated as a $\kappa(0)$-algebra.
\end{enumerate}
If these conditions hold, then $$\widetilde{\cX} = {\rm Proj}_S\left(\bigoplus_{m \geq 0} j_\ast \big(q_\ast \cO_\cX(m \cL)\big)\right)$$ is the unique extension, with the polarization $\cO_{\widetilde{\cX}}(1)$. If $\cX$ is equivariant for an action of $\bG_m$ on $S$, then so is $\tilde{\cX}$.
\end{lem}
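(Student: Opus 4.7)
\emph{Proof plan.} The plan is to exploit the key fact noted above: since $S$ is regular of dimension $2$, $j_*$ gives an equivalence between finite-rank locally free sheaves on $S\setminus 0$ and on $S$, with inverse given by restriction.

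For $(1)\Rightarrow (2)$ and the explicit $\Proj$ formula: given an extension $\widetilde{\cX}\to S$ with ample $\widetilde{\cL}$, for $m$ sufficiently divisible the pushforward $\widetilde{q}_*\cO_{\widetilde{\cX}}(m\widetilde{\cL})$ is locally free on $S$ and restricts to $q_*\cO_\cX(m\cL)$ on $S\setminus 0$; by the equivalence it coincides with $A_m := j_*(q_*\cO_\cX(m\cL))$. Ampleness of $\widetilde{\cL}$ then yields finite generation of $\bigoplus_m A_m$, the formula $\widetilde{\cX}=\Proj_S(\bigoplus_m A_m)$, and uniqueness of the extension. The implication $(2)\Rightarrow (3)$ is immediate once one observes that each $A_m$ is $\cO_S$-flat: indeed, for $m\gg 0$, $q_*\cO_\cX(m\cL)$ is locally free on $S\setminus 0$ by relative Serre vanishing, so $A_m$ is locally free on $S$ by the equivalence.

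The main content is $(3)\Rightarrow (2)$. Setting $R = \cO_{S,0}$ with maximal ideal $\fm$ and residue field $\kappa$, after passing to a Veronese subalgebra I may assume $A|_0$ is generated in a single degree $m_0$ with $\cL^{m_0}$ relatively very ample on $\cX/(S\setminus 0)$. Lift a $\kappa$-basis of $(A|_0)_{m_0}$ to elements $a_1,\ldots,a_N\in (A_{m_0})_R$. A graded Nakayama argument applied piecewise---each $A_m$ is a finitely generated $\cO_S$-module and the quotient $(A_R/R[a_i])\otimes_R\kappa$ vanishes in every degree---implies that the $a_i$ generate $A_R$ as an $R$-algebra. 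Spreading the $a_i$ to sections over a neighborhood $U$ of $0$, they form a basis of the locally free sheaf $A_{m_0}|_U$ after shrinking $U$, and hence induce a closed embedding $\cX|_{U\setminus 0}\hookrightarrow \bP^{N-1}\times U$. I take $\widetilde{\cX}_U$ to be the scheme-theoretic closure: its base change to $\Spec R$ is $\Proj_R(A_R)$, which is flat over $R$ because each $(A_m)_R$ is locally free, so properness of $\widetilde{\cX}_U\to U$ spreads this flatness to a neighborhood of $0$. Pushing forward powers of the tautological polarization on $\widetilde{\cX}_U$ recovers $A|_U$ in large degrees, which gives $(2)$ and identifies $\widetilde{\cX}=\Proj_S(\bigoplus_m A_m)$ after gluing with $\cX$ off $0$.

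The main obstacle, as signaled, is the spreading step: stalk-level finite generation of $A_R$ over $\cO_{S,0}$ does not obviously upgrade to finite generation over an open neighborhood, because $A/R[a_i]$ has infinitely many graded pieces, each vanishing only in a possibly smaller neighborhood of $0$. Concentrating generators in a single sufficiently divisible degree and passing through scheme-theoretic image bypasses this infinitary issue; flatness at $0$ then follows from the $\Proj$ description together with $R$-flatness of each $(A_m)_R$. Finally, $\bG_m$-equivariance is preserved throughout because the construction $A\mapsto\Proj_S(A)$ with $\cO(1)$ is manifestly functorial.
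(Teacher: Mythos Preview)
Your approach is correct but takes a more circuitous route than the paper for $(3)\Rightarrow(2)$. The paper's argument avoids your spreading obstacle entirely by exploiting one extra piece of information you do not use: the algebra $A=\bigoplus_m j_*(q_*\cO_\cX(m\cL))$ is \emph{already} finitely generated after restriction to $S\setminus 0$, because there it is the section ring of the projective family $\cX\to S\setminus 0$. Working globally on affine $S$, the paper lifts a finite homogeneous generating set of $A|_0$ to $A$ and separately lifts a finite homogeneous generating set of $A|_{S\setminus 0}$ to $A$; together these define a graded map $\phi:\cO_S[x_1,\dots,x_N]\to A$ which is surjective both at $0$ and on $S\setminus 0$. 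Since each graded piece $A_m$ is a coherent $\cO_S$-module, Nakayama's lemma applied degree by degree forces $\phi$ to be surjective on all of $S$. That is the whole argument.

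By contrast, you localize to $R=\cO_{S,0}$, prove finite generation over $R$ by Nakayama (using only the generators at $0$), and then must spread back to a neighborhood via scheme-theoretic closure and a flatness-spreading argument. This works, but the identification of the scheme-theoretic closure over $\Spec R$ with $\Proj_R(A_R)$ deserves a word of justification (it holds because each $(A_m)_R$ is free over the domain $R$, so $\Proj_R(A_R)$ has no sections supported over $\fm$). The paper's trick of throwing in the generators from $S\setminus 0$ makes the infinitary spreading issue you flag simply disappear, at no extra cost.
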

\begin{proof}
$(1) \Leftrightarrow (2)$: Note that $q_\ast(\cO_{\cX}(m\cL))$ is locally free on $S$ for $m \gg 0$ because $q$ is flat. It follows that $\tilde{\cX} = {\rm Proj}_S (\bigoplus_m j_\ast(q_\ast\cO_{\cX}(m\cL)) )$ is a flat extension of $\cX$ if this algebra is finitely generated, and conversely for any flat extension $\Gamma(\widetilde{\cX},\cO_{\widetilde{\cX}}(m\widetilde{\cL})) = j_\ast(q_\ast(\cO_{\cX}(m\cL)))$ for $m\gg 0$.

$(3) \Leftrightarrow (2)$: Note that $(2) \Rightarrow (3)$ automatically, and finite generation is local over $S$ by definition, so we may assume $S$ is affine. Then we may lift a finite homogeneous set of generators of $\bigoplus_{m \geq 0} j_\ast(q_\ast(\cO_{\cX}(m\cL))) \otimes_{\cO_S} \kappa(0)$ to $\bigoplus_{m \geq 0} j_\ast(q_\ast(\cO_{\cX}(m\cL)))$, and by assumption we may find homogeneous elements in the latter which generate the algebra $\bigoplus_{m \geq 0} q_\ast(\cO_{\cX}(m\cL))$ after restriction to $S \setminus 0$. Together these define a map of graded $\cO_S$-algebras $\phi : \cO_S[x_1,\ldots,x_N] \to \bigoplus_{m \geq 0} j_\ast(q_\ast(\cO_{\cX}(m\cL)))$, where the degree of the generators $x_i$ vary but are all $\geq 0$. $\phi$ is surjective after restriction to $\kappa(0)$ and $S\setminus 0$, so because the graded pieces of both algebras are finite $\cO_S$-modules, Nakayama's lemma implies that $\phi$ is surjective.

Note that if $\widetilde{\cX}$ is equivariant for a $\bG_m$-action on $S$, then $\bigoplus_m j_\ast(q_\ast(\cO_{\cX}(m \cL)))$ has an additional grading coming from the $\bG_m$-action, and this grading induces a $\bG_m$-action on $\widetilde{\cX}$ extending the one on $\cX$.
\end{proof}

\section{S-completeness}\label{s-Scomplete}

In this section, we will prove that the moduli of K-semistable log Fano pairs is $S$-complete (Theorem \ref{t-scomplete}).
 We first study S-completeness for quasi-coherent sheaves in Section \ref{ss-scoherent} and then S-completeness of polarized varieties in Section \ref{ss-spolar}. Applying this to the direct sum of the pushforwards of  the $m$-th tensor product of the polarization for a family of polarized varieties, this naturally leads to a finite generation condition on the graded algebra (see Condition \ref{a-finitegenerate}). In Section \ref{ss-sfano}, we confirm this condition for K-semistable log Fano pairs.

\subsection{$S$-completeness for coherent sheaves}\label{ss-scoherent}  In this subsection, we establish S-completeness for the stack parameterizing coherent sheaves on $\Spec(k)$ or, in other words, that every flat and coherent sheaf on $\overline{\ST}_R \setminus 0$ extends uniquely to a flat and coherent sheaf on $\overline{\ST}_R$.

We begin by discussing the correspondence between flat coherent sheaves on $\Theta_k$ and filtrations.  A quasi-coherent sheaf $F$ on $\Theta_k = [\Spec(k[x])/\bG_m]$ corresponds to a $\bG_m$-equivariant quasi-coherent sheaf on $\Spec(k[x])$ or, in other words, a $\bZ$-graded $k[x]$-module $\bigoplus_{p \in \bZ} F_p$; this in turn corresponds to diagram of $k$-vector spaces:  $\cdots \to F_{p+1} \xrightarrow{x} F_p \xrightarrow{x} F_{p-1} \to \cdots$.  The restriction of $F$ along $\Spec(k) \xrightarrow{1} \Theta_k$ is $\colim(\cdots \to F_{p+1} \xrightarrow{x} F_p \to \cdots)$ and along $B_{k} \bG_m \xrightarrow{0} \Theta_k$ is the associated graded quasi-coherent sheaf $\bigoplus_p F_p / xF_{p+1}$.  Moreover, $F$ is flat and coherent over $\Theta_k$ if and only if each $F_p$ is a finite dimensional $k$-vector space,  the maps $x$ are injective, $F_p = 0$ for $p \gg 0$ and $x \co F_p \to F_{p-1}$ is an isomorphism for $p \ll 0$.
 
 Similarly, if $R$ is a DVR with fraction field $K$, residue field $\kappa$ and uniformizing parameter $\pi$, then a quasi-coherent sheaf $F$ on $\overline{\ST}_R =  [\Spec \big(R[s,t] / (st-\pi)\big)  / \bG_m]$ corresponds to a $\bG_m$-equivariant quasi-coherent sheaf on $\Spec(k[s,t]/(st-\pi))$ or, in other words, a
  $\bZ$-graded $R[s,t]/(st-\pi)$-module $\bigoplus_{p \in \bZ} F_p$; this in turn corresponds to a diagram of maps of $R$-modules 
$$
\xymatrix{\cdots \ar@/^/[r]^t & F_{p+1} \ar@/^/[r]^t \ar@/^/[l]^s & F_{p} \ar@/^/[r]^t \ar@/^/[l]^s & F_{p-1}  \ar@/^/[r]^t \ar@/^/[l]^s & \ar@/^/[l]^s \cdots },
$$
such that $st=ts=\pi$.  The reader may wish to refer to the schematic picture \eqref{E:ST-schematic}  of  $\overline{\ST}_R$.  The restriction of $F$ along
	\begin{itemize}
		\item  $\Spec(R) \xhookrightarrow{t \neq 0} \overline{\ST}_R$ is
		$\colim (\cdots \xrightarrow{t} F_{p} \xrightarrow{t} F_{p-1} \xrightarrow{t} \cdots)$,
		\item  $\Spec(R) \xhookrightarrow{s \neq 0} \overline{\ST}_R$ is
		$\colim (\cdots \xleftarrow{s} F_{p} \xleftarrow{s} F_{p-1} \xleftarrow{s} \cdots)$,
		\item  $\Theta_{\kappa} \xhookrightarrow{t = 0} \overline{\ST}_R$ is the object corresponding to the sequence 
		$$(\cdots \xleftarrow{s} F_p/tF_{p+1} \xleftarrow{s} F_{p-1}/tF_p \xleftarrow{s} \cdots),$$ 
		\item  $\Theta_{\kappa} \xhookrightarrow{s = 0} \overline{\ST}_R$ is $(\cdots \xrightarrow{t} F_{p+1}/sF_{p} \xrightarrow{t} F_{p}/sF_{p-1} \xrightarrow{t} \cdots)$, and
		\item  $B_{\kappa} \bG_{m} \xhookrightarrow{s=t = 0} \overline{\ST}_R$ is the $\bZ$-graded $\kappa$-module $\bigoplus_{p \in \bZ} F_p/(t F_{p+1} + s F_{p-1})$.
	\end{itemize}
The sheaf $F$ is flat and coherent over $\overline{\ST}_R$ if and only if each $F_p$ is flat and coherent over $R$, the maps $s$ and $t$ are injective, the induced maps $s \co F_{p-1}/tF_p \to F_p/tF_{p+1}$ are injective (or equivalently the maps $t \co F_{p+1}/sF_p \to F_p/sF_{p-1}$ are injective), $t \co F_{p} \to F_{p-1}$ is an isomorphism for $p \ll 0$ and $s \co F_{p-1} \to F_{p}$ is an isomorphism for $p \gg 0$. 
	
Let $j \co \overline{\ST}_R \setminus 0 \hookrightarrow \overline{\ST}_R$ be the open immersion.  We will show how to compute the pushforward of coherent sheaves under this open immersion.
Let $j_t, j_s \co \Spec(R) \to \overline{\ST}_R$ and $j_{st} \co \Spec(K) \to \overline{\ST}_R$ be the open immersions corresponding to $t \neq 0$, $s \neq 0$ and $st \neq 0$.  
Let $\cE$ be a flat coherent sheaf on $\overline{\ST}_R \setminus 0$; this corresponds to 	a pair of $R$-modules $E$ and $E'$ together with an isomorphism $\alpha \co E_{K} \to E'_{K}$.	Under $\alpha$, we may view both $E$ and $E'$ as submodules of $E_K$.  Then 
{$j_\ast \cE \cong  (j_t)_\ast E \cap (j_s)_\ast E' \subset (j_{st})_\ast E_K$.}
As morphisms of graded $R[s,t]/(st - \pi)$-modules, $j_t$ and $j_s$ correspond to the inclusions $R[s,t]/(st-\pi) \subset R[t]_t$ and $R[s,t]/(st-\pi) \subset R[s]_s$, and $j_{st}$ corresponds to $R[s,t]/(st-\pi) \subset K[t]_t$.  Recalling that $t$ has weight $-1$, we 
 compute that
	$$\begin{aligned}
	(j_{st})_\ast E_K &\cong E_K  \otimes_R R[t]_t  \cong \bigoplus_{p\in \bZ} E_K t^{-p}, \\
	(j_t)_\ast E & \cong E \otimes_R R[t]_t \cong \bigoplus_{p \in \bZ} E t^{-p}  \subset (j_{st})_\ast E_K,  \\
	(j_s)_\ast E' & \cong E' \otimes_R R[s]_s \cong \bigoplus_{p \in \bZ} (\pi^{p} \cdot E') t^{-p} \subset (j_{st})_\ast E_K
	\end{aligned}$$
	where in the last line we have used the identification $s = t^{-1} \pi$.  Finally, we compute that
\begin{equation} \label{E:push-forward}
j_\ast \cE \cong \bigoplus_{p \in \bZ} \big( E \cap (\pi^{p} \cdot E') \big) t^{-p} \subset \bigoplus_{p\in \bZ} E_K t^{-p}.
\end{equation}
If we define the filtration $\cG^pE = E \cap (\pi^{p} \cdot E')$, then $j_\ast \cE $ is the $\oh_{\overline{\ST}_R}$-module given by the diagram
  \begin{equation*}
\xymatrix{\cdots \ar@/^/[r]^t & \cG^{p+1}E \ar@/^/[r]^t \ar@/^/[l]^s & \cG^pE \ar@/^/[r]^t \ar@/^/[l]^s & \cG^{p-1}E  \ar@/^/[r]^t \ar@/^/[l]^s & \ar@/^/[l]^s \cdots },
\end{equation*}
 of $R$-modules where $t \co \cG^{p+1}E \to \cG^pE$ is inclusion and $s \co \cG^pE \to \cG^{p+1}E$ is multiplication by $\pi$.  Note that $j_* \cE$ is necessarily a flat and coherent $\oh_{\overline{\ST}_R}$-module, because non-equivariantly it is the pushforward of a vector bundle from the complement of a closed point in the regular surface $\Spec(R[s,t]/(st-\pi))$.

\subsection{S-completeness for polarized varieties}\label{ss-spolar}

Suppose $(X,L)$ and $(X',L')$ are flat families of polarized varieties over $\Spec(R)$ and $\alpha \co (X_K, L_K) \to (X'_K, L'_K)$ is an isomorphism.  Then $(X,L)$ and $(X',L')$ can be glued along the isomorphism $\alpha$ to a polarized family $(\cX, \cL) \to \overline{\ST}_R \setminus 0$.  This yields a diagram
$$\xymatrix{
\cX \ar[d]^{q}	\\
\overline{\ST}_R \setminus 0 \ar[r]^j	& \overline{\ST}_R \, .
}$$

Now we state our key condition:
\begin{condition}[Finite Generation Condition]\label{a-finitegenerate}
The  $\oh_{\overline{\ST}_R}$-algebra 
$\bigoplus_{m \ge 0} j_* q_* \oh_{\cX}(m\cL)$  is finitely generated.
\end{condition}
By Lemma \ref{L:filling_conditions}, this condition is equivalent to the existence of a flat extension of $\cX$ to a polarized family $(\widetilde{\cX}, \widetilde{\cL}) \to \overline{\ST}_R$, where
$$\tilde{\cX} := \uProj_{\, \overline{\ST}_R} \bigoplus_{m \ge 0} j_* q_* \oh_{\cX}(m\cL)$$
and $\tilde{\cL} = \oh_{\tilde{\cX}}(1)$.  To provide a more explicit description for this algebra, Equation \eqref{E:push-forward} implies that for each $m \ge 0$, 
$$\begin{aligned}
j_* q_*  \oh_{\cX}(m\cL) & \cong \bigoplus_{p \in \bZ} \big(H^0(X, \oh_X(mL)) \cap \pi^{p} H^0(X', \oh_{X'}(mL')) \big)t^{-p}  \\
	& \subset \bigoplus_{p \in \bZ} H^0(X_K, \oh_{X_K}(m L_K)) t^{-p}.
\end{aligned}$$

Define a filtration of $V_m := H^0(X, \oh_X(mL))$ by 
$$\cG^pV_m := H^0(X, \oh_X(mL)) \cap \pi^{p} H^0(X', \oh_{X'}(mL')),$$ which consists of sections in $V_m$ with at worst a pole of order $p$ along $X'_0$.  We have a diagram of $R$-modules
  \begin{equation*}
\xymatrix{\cdots \ar@/^/[r]^t & \cG^{p+1}V_{m} \ar@/^/[r]^t \ar@/^/[l]^s & \cG^pV_{m} \ar@/^/[r]^t \ar@/^/[l]^s & \cG^{p-1}V_{m}  \ar@/^/[r]^t \ar@/^/[l]^s & \ar@/^/[l]^s \cdots },
\end{equation*}
where $t \co \cG^{p+1}V_{m} \to \cG^pV_{m}$ is inclusion and $s \co \cG^pV_{m} \to \cG^{p+1}V_{m}$ is multiplication by $\pi$.  This gives the direct sum $\bigoplus_{p,m} \cG^pV_{m}$ the structure of a bigraded $R[s,t]/(st-\pi)$-algebra.  Assume the Finite Generation Condition \ref{a-finitegenerate} holds, then the grading in $m$ defines a projective scheme 
$$\cP = \uProj_{\Spec(R[s,t]/(st-\pi))} \bigoplus_{p,m} \cG^pV_{m}$$ and the grading in $p$ gives an action of $\bG_m$ on $\cP$ and a linearization of $\oh_{\cP}(1)$.  Observe that $(\tilde{\cX}, \tilde{\cL})= ([\cP / \bG_m], \oh_{\cP}(1))$.

\begin{example}
Let $(X, L)$ be a polarized $\kappa$-variety, and let $R = \kappa [\![t]\!]$ and $K=\kappa(\!(t)\!)$.  Let $(X_K, L_K) \to (X_K,L_K)$ be an automorphism induced from a one-parameter subgroup $\alpha \co \bG_m \to \Aut(X,L)$.  The above construction produces a flat family $(\tilde{\cX}, \tilde{\cL})$ over $\overline{\ST}_R$ which corresponds to the trivial flat family 
$$\big(X \times \Spec(R[s,t]/(st-\pi)), p_1^*L\big)$$ 
over $\Spec( R[s,t]/(st-\pi) )$ with the $\bG_m$-action given by $\alpha$ on the first factor.  Observe that if $\Aut(X,L)$ is reductive, then any $\alpha \in \Aut(X,L)(K)$ is in the same double coset as a one-parameter subgroup by the Iwahori decomposition, and it follows that any family over $\overline{\ST}_R \setminus 0$ obtained by gluing two trivial families over $\Spec(R)$ along an isomorphism $\alpha \in \Aut(X,L)(K)$ extends to a family over $\overline{\ST}_R$. On the other hand if $\Aut(X,L)$ is not reductive, such an extension need not exist.
\end{example}


\subsection{S-completeness for K-semistable log Fano pairs}\label{ss-sfano}
In this section, we will prove that Condition \ref{a-finitegenerate} holds 
for K-semistable log Fano pairs with anticanonical polarization (Theorem \ref{t-scomplete}).
This is obtained by showing that the filtration considered in \cite{BX18} is equivalent  
 to the filtration in Section \ref{ss-spolar} up to a grading shift.
Hence, we can invoke finite generation results proved in \cite{BX18}
 to verify  that Condition \ref{a-finitegenerate} is satisfied
 and then use a result in \cite{LWX18} (see Lemma \ref{l-sequivalent}) to show that the  corresponding special fiber 
of the flat extension over  $ \overline{\ST}_R$ is K-semistable. 

\medskip 
Let $R$ be a DVR essentially of finite type over $k$ with uniformizer $\pi$, 
fraction field $K$, 
and residue field $\kappa$. 
Let
\[
(X,D) \to \Spec(R) 
\quad \text{ and } \quad
(X',D') \to \Spec(R)
\]
be $\Q$-Gorenstein families of log Fano pairs
and assume there is a birational map $\alpha \colon X \dashrightarrow X'$
that induces an isomorphism $(X_K, D_K) \to (X'_K,D'_K)$.  
Following Section \ref{ss-spolar}, 
the above data gives a $\mathbb{G}_m$-equivariant
 $\Q$-Gorenstein family of log Fano pairs 
\begin{equation}\label{e-punctured}
(\cX , \cD )
 \to 
 \Spec \left(R[s,t]/(st-\pi) \right) \setminus 0,  
\end{equation}
where $0 \in  \Spec \left(R[s,t]/(st-\pi) \right)$ is the closed point defined by the vanishing of $(s,t)$. 

\begin{thm}\label{t-scomplete}
If $(X_{\overline{\kappa}}, D_{\overline{\kappa}})$ and $(X'_{ \overline{\kappa}}, D'_{\overline{\kappa}})$ are K-semistable, 
then the map in \eqref{e-punctured} extends uniquely to a $\mathbb{G}_m$-equivariant $\Q$-Gorenstein  family of log Fano pairs 
\[
(\tilde{\cX}, \tilde{\cD}) \to  \Spec\left(R[s,t]/(st-\pi)\right)
.\]
Furthermore, the geometric fiber over $0$ is K-semistable.
\end{thm}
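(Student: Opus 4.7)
The plan is to verify the Finite Generation Condition \ref{a-finitegenerate} by identifying the filtration $\cG^{\bullet}V_m$ from Section \ref{ss-spolar} with the order-of-pole filtration along $X'_0$ studied in \cite{BX18}, invoke the finite generation theorem from that paper, and then apply Lemma \ref{L:filling_conditions} together with Lemma \ref{l-sequivalent} to show that the resulting extension is a $\bQ$-Gorenstein family of K-semistable log Fano pairs.

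First I would fix $r$ sufficiently divisible so that $L := -r(K_{X/R}+D)$ and $L' := -r(K_{X'/R}+D')$ are relatively ample Cartier divisors, and consider the polarized family $(\cX,\cL) \to \overline{\ST}_R \setminus 0$ obtained by gluing along $\alpha$. Setting $V_m := H^0(X,\oh_X(mL))$ and $V'_m := H^0(X',\oh_{X'}(mL'))$, both embed as $R$-lattices in $V_{m,K} := H^0(X_K,\oh_{X_K}(mL_K))$ via $\alpha$. By \eqref{E:push-forward}, the algebra $\bigoplus_m j_*q_*\oh_\cX(m\cL)$ is, as a bigraded $R[s,t]/(st-\pi)$-algebra, identified with $\bigoplus_{p,m}\cG^p V_m$, where $\cG^p V_m = V_m \cap \pi^p V'_m$. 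Up to a grading shift this agrees with the filtration from \cite{BX18}, and the main finite generation result there asserts that the corresponding bigraded algebra is finitely generated. This verifies Condition \ref{a-finitegenerate}, so Lemma \ref{L:filling_conditions} produces a unique flat $\bG_m$-equivariant extension $(\tilde{\cX},\tilde{\cL}) \to \overline{\ST}_R$.

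Next I would take $\tilde{\cD} \subset \tilde{\cX}$ to be the closure of $\cD$ and check that $(\tilde{\cX},\tilde{\cD}) \to \overline{\ST}_R$ is a $\bQ$-Gorenstein family of log Fano pairs with $\tilde{\cL} \sim_{\bQ} -r(K_{\tilde{\cX}/\overline{\ST}_R}+\tilde{\cD})$. Away from $0$ this is immediate since $(\tilde{\cX},\tilde{\cD})$ agrees with $(\cX,\cD)$ there. Near $0$, the restriction of $(\tilde{\cX},\tilde{\cD})$ along the closed immersion $\Theta_\kappa \hookrightarrow \overline{\ST}_R$ cut out by $t=0$ is, by the very construction, the $\uProj$ of the graded ring of \cite{BX18}, hence coincides with the special test configuration of $(X_\kappa,D_\kappa)$ produced there; in particular it is $\bQ$-Gorenstein with log Fano central fiber $(\tilde{\cX}_0,\tilde{\cD}_0)$. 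Symmetrically, restriction along $s=0$ is a special test configuration of $(X'_\kappa,D'_\kappa)$ with the same central fiber but the inverse $\bG_m$-action. Combined with flatness this gives the $\bQ$-Gorenstein log Fano property of $(\tilde{\cX},\tilde{\cD})$ over $\overline{\ST}_R$.

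Finally, for K-semistability of $(\tilde{\cX}_0,\tilde{\cD}_0)$, the previous step exhibits it as the common central fiber of two special test configurations, of $(X_\kappa,D_\kappa)$ and of $(X'_\kappa,D'_\kappa)$, carrying opposite $\bG_m$-weights; hence the two generalized Futaki invariants are negatives of each other. Since both $(X_\kappa,D_\kappa)$ and $(X'_\kappa,D'_\kappa)$ are K-semistable, both invariants are nonnegative, so both vanish, and Lemma \ref{l-sequivalent} yields K-semistability of $(\tilde{\cX}_0,\tilde{\cD}_0)$. The main obstacle is the finite generation step, which is the deep ingredient imported from \cite{BX18}; a secondary subtlety is verifying that the abstract $\uProj$ extension is genuinely $\bQ$-Gorenstein with klt log Fano fibers rather than merely a flat polarized family, which should fall out once the restrictions along $t=0$ and $s=0$ are matched with the special test configurations constructed in \cite{BX18}.
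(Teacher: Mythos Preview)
Your approach is essentially the paper's: identify $\cG^\bullet V_m$ with the order-of-vanishing filtration $\cF^\bullet V_m$ of \cite{BX18} up to the shift in Proposition~\ref{p-comparefilt}, invoke finite generation (Proposition~\ref{p-finitegen}), apply Lemma~\ref{L:filling_conditions}, and deduce K-semistability of the central fiber via vanishing Futaki invariant and Lemma~\ref{l-sequivalent}. Your argument that $\Fut=0$ because the two copies of $\Theta_\kappa$ carry inverse $\bG_m$-actions on a common central fiber is a clean variant; the paper instead imports $\Fut=0$ directly from \cite[\S 5.2]{BX18}.

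The one place where your sketch is genuinely incomplete is the ``secondary subtlety'' you flag at the end. Matching $(\tilde{\cX},\tilde{\cD})|_{s=0}$ with the special test configuration of \cite{BX18} does not fall out for free: you must first know that no component of the closure $\tilde{\cD}$ contains the fiber $\tilde{\cX}_0$, i.e.\ that the divisorial restriction is well-defined and agrees with the test-configuration boundary. The paper isolates this as Proposition~\ref{p-cod1}, checking that for each prime $B\subset\Supp(D)$ the induced ideal in $\bigoplus_{m,p}\gr_{\cF}^p V_{\kappa,m}$ cuts out a codimension~$\geq 1$ locus. Likewise, the $\bQ$-Cartierness of $K_{\tilde{\cX}}+\tilde{\cD}$ is not deduced from the restrictions to $s=0$ and $t=0$ but by a direct argument: one writes $\tilde{\cL}\sim_\bQ -(K_{\tilde{\cX}}+\tilde{\cD})+G$ with $G$ supported on $\{s=0\}$, and uses that $\{s=0\}$ is an irreducible Cartier divisor.
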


 \begin{rem}

 \begin{enumerate}
 \item  The above theorem immediately implies that $\kkX^{\rm Kss}_{n,V}$ is S-complete with respect to essentially of finite type DVRs.
 
 \item
Theorem \ref{t-scomplete} is an extension of \cite[Thm 1.1.1]{BX18}, 
which states that if $(X_{\overline{\kappa}}, D_{\overline{\kappa}})$ 
and $(X'_{ \overline{\kappa}}, D'_{\overline{\kappa}})$ are K-semistable,
then they degenerate to a common K-semistable log Fano pair via special
 test configurations. Indeed, the restriction of 
 $({\cX},  {\cD}) \to \Spec\left(R[s,t]/(st-\pi)\right) $ 
 to $s=0$ and $t=0$ are naturally test configurations of $(X_\kappa,D_\kappa)$
 and $(X'_\kappa,D'_\kappa)$ with special fiber $(\tilde{\cX}_{0}, \tilde{\cD}_{0})$.
 
\item
The results in \cite{BX18} are phrased in the setting of families
 over a smooth pointed curve, not the spectrum of a DVR. Fortunately, 
 the proofs in \cite[Sect. 5]{BX18} extend with little change to the more general setting of families 
 over DVRs which are essentially of finite type over $k$.  

However, the argument does not automatically 
generalize to families over the spectrum of a general DVR over $k$,
since a key ingredient in the proof relies on the MMP, specifically \cite{BCHM10}.
While the latter results hold for varieties 
(and, hence, have natural extensions to essentially of finite type $k$-schemes),
they are not known to hold more generally.
\end{enumerate}
 \end{rem}

\subsubsection{Filtration from \cite{BX18}}\label{s-vanishingfilt}
Consider a diagram over $\Spec(R)$
\begin{center}
\begin{tikzcd}
  & Y \arrow[dr, "\rho'"] \arrow[dl,swap,"\rho"]& \\
  X  \arrow[rr, dashrightarrow, "\phi'"] & & X'
   \end{tikzcd} ,
   \end{center}
where $\rho$ and $\rho'$ are proper birational morphisms and $Y$ is normal.
Write $\widetilde{X}_0$ and $\widetilde{X}'_0$ for the birational transforms of $X_0$ and $X'_0$ on $Y$.

Fix a positive integer $r$ such that $L:=-r(K_X+D) $ and $L' : = -r(K_{X'}+D')$ are Cartier divisors.
 Let
\[
V:=  \bigoplus_{m \in \N} V_m := \bigoplus_{m \in \N} H^0(X, \cO_{X}(mL )) 
 \quad 
 \text{ and } 
 \quad
 V':=  \bigoplus_{m \in \N} V'_m := \bigoplus_{m \in \N} H^0(X', \cO_{X'}(mL' )) 
\]
denote the section rings of $X$ and $X'$ with respect to $L$ and $L'$.
We write $V_{\kappa}= \bigoplus_m V_{\kappa,m}$
and
$  V_{K}= \bigoplus_m V_{K,m}$ 
for the restrictions of  $V$ to $\Spec(\kappa)$ and $\Spec(K)$, respectively.
Note that each $V_m$ is a flat $R$-module and satisfies cohomology and base change, 
since  $H^i(X, \cO_{X}(mL) )=0$ for $i>0$ and $m\geq 0$ by \cite[Thm. 10.37]{Kol13}. 
Therefore, $V_{\kappa}$ and $V_K$ are isomorphic to the section rings of $L_{\kappa}$ and $L_{K}$. 

Following \cite[Sect. 5.1]{BX18}, for each $m \in \N$ and $p \in \Z$, we set 
\begin{equation}\label{e-filtBX}
\cF^{p} V_m : = \{ g \in V_{m} \, \vert\, \ord_{\widetilde{X}'_0} (g) \geq p   \},
\end{equation}
where
 $ \ord_{\tilde{X}'_0} (g)$ equals the coefficient of $\widetilde{X}'_0$ in  ${\rm div} (\rho^*(g))$.
 Observe that 
\begin{equation}\label{e-piFp}
\pi \cF^{p-1} V_m =  \cF^p V_m \cap \pi V_m
\end{equation}
and setting
$$\cF^p V_{\kappa, m} := {\rm im}( \cF^p V_m \otimes_R \kappa \to V_{m,\kappa} ) \subseteq V_{\kappa,m}
,$$
gives a filtration of the section ring $V_\kappa$.  
We state two results from \cite[Section 5.2]{BX18} concerning this filtration.

\begin{prop}\label{p-finitegen}
If  $(X_{\overline{\kappa}}, D_{\overline{\kappa}})$ 
and $(X'_{\overline{\kappa}}, D'_{\overline{\kappa}})$ are K-semistable, then:
\begin{enumerate}
\item 
The $\kappa[t]$-algebra 
 $\displaystyle \bigoplus_{m\in \N} \bigoplus_{p\in \Z}  \left(\cF^p V_{\kappa,m} \right) t^{-p}
 $ and $\kappa$-algebra  $\displaystyle\bigoplus_{m\in \N} \bigoplus_{p\in \Z}  \gr_{\cF}^p V_{\kappa,m}$
  are finitely generated;
  \item
   The test configuration  $(\cX_\kappa,\cD_\kappa) \to \mathbb{A}^1_{{\kappa}}$ 
   of $(X_\kappa, D_\kappa)$ induced by the $\kappa[t]$-algebra in (1)
   is special and  
   {the geometric fiber over $0$}  is K-semistable.
  \end{enumerate}
  \end{prop}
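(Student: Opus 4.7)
The plan is to deduce both (1) and (2) from the techniques developed in \cite[\S 5]{BX18}, transposed to the setting of a DVR $R$ essentially of finite type over $k$. Since finite generation of a graded algebra and K-semistability are unchanged under the base change $\kappa \to \overline{\kappa}$, we may immediately assume $\kappa$ is algebraically closed.

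The first step is to give $\cF$ a divisorial interpretation on the central fiber. The proper transform $\widetilde{X}'_0 \subset Y$ is either $\rho$-exceptional or dominates $X_\kappa$; in either case $v := \ord_{\widetilde{X}'_0}$ is a divisorial valuation on the function field of $X$ which, after restriction, yields a divisorial valuation $E$ over $X_\kappa$. Using the identity \eqref{e-piFp} together with $\rho^\ast \pi = \widetilde{X}_0 + \widetilde{X}'_0 + (\text{other components})$, one verifies that $\cF^p V_{\kappa,m}$ agrees, up to a shift by a multiple of $m$ reflecting $v(\pi)$, with the filtration of $V_{\kappa,m}$ by vanishing order $\geq p$ along $E$.

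For part (1), finite generation of both the $\kappa[t]$-algebra and the associated graded $\kappa$-algebra is proved by constructing a dlt modification of $(X, D+X_0)$ that extracts $E$ as an lc place and then running a suitable relative MMP (applying \cite{BCHM10}) to produce a model on which the extended anti-log-canonical algebra is manifestly finitely generated; this is the content of \cite[\S 5.2]{BX18}, which extends without essential change to the present DVR setting. For part (2), once finite generation holds, taking $\Proj$ produces $(\cX_\kappa,\cD_\kappa) \to \bA^1_\kappa$, and specialness (klt central fiber, anti-log-canonically polarized) is automatic from how the model was extracted via the MMP. To upgrade this to K-semistability of the geometric central fiber, one computes the Futaki invariant: by the symmetric roles of $(X,D)$ and $(X',D')$, the analogous construction with $\widetilde{X}_0$ in place of $\widetilde{X}'_0$ yields a special test configuration of $(X'_\kappa,D'_\kappa)$ with the same central fiber but Futaki invariant of opposite sign. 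K-semistability of both $(X_\kappa,D_\kappa)$ and $(X'_\kappa,D'_\kappa)$ then forces $\Fut(\cX_\kappa,\cD_\kappa) = 0$, and Lemma \ref{l-sequivalent} delivers the conclusion.

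The main obstacle is the finite generation in (1). The MMP run must preserve the relevant singularity and positivity conditions at every step, terminate, and output a model whose section ring literally matches the filtered algebra produced by $\cF$; all of this is genuinely nontrivial and constitutes the technical core of \cite[\S 5.2]{BX18}. The remaining steps --- the divisorial interpretation of $\cF$, verification of specialness, and the Futaki computation --- are comparatively formal once finite generation is in hand.
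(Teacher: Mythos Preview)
Your proposal is correct and follows essentially the same approach as the paper: both defer to \cite[\S 5.2]{BX18} for the finite generation, specialness, and the vanishing of the Futaki invariant (your symmetry argument is precisely what \cite{BX18} does), and then invoke Lemma~\ref{l-sequivalent} to conclude K-semistability of the central fiber. The only cosmetic difference is that you reduce to $\kappa=\overline{\kappa}$ at the outset, whereas the paper works over $\kappa$ and passes to $\overline{\kappa}$ only at the end via the invariance $\Fut(\cX_{\overline{\kappa}},\cD_{\overline{\kappa}})=\Fut(\cX_{\kappa},\cD_{\kappa})$.
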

  
  \begin{proof}
  The argument in \cite[Sect. 5.2]{BX18} implies (1) 
  and that the induced test configuration $(\cX_\kappa,\cD_\kappa) \to \mathbb{A}^1_\kappa$ of $(X_\kappa,D_\kappa)$ is a special test configuration with  Futaki invariant zero.
 Since  $\Fut(\cX_{\overline{\kappa}}, \cD_{\overline{\kappa}})= \Fut( \cX_{\kappa}, \cD_\kappa)$ and the latter is zero,  $( \cX_{\overline{\kappa}}, \cD_{\overline{\kappa}})_0$ must be K-semistable by Lemma \ref{l-sequivalent}.
 \end{proof}

In the proof of Theorem \ref{t-scomplete}, we will need to show that the boundary divisor $\cD$ in
\eqref{e-punctured} extends to a well defined family of cycles over $\Spec\left( R[s,t]/(st-\pi) \right)$. 
For this, let $B$ be a prime divisor in $\Supp(D)$ and write $I_B \subseteq \bigoplus_m V_m $ for the homogenous ideal defining $B$. Consider the homogenous ideal
\begin{equation}\label{e-Idef}
I : =
 \bigoplus_{m \in \N} \bigoplus_{p \in \Z} {\rm im} \left( I_B  \cap \cF^p V_m \to \gr_{\cF}^p V_{\kappa,m} \right) 
 \subseteq \underset{m \in \N}{\bigoplus} \underset{p \in \Z}{\bigoplus} \gr_{\cF}^p V_{\kappa,m}
.\end{equation}

\begin{prop}\label{p-cod1}
If $(X_{\overline{\kappa}}, D_{\overline{\kappa}})$ and $(X'_{\overline{\kappa}}, D'_{\overline{\kappa}})$
are K-semistable, 
 then the subscheme defined by $I$ is of codimension at least one. 
\end{prop}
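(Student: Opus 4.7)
The plan is to reduce the codimension statement to a Hilbert polynomial comparison. By Proposition~\ref{p-finitegen}, the central fiber of the induced special test configuration has homogeneous coordinate ring $A := \bigoplus_m A_m$, where $A_m := \bigoplus_p \gr_\cF^p V_{\kappa,m}$, and $\dim_\kappa A_m = h^0(X_\kappa, mL_\kappa)$ grows as a polynomial of degree $n := \dim X_\kappa$ in $m$. It thus suffices to show that $\dim_\kappa(A/I)_m$ grows as a polynomial of degree at most $n-1$, so that the subscheme $V(I) = \Proj(A/I)$ has dimension $n-1$ and hence codimension one in the central fiber.

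First I would introduce the induced filtration $\cG^p := I_B \cap \cF^p V_m$ on $I_B \cap V_m \subseteq V_m$ and analyze its reduction modulo $\pi$. Since the prime divisor $B$ is horizontal over $\Spec R$ (as $\Supp D$ contains no fiber), $\pi$ does not lie in the prime ideal $I_B$, and hence $I_B \cap \pi M = \pi(I_B \cap M)$ for every $R$-submodule $M \subseteq V$. Combined with \eqref{e-piFp}, this yields $\cG^p \cap \pi(I_B \cap V_m) = \pi\,\cG^{p-1}$, so the image of $\cG^p$ in $V_{\kappa,m}$ is naturally $\cG^p/\pi\,\cG^{p-1}$. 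A further chase then identifies
\[
I^p_m \;\cong\; \cG^p/\bigl(\cG^{p+1} + \pi\,\cG^{p-1}\bigr),
\]
which is precisely the $p$-th graded piece of $(I_B \cap V_m)\otimes_R\kappa$ under the induced filtration. Since $\cF^p V_m = V_m$ for $p \leq 0$ and $\cF^p V_m = 0$ for $p \gg 0$, telescoping then yields $\sum_p \dim_\kappa I^p_m = \dim_\kappa (I_B \cap V_m)\otimes_R\kappa$.

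To conclude, for $m \gg 0$ Serre vanishing $H^1(X, mL-B)=0$ and the flatness of $B$ over $\Spec R$ produce a short exact sequence of flat $R$-modules $0 \to I_B \cap V_m \to V_m \to H^0(B, mL|_B) \to 0$. Reducing modulo $\pi$ and invoking cohomology and base change,
\[
\dim_\kappa(A/I)_m \;=\; \dim_\kappa V_{\kappa,m} \,-\, \dim_\kappa(I_B\cap V_m)\otimes_R\kappa \;=\; h^0(B_\kappa, mL|_{B_\kappa}),
\]
a polynomial of degree $\dim B_\kappa = n-1$ in $m$. Hence $\Proj(A/I)$ has dimension $n-1$, as required.

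The main obstacle will be the identification $I^p_m \cong \cG^p/(\cG^{p+1} + \pi\,\cG^{p-1})$: the containment $\supseteq$ is immediate from the definitions, while the reverse requires lifting classes in $I_{B_\kappa} \cap \cF^p V_{\kappa,m}$ back to $\cG^p$ compatibly with the filtration. This reduces to verifying that $N_m/\cF^{p-1}N_m$ is $\pi$-torsion free (where $N_m := V_m/(I_B\cap V_m)$), which exploits the flatness of $N_m$ together with the divisorial structure of $\cF^\bullet$. The remaining ingredients---\eqref{e-piFp}, Serre vanishing, and cohomology and base change---are standard.
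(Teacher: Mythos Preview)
Your approach via Hilbert functions is different from the paper's, which instead compares $V(I)$ geometrically with the central fiber of the closure $\cB_\kappa$ of $B_\kappa \times (\bA^1 \setminus 0)$ inside the test configuration $\cX_\kappa$, invoking \cite[Prop.~5.13.1]{BX18} to conclude that the two agree away from a codimension-two locus.

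The gap in your argument is exactly where you locate it, and it is genuine. The identification $I^p_m \cong \cG^p/(\cG^{p+1} + \pi\cG^{p-1})$ amounts to injectivity of the natural map $\cG^p/(\cG^{p+1} + \pi\cG^{p-1}) \to \gr_\cF^p V_{\kappa,m}$, and without it the inequality you obtain runs the wrong way: telescoping gives only $\dim_\kappa I_m \le \dim_\kappa (I_B \cap V_m)\otimes_R\kappa$, hence $\dim_\kappa(A/I)_m \ge h^0(B_\kappa, mL|_{B_\kappa})$, which does not bound $\dim V(I)$ from above. You correctly translate the needed injectivity into the condition that every $N_m/\cF^{p}N_m$ be $\pi$-torsion-free (equivalently, that the induced Rees module $\bigoplus_p \cF^p N_m$ be flat over $R[s,t]/(st-\pi)$). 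However, this fails for abstract filtered modules satisfying \eqref{e-piFp}: take $V=R^2$, $V'=R(e_1+\pi^{-1}e_2)\oplus Re_2$, so that $\cF^pV = V \cap \pi^pV'$; with $W = Re_2$ one finds $\cF^0 N = \pi R\,\bar e_1 \subsetneq N = R\,\bar e_1$, so $N/\cF^0 N = \kappa$ has $\pi$-torsion, and indeed $e_2 = (\pi e_1 + e_2) - \pi e_1 \in \cG^0 \cap (\cF^1 + \pi \cF^{-1})$ does not lie in $\cG^1 + \pi\cG^{-1} = \pi Re_2$. In this example one gets $I_m = 0$ for that $m$. So ``flatness of $N_m$ together with the divisorial structure of $\cF^\bullet$'' cannot be a purely module-theoretic statement; you would need to invoke the specific geometry (for instance, identifying $\bigoplus_p \cG^p$ with the $j_*$-extension of the pair $(I_{B,m}, I_{B',m})$ and then separately controlling the cokernel), which your outline does not do. As it stands, the key step is unproven.
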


\begin{proof}
Let $(\cX_\kappa, \cD_\kappa) \to \mathbb{A}_\kappa^1$ denote the test configuration described in Proposition \ref{p-finitegen}
and write $\cB_\kappa$ for the closure of $B_\kappa \times (\mathbb{A}^1 \setminus 0)$ in $\cX_\kappa$ under the imbedding   ${X_\kappa \times (\mathbb{A}^1 \setminus 0) \hookrightarrow \cX_\kappa}$. 
Clearly, the scheme theoretic fiber of $\cB_\kappa$ over $0$ is of codimension one in  $(\cX_\kappa)_0 \simeq \Proj( \bigoplus_m \bigoplus_p \gr_{\cF}^{p} V_{\kappa,m})$.
Since $V(I)$  and the scheme theoretic fiber of $\cB_\kappa$ over $0$ agree away from a  codimension 2 subset  by \cite[Prop. 5.13.1]{BX18}, $V(I)$ is also of codimension at least one.
\end{proof}
   
In light of the discussion in Section \ref{ss-spolar}
, 
observe that
\begin{equation}\label{e-ReesF}
 \underset{m\in \N } {\bigoplus} \underset{ p \in \Z }{\bigoplus} \left( \cF^p V_m \right) t^{-p}
 \end{equation}
has the structure of a $\Z$-graded $R[s,t]/(st-\pi)$-module, where the map $  \left(\cF^p V_m \right)  t^{-p} \overset{s}{\to}   \left(\cF^{p+1} V_m\right) t^{-p-1}$ is defined by $ g t^{-p}\mapsto  \pi gt^{-p-1}$. 
Additionally,  
\begin{equation}\label{e-restricts=0}
\Big( \bigoplus_{m\in \N} \bigoplus_{p\in \Z}  \left(\cF^p V_m \right) t^{-p}  \Big) \bigotimes_{R[s,t]/(st-\pi)} \kappa[t]
\simeq 
 \bigoplus_{m\in \N} \bigoplus_{p\in \Z} \left(\cF^p V_{\kappa,m} \right) t^{-p} 
,\end{equation}
since \eqref{e-piFp} implies 
$\frac{\cF^p V_{m }}{
\pi \cF^{p-1} V_m } = \frac{\cF^p V_m}{ \pi V_m \cap \cF^p V_m } 
\simeq 
 {\rm im} \left( \cF^p V_m \to \frac{V_{m}}{\pi V_m} \right).
 $
Therefore, 
\begin{equation}\label{e-fiberover0}
\Big( \bigoplus_{m\in \N} \bigoplus_{p\in \Z}   (\cF^p V_m) t^{-p} \Big) \bigotimes_{R[s,t]/(st-\pi)} \kappa \, 
\simeq  \, \bigoplus_{m \in \N} \bigoplus_{p \in \Z} \gr_{\cF}^p V_{\kappa,m},
\end{equation}
where $R[s,t]/(st-\pi) \to \kappa$ is the morphism  that sends $s$ and $t$ to $0$.
\medskip

The following proposition states that the filtration $\cF^\bullet$ from \cite{BX18} coincides with the filtration from Section \ref{ss-spolar} up to a shift. 
See \cite[Section 2.5]{BHJ17}, \cite[Claim 5.4]{Fuj16} or \cite[(64)]{Li17} for related arguments applied to test configurations.

\begin{prop}\label{p-comparefilt}
For each $p\in \Z$ and $m \in \N$, 
\[
\cF^{p-mra} V_m= V_m \cap \pi^{p}V'_m,
\]
where $a :={\rm coeff}_{\widetilde{X}'_0} \left(K_Y - \rho^*(K_X+D) \right)$.
\end{prop}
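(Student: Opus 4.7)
The plan is to interpret both $\cF^{p-mra}V_m$ and $V_m \cap \pi^p V'_m$ as subsets of $V_m \subset V_K$ cut out by a single valuation condition at $\widetilde{X}'_0$, but formulated with respect to two different line bundles on $Y$: namely $\rho^*\cO_X(mL)$ and $(\rho')^*\cO_{X'}(mL')$. The shift by $mra$ will then appear as the coefficient at $\widetilde{X}'_0$ of the $\bQ$-Cartier divisor $(\rho')^*(mL') - \rho^*(mL)$ on $Y$.

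First, I would rewrite the right-hand side directly on $X'$: by normality of $X'$ and the fact that $X'_0$ is the unique codimension-one subscheme lying over the closed point of $\Spec R$, an element $g \in V_K$ lies in $\pi^p V'_m$ iff $\ord_{X'_0}(g) \geq p$ for the valuation computed from $g$'s divisor as a rational section of $\cO_{X'}(mL')$. Pulling back along $\rho'$ and using that $\widetilde{X}'_0 = (\rho')^{-1}_*(X'_0)$ appears with multiplicity one in $(\rho')^*(X'_0)$, while the remaining part of $\mathrm{div}_{X'}(g)$ is supported on $X'_K$ and contributes nothing to $\widetilde{X}'_0$, this reads
\[
g \in V_m \cap \pi^p V'_m \iff g \in V_m \text{ and } \operatorname{coeff}_{\widetilde{X}'_0}\!\bigl(\mathrm{div}_{(\rho')^*(mL')}(g)\bigr) \geq p,
\]
where $g$ is viewed on $Y$ as a rational section of $(\rho')^*(mL')$ via the generic identification $X_K \cong X'_K$. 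By comparison, the defining condition for $\cF^{p-mra}V_m$ is the analogous inequality $\geq p-mra$ but with $(\rho')^*(mL')$ replaced by $\rho^*(mL)$. So the statement reduces to the identity
\[
\operatorname{coeff}_{\widetilde{X}'_0}\!\bigl(\mathrm{div}_{(\rho')^*(mL')}(g)\bigr) - \operatorname{coeff}_{\widetilde{X}'_0}\!\bigl(\mathrm{div}_{\rho^*(mL)}(g)\bigr) = mra.
\]

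Second, I would carry out the coefficient computation. Because the two line bundles on $Y$ agree on the generic fiber, the divisor shift formula gives that the left-hand side above equals the coefficient of $\widetilde{X}'_0$ in $(\rho')^*(mL') - \rho^*(mL)$. Unwinding $L = -r(K_X+D)$ and $L' = -r(K_{X'}+D')$,
\[
(\rho')^*(mL') - \rho^*(mL) = mr\bigl(\rho^*(K_X+D) - (\rho')^*(K_{X'}+D')\bigr),
\]
a divisor supported over the closed point of $\Spec R$. Write $K_Y = \rho^*(K_X+D) + E$ and $K_Y = (\rho')^*(K_{X'}+D') + E'$ where $E$, $E'$ are the discrepancy divisors. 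By definition, $\operatorname{coeff}_{\widetilde{X}'_0}(E) = a$, while $\operatorname{coeff}_{\widetilde{X}'_0}(E') = 0$ because $\widetilde{X}'_0$ is non-exceptional for $\rho'$ (being the strict transform of $X'_0$) and $X'_0$ is not a component of $D'$. Subtracting yields the required $mra$ after tracking signs, completing the identification.

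The main obstacle is purely bookkeeping: one must keep straight which line bundle a given rational section of $V_K$ is interpreted in, and verify the divisor shift formula $\mathrm{div}_{L_1}(g) = \mathrm{div}_{L_2}(g) + (L_1 - L_2)$ under the tacit identification coming from the generic isomorphism. The substantive content is concentrated in the coefficient calculation and in the fact—ensured by normality of $X'$ together with $X'_0$ being the only divisor over the closed point—that the condition $g \in \pi^p V'_m$ is controlled by a single valuation on $Y$, so no cross-check at other components of $Y_0$ is needed.
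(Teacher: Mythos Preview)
Your proposal is correct and follows essentially the same strategy as the paper: both arguments translate membership in $\pi^p V'_m$ into an effectivity condition on $Y$ for the divisor $\rho^*G + m\bigl((\rho')^*L' - \rho^*L\bigr) - p(\rho')^*X'_0$, then reduce to a single coefficient check at $\widetilde{X}'_0$ (you do this directly on $Y$, the paper pushes forward via $\rho'_*$ to $X'$ and checks at $X'_0$, which amounts to the same thing), and finally compute the shift as the $\widetilde{X}'_0$-coefficient of $(\rho')^*L' - \rho^*L = r\bigl((K_Y-\rho^*(K_X+D)) - (K_Y-(\rho')^*(K_{X'}+D'))\bigr)$ using that the second discrepancy has coefficient $0$ at $\widetilde{X}'_0$. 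Your phrase ``after tracking signs'' is doing real work here---carry it out explicitly, since this is exactly where errors creep in (indeed the paper's own displayed formula for $(\rho')^*L'-\rho^*L$ has a sign slip).
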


The intersection in the above proposition is taken after using the isomorphism $\alpha^*: K(X') \to K(X)$
to view  $V'_m:= H^0(X', \cO_{X'}(mL'))$ as a  $R$-submodule of $K(X)$.

\begin{proof}
First, observe that there are natural isomorphisms 
 \begin{multline*}
 \pi^{p}H^0\left(X', \cO_{X'}(mL') \right) \simeq 
  H^0\left(X', \cO_{X'}(mL' -p X'_0 ) \right) \\
  \simeq 
 H^0\left(Y, \cO_Y \left(  {\rho'}^*(m L' -p X'_0) \right)\right) \\
  = H^0 \left( Y, \cO_Y \left( m \rho^*L + m  ( {\rho'}^* L' -\rho^*L) - p {\rho'}^* X'_0 \right) \right).
  \end{multline*}
  Next, fix $g \in H^0(X, \cO_X(mL))$ and set  $G= {\rm div}( g) $.
By the above isomorphisms, $g  \in  \pi^{p} H^0(X', \cO_{X'}(mL'))$ if and only if 
\[
G' := 
\rho^*G + \left( m  ({\rho'}^*L' -\rho^*L) - p {\rho'}^* X'_0 \right) \geq 0.\]
Note that $G'$ is the pullback of a $\Q$-Cartier $\Q$-divisor on $X'$, since
\[G'
\sim_{\Q}
m\rho^* L  +\left( m  ({\rho'}^*L' -\rho^*L) - p {\rho'}^* X'_0 \right)
 \sim_{\Q} {\rho'}^*( mL' - p X'_0).\]
Therefore, 
$G'$ is effective if and only if 
$\rho'_* G'$ is effective. 

To understand whether or not $\rho'_* G'$  is effective, 
observe
\[
{\rho'}^*L' -\rho^*L =  r\LARGE( \left(K_Y- \rho^*(K_X+D) \right) -\left(K_Y-\rho'^*(K_{X'}+D') \right) \LARGE) \]
and, hence,
\[
{\rho'}_*  \left( m({\rho'}^*L' -\rho^*L) - p{ \rho'}^*X'_0 \right)
= 
m \left( r (D' + a X'_0) -r D' \right)  - p X'_0 = (mra-p) X'_0
\]
Therefore, $ \rho'_* G' $ is effective if and only if $ {\rm coeff}_{\widetilde{X}'_0} ( \rho^* G) + (mra-p) \geq 0$. 
\end{proof}

 \subsubsection{Proof of S-completeness}
  
We are now in position to prove Theorem \ref{t-scomplete}  as a  consequence of the results in Sections  \ref{ss-spolar} and \ref{s-vanishingfilt}. 

\begin{proof}[Proof of Theorem \ref{t-scomplete}]
Following Section \ref{ss-spolar}
, we consider the $\Z$-graded $R[s,t]/(st-\pi)$-algebra 
\begin{equation}\label{e-ReesVV'}
 \underset{m \in \N}{\bigoplus} \underset{p \in \Z}{\bigoplus} \left( V_m \cap \pi^p V'_m \right)  t^{-p}.
 \end{equation}
Note that this algebra equals $\underset{m}{\bigoplus} \underset{p}{\bigoplus} \left( \cF^{p-mra}V_m \right) t^{-p}$  by Proposition \ref{p-comparefilt}
and its restriction to $0\in \Spec(R[s,t]/(st-\pi)$ is isomorphic to $\underset{m}{\bigoplus} \underset{p}{\bigoplus} \gr_{\cF}^{p-mra} V_{\kappa,m}$
by Equation \eqref{e-fiberover0}.
Since the latter $\kappa$-algebra is of finite type by Proposition \ref{p-finitegen} (1), 
Lemma \ref{L:filling_conditions} implies that the
the $R[s,t]/(st-\pi)$-algebra \eqref{e-ReesVV'} is  finite type.

Set $ \tilde{\cX} : = {\underline{ \Proj}}_{R[s,t]/(st-\pi)} \Big( 
 \underset{m}{\bigoplus} \underset{p}{\bigoplus} ( V_m \cap \pi^p V'_m)t^{-p}\Big)
$
and write $\tilde{\cD}$ for the component-wise closure of ${D \times( \mathbb{A}^{1} \setminus 0) }$
under the embedding 
${X  \times ( \mathbb{A}^{1} \setminus 0) 
\simeq 
\tilde{\cX}_{ t\neq 0  } 
\hookrightarrow
\tilde{\cX}}$.
The grading with respect to $p$ gives a $\mathbb{G}_m$-action on $\tilde{\cX}$ that fixes $\tilde{{\cD}}$.

We claim that  $\tilde{\cX} \to   \Spec\left( R[s,t]/(st-\pi) \right)$ has normal fibers, no component of $\tilde{\cD}$ contains a fiber, and  $K_{\tilde{\cX}}+\tilde{\cD}$ is $\Q$-Cartier.
The statement is clear away from the fiber over $0$. 
Next, note that  $\tilde{\cX}_{0}\simeq \Proj(  \bigoplus_m \bigoplus_{p}  \gr_{\cF}^{p-mra} V_{\kappa,m})$, 
which is the fiber over $0 \in \mathbb{A}^1_{\kappa}$ of the special test configuration in Proposition \ref{p-finitegen}. 
Hence, $\tilde{\cX}_{0}$ is normal.

To see $\tilde{\cX}_{0} \not\subset \Supp (\tilde{\cD})$, fix a prime divisor $B$ in the support of $D$ and write  $\tilde{\cB}$ for the closure of $B\times (\mathbb{A}^1 \setminus 0)$ in $\tilde{\cX}$. 
If $I_B \subseteq \bigoplus_m V_m$ is the homogenous ideal  defining $B$, then 
$\tilde{\cB}$ is defined by the homogenous ideal 
\[
\bigoplus_m \bigoplus_p ( I_B \cap  \cF^{p-mra} V_m ) t^{-p} \subseteq
\bigoplus_m \bigoplus_p   \left( \cF^{p-mra} V_m \right)  t^{-p} .
\]
Hence, the scheme theoretic fiber of $\tilde{\cB}$ over $0 \in \Spec(R[s,t]/(st-\pi))$
agrees with the vanishing of the ideal in \eqref{e-Idef}. 
Since the latter ideal defines a locus of codimension at least one in $\tilde{\cX}_0$ by Proposition \ref{p-cod1},  $\tilde{\cX}_0 \not\subset {\cB}$.

To see  $K_{\tilde{\cX}}+\tilde{\cD}$ is $\Q$-Cartier,  $K_{\tilde{\cX}}+\tilde{\cD}$ is $\Q$-Cartier
fix a  $\Q$-divisor $\tilde{\mathcal{L}}$ on $\tilde{\cX}$ such that $mr \tilde{\mathcal{L}}$ is in the linear equivalence class of  $\cO_{\tilde{\cX}} (m)$ for a positive integer $m$. By construction ${\mathcal{\tilde{L}}}|_{  s \neq 0  } \sim_{\Q}  (-K_{\tilde{\cX}}-\tilde{\cD})|_{  s \neq 0  }$. Therefore, 
$\tilde{\mathcal{L}} \sim_{\Q} -K_{\tilde{\cX}}-\tilde{\cD} + G$,
 for some $\Q$-divisor $G$ supported on  $\tilde{\cX}|_{ s = 0  }$. Since $\tilde{\cX}|_{ s = 0 } $ is an irreducible Cartier divisor, 
 $-K_{\tilde{\cX}}-\tilde{\cD}$ must be $\Q$-Cartier. 

Finally, note that   $(\tilde{\cX},\tilde{\cD})|_{s=0 }  \to \mathbb{A}^1_{\kappa}$ coincides with the 
 special test configuration in Proposition \ref{p-finitegen} by \eqref{e-restricts=0}. Therefore, $(\tilde{\cX}_{\overline{0}} , \tilde{\cD}_{\overline{0}})$ is a K-semistable log Fano pair. 
This implies $(\tilde{\cX},\tilde{\cD}) \to \Spec( R[s,t]/(st-\pi) )$ is a $\Q$-Gorenstein family of log Fano pairs and is the unique extension of  \eqref{e-punctured}  by Lemma \ref{L:filling_conditions}.
\end{proof} 

%

\section{Reductivity of the automorphism group}
In this section, we prove that if $(X,D)$ is a K-polystable log Fano pair, then the automorphism group 
\[
\Aut(X,D) : = \{ g \in \Aut(X) \, \vert\, g^*D = D\}
\]
is reductive (Theorem \ref{t-aut}).  

We note that this result would follow formally from results in the previous section if one could establish that a suitably defined stack parameterizing K-semistable log Fano pairs was represented by a finite type Artin stack.  Indeed, Theorem \ref{t-scomplete} would show that this stack is S-complete with respect to essentially of finite type DVRs  and therefore any closed point 
(i.e. a 
{K-polystable log Fano pair}) has reductive stabilizer (Remark \ref{R:reductive}). 
We will provide a direct alternative argument for the reductivity of $\Aut(X,D)$ inspired by the property of S-completeness.  Our argument has the advantage that it entirely avoids the language of stacks.

\subsection{Setup}
In this section, we fix a log Fano pair $(X,D)$ and write  $D= \sum_{i \in I} a_i D_i$ where the $D_i$ are distinct prime divisors. For each $a$ in the coefficient set $\{a_i \, \vert\, i \in I\}$, set $B_a  := \bigcup_{ a=a_i}D_i$.  Choose $r$ sufficiently divisible and large so that $\cL := \cO_X(-r(K_X+D))$ is a very ample line bundle.   

We will now equip $\Aut(X,D)$ with the structure of a linear algebraic group.  Since $\cL$ is very ample, $\Aut(X, \cL) := \{g \in \Aut(X) \, \mid \, g^* \cL \simeq \cL\}$ is a linear algebraic group as it is a closed subgroup of $\PGL(H^0(X, \cL))$.
For an element $g \in \Aut(X)$, observe that $g^*D = D$ if and only if $g^\ast(\cL) \simeq \cL$ and for all $a$ in the coefficient set, 
$g$ fixes $B_a$.  In other words, 
\[
\Aut(X,D) = \left\{ g \in \Aut(X, \cL) \, \left|  \,  \forall a, \, g(B_a) = B_a \right. \right\}
\]
As the conditions that $g(B_a) = B_a$ are closed conditions, this shows that $\Aut(X,D) \subset \Aut(X,\cL)$ is a closed subgroup.

\subsection{Isotrivial families of K-polystable log Fano pairs}
We begin by stating a special case of  Theorem \ref{t-scomplete} when the family is obtained by gluing two trivial families.  

Let $R$ be a DVR essentially of finite type over $k$ with
fraction field $K$ and residue field $\kappa$. 
Fix a birational map $X_R\dashrightarrow X_R$ 
that induces an isomorphism $\alpha \co (X_K, D_K) \to (X_K,D_K)$.
As $ \Spec (R[s,t]/(st-\pi)) \setminus 0$ is the union of $\Spec(R[s]_s)$ and $\Spec(R[t]_t)$ along $\Spec(K[s]_s) = \bG_{m,K}$, we may glue the two trivial families $X_{R[s]_s} \to \Spec(R[s]_s)$ and $X_{R[t]_t} \to \Spec(R[t]_t)$ along the $\bG_m$-equivariant isomorphism induced by $\alpha$ to obtain a $\mathbb{G}_m$-equivariant
 $\Q$-Gorenstein family of log Fano pairs 
\begin{equation}\label{e-puncturedkps}
(\cX , \cD )
 \to 
 \Spec \left(R[s,t]/(st-\pi) \right) \setminus 0.
\end{equation}
Note that if we write $\cB_a$ for the closure of   $B_a\times \Spec(R[t]_t)$ under the inclusion
$ X_{R[t]_t} \hookrightarrow \cX$, then $\cD = \sum a \cB_a$. 
\begin{prop}\label{p-scompKps}
If $(X,D)$ is K-polystable, 
then 
$$({\cX}, {\cD}) \to  \Spec\left(R[s,t]/(st-\pi)\right) \setminus 0$$
extends to a $\mathbb{G}_m$-equivariant $\Q$-Gorenstein  family of log Fano pairs 
\[
(\tilde{\cX}, \tilde{\cD}) \to  \Spec\left(R[s,t]/(st-\pi)\right)
.\]
with $(\tilde{\cX}_{ \overline{0}}, \tilde{\cD}_{ \overline{0}}) \simeq (X_{\overline{\kappa}},D_{ \overline{\kappa}})$. 
Furthermore, if we write $\tilde{\cD} = \sum a \tilde{\cB}_a$, where $\tilde{\cB}_a$ is the closure of $\cB_a$, 
then each $\tilde{\cB}_a$ is flat over $\Spec(R[s,t]/(st-\pi) )$ with pure fibers.
\end{prop}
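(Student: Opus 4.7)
The plan is to deduce the proposition from Theorem \ref{t-scomplete} together with the K-polystability hypothesis; the latter pins down the central fiber and, through the triviality of the resulting test configurations, also pins down the components $\tilde{\cB}_a$.

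First, I would apply Theorem \ref{t-scomplete}. Since $(X,D)$ is K-polystable it is K-semistable, so both input families $X_R \to \Spec(R)$ with boundary $D_R$ have K-semistable geometric special fiber $(X_{\overline{\kappa}}, D_{\overline{\kappa}})$, and $\alpha$ provides the required isomorphism over $K$. Theorem \ref{t-scomplete} then produces the desired $\mathbb{G}_m$-equivariant $\mathbb{Q}$-Gorenstein extension $(\tilde{\cX}, \tilde{\cD}) \to \Spec(R[s,t]/(st-\pi))$ and guarantees that $(\tilde{\cX}_{\overline{0}}, \tilde{\cD}_{\overline{0}})$ is K-semistable.

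Next, I would use K-polystability to identify the central fiber. The restriction $(\tilde{\cX}, \tilde{\cD})|_{s=0} \to \mathbb{A}^1_\kappa$ is a $\mathbb{G}_m$-equivariant $\mathbb{Q}$-Gorenstein family of log Fano pairs whose fiber away from the origin is $(X_\kappa, D_\kappa)$; after base change to $\overline{\kappa}$ it becomes a special test configuration of $(X,D)_{\overline{\kappa}}$ with K-semistable central fiber $(\tilde{\cX}_{\overline{0}}, \tilde{\cD}_{\overline{0}})$. Invoking that K-polystability ascends to algebraically closed extensions (as discussed around Remark \ref{r-postscript}), $(X,D)_{\overline{\kappa}}$ is K-polystable, so Definition \ref{d-kstable}(3) forces this test configuration to be isomorphic to the trivial product $(X,D)_{\overline{\kappa}} \times \mathbb{A}^1$. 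In particular, $(\tilde{\cX}_{\overline{0}}, \tilde{\cD}_{\overline{0}}) \simeq (X_{\overline{\kappa}}, D_{\overline{\kappa}})$.

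Finally, I would deduce flatness of each $\tilde{\cB}_a$. Over the complement of the origin $0 \in \Spec(R[s,t]/(st-\pi))$, $\tilde{\cB}_a$ restricts to the trivial family $\cB_a$ and is visibly flat with pure fibers. The triviality of the test configuration from the previous paragraph identifies $\tilde{\cB}_a|_{s=0}$ with $B_a \times \mathbb{A}^1_{\kappa}$ (and symmetrically for $\{t=0\}$), so the scheme-theoretic fiber of $\tilde{\cB}_a$ over the origin is $B_a$, of the expected pure dimension $\dim B_a$. The Hilbert polynomial of the fibers of $\tilde{\cB}_a \to \Spec(R[s,t]/(st-\pi))$ is therefore constant, and since the base is integral Noetherian, the standard Hilbert-polynomial criterion yields the required flatness with pure fibers. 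The main obstacle will be the second step: one must verify carefully that $(\tilde{\cX}, \tilde{\cD})|_{s=0}$ genuinely defines a special test configuration (checking the $\mathbb{Q}$-Gorenstein log Fano conditions on the nose, rather than only fiberwise) and legitimately apply K-polystability after base change from $k$ to $\overline{\kappa}$. Once this is in place, the first and third steps reduce to bookkeeping with the finite-generation machinery already developed in Sections \ref{ss-spolar} and \ref{ss-sfano}.
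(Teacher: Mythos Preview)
Your first two steps match the paper's proof: apply Theorem~\ref{t-scomplete} to get the extension with K-semistable geometric central fiber, then use K-polystability of $(X,D)$ to force the test configuration $(\tilde{\cX},\tilde{\cD})|_{s=0,\overline{\kappa}}$ to be a product, whence $(\tilde{\cX}_{\overline{0}},\tilde{\cD}_{\overline{0}})\simeq (X_{\overline{\kappa}},D_{\overline{\kappa}})$.

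The divergence, and the gap, is in your third step. The product isomorphism furnished by K-polystability is an isomorphism of \emph{pairs}: it identifies the $\mathbb{Q}$-divisor $\tilde{\cD}|_{s=0}$ with $D\times\mathbb{A}^1$. From this one can indeed deduce that the \emph{divisorial} (cycle-theoretic) restriction of each $\tilde{\cB}_a$ to $\{s=0\}$ equals $B_a\times\mathbb{A}^1$, by matching coefficients. But your Hilbert-polynomial argument needs the \emph{scheme-theoretic} fiber of $\tilde{\cB}_a$ over the origin, and the isomorphism of $\mathbb{Q}$-divisors does not rule out embedded components in the scheme-theoretic restriction $\tilde{\cB}_a|_{s=0}$ (remember that $\tilde{\cB}_a$ is only a reduced Weil divisor on $\tilde{\cX}$, not Cartier or Cohen--Macaulay). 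So the assertion ``the scheme-theoretic fiber of $\tilde{\cB}_a$ over the origin is $B_a$'' is not justified, and without it neither the Hilbert-polynomial computation nor the purity claim goes through.

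The paper handles this differently: it invokes Koll\'ar's flattening stratification \cite[Thm.~4.33]{Kol19}, which produces a locally closed decomposition $S=\sqcup S_i$ of the base such that a reduced $T$ factors through $S$ if and only if $\tilde{\cB}_a|_T\to T$ is flat with pure fibers. One then observes that each of the loci $\{s\neq 0\}$, $\{t\neq 0\}$, and $\{s=0\}$ factors through $S$ (the first two by construction, the third by the product description of the test configuration at the level of divisorial restrictions), and since these loci pairwise overlap they lie in a common stratum, forcing $S$ to be the whole base. This connectivity argument sidesteps any direct computation of the scheme-theoretic fiber at $0$, which is exactly where your approach gets stuck.
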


By \emph{pure} fibers, we mean that the fibers are equidimensional and have no embedded components.

\begin{proof}
By Theorem \ref{t-scomplete}, the map in \eqref{e-puncturedkps} extends
to  a family  
$(\tilde{\cX}, \tilde{\cD})$ 
with K-semistable geometric fiber over $0 \in  \Spec\left(R[s,t]/(st-\pi)\right)$. 
Hence, the restriction $(\tilde{\cX}, \tilde{\cD})\vert_{s=0, \overline{\kappa}}$ is naturally 
a special test configuration of $(X_{\overline{\kappa}}, D_{\overline{\kappa}})$ with K-semistable geometric fiber over $0 \in \mathbb{A}^1_\kappa$.  
Since $(X,D)$ is K-polystable, this test configuration must be a product 
(i.e. it is isomorphic to $(X_{\mathbb{A}_{\overline{\kappa}}^1}, D_{\mathbb{A}_{\overline{\kappa}}^1})$).
Therefore, $(\tilde{\cX}_{{ \overline{0} }}, \tilde{\cD}_{{\overline{0} }}) \simeq (X_{\overline{\kappa}},D_{ \overline{\kappa}})$.

Next, fix $a$ in the coefficient set of $D$ and consider the divisor $\tilde{\cB}_a$. 
By \cite[Thm. 4.33]{Kol19}, there exists a locally closed decomposition $S=\sqcup S_i \to \Spec(R[s,t]/(st-\pi))$ such that a morphism of schemes $T\to  \Spec(R[s,t]/(st-\pi))$, with $T$ reduced, factors through $S$
if and only if $\tilde{\cB}_a \vert_T \to T$ is flat and has pure fibers. Such locally closed decomposition is unique by definition. 

Now, the loci $\{s=0\}$, $\{s \neq 0\}$ and $\{t \neq 0\}$ factor through $S$, since the divisorial restrictions $\tilde{\cB}_a|_{s=0}$, $\tilde{\cB}_a|_{s\neq 0}$  and $\tilde{\cB}_a|_{t\neq 0}$ are trivial families. Therefore, each of them factors through some locally closed set, denoted by $S_0$, $S_1$ and $S_2$. However, it then follows that $S_0=S_2$ as $\{s=0\} \cap \{t \neq 0\} \neq \emptyset$ and  $S_1=S_2$ as  $\{s \neq 0\}\cap \{t \neq 0\} \neq \emptyset$. 
Therefore, we have $S=S_0 = \Spec(R[s,t]/(st-\pi))$, so  $ \tilde{\cB}_a \to \Spec\left(R[s,t]/(st-\pi)\right)$ is flat with pure fibers.
\end{proof}

\subsection{Reductivity via Iwahori decompositions}
Throughout this section, let $R=k[[\pi]]$ and $K=k((\pi))$. 
Given a linear algebraic group $G$,
Iwahori's theorem (cf. \cite[p.52]{git}) states that if $G$ is reductive,
then for any element $g\in G(K)$, there exist
$a,b\in G(R)$ and a one-parameter subgroup $\lambda\in {\rm Hom}(\mathbb{G}_m, G)$ such that
$g=a\cdot \lambda\vert_K\cdot b$,
where $\lambda\vert_K$ denotes the composition $\Spec(K) \to \mathbb{G}_m =\Spec(k[\pi]_{\pi}) \overset{\lambda}{\to} G$.  If we let  $\Lambda_G$ denote  the set of $K$-points induced by one-parameter subgroups of $G$, then Iwahori's theorem states that if $G$ is reductive, then
$$
G(K)=G(R)\Lambda_GG(R).
$$
The following  argument, which states that the converse also holds, was  communicated to us by Jun Yu.
 See \cite{AHH19} for a proof using Artin stacks and S-completeness.
 
\begin{prop}\label{l-iwahori}
Let $G$ be a linear algebraic group. If $G(K) = G(R)\Lambda_GG(R)$,
then $G$ is reductive. 
\end{prop}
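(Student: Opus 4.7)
The plan is to prove the contrapositive: if $G$ has a nontrivial unipotent radical $U$, I would exhibit an element of $G(K)$ that is not of the form $a\cdot\lambda\vert_K\cdot b$ with $a,b\in G(R)$ and $\lambda\in\Lambda_G$.

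First I would select the test element. Because $U$ is a nontrivial connected unipotent group in characteristic zero, a composition series with $\mathbb{G}_a$-quotients shows $U(K)\supsetneq U(R)$, so I fix some $u\in U(K)\setminus U(R)$ and assume for contradiction that $u=a\cdot\lambda\vert_K\cdot b$ for some $a,b\in G(R)$ and one-parameter subgroup $\lambda\colon\mathbb{G}_m\to G$. Projecting by the quotient map $p\colon G\to G/U$ and using that $p(u)=1$ in $(G/U)(K)$, I obtain $\bar\lambda\vert_K=\bar b^{-1}\bar a^{-1}\in (G/U)(R)$, where $\bar\lambda:=p\circ\lambda\colon\mathbb{G}_m\to G/U$.

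The crux is then to show that for any linear algebraic group $H$, a one-parameter subgroup $\mu\colon\mathbb{G}_m\to H$ whose restriction $\mu\vert_K$ lies in $H(R)$ must be trivial. I would handle this by fixing a closed embedding $H\hookrightarrow\operatorname{GL}_N$ and diagonalizing the composition, so that in suitable coordinates $\mu(t)=\operatorname{diag}(t^{a_1},\ldots,t^{a_N})$; the entries of $\mu\vert_K=\operatorname{diag}(\pi^{a_1},\ldots,\pi^{a_N})$ lie in $R$ while its determinant $\pi^{\sum a_i}$ lies in $R^\times$, which together force all $a_i=0$. Applied with $H=G/U$, this makes $\bar\lambda$ trivial, so $\lambda$ factors through the kernel $U$; and any morphism from $\mathbb{G}_m$ into a unipotent group is trivial (its image would be simultaneously semisimple and unipotent), so $\lambda=1$. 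Thus $u=ab\in G(R)\cap U(K)=U(R)$, contradicting the choice of $u$. I expect the only real obstacle to be this valuation/diagonalization step, which encodes the general principle that nontrivial cocharacters of a linear algebraic group have nontrivial image in $G(K)/G(R)$; everything else is routine manipulation of the hypothesis $G(K)=G(R)\Lambda_G G(R)$.
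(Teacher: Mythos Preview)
Your argument is correct and takes a genuinely different route from the paper's. The paper works globally with the Levi decomposition $G = G_u \rtimes G_s$: it shows that every one-parameter subgroup of $G$ is conjugate into $G_s$, then applies the forward direction of Iwahori's theorem to the reductive Levi $G_s$ to rewrite $G(R)\Lambda_G G(R)$ as $G_u(R)\,G_s(K)\,G_u(R)$; intersecting with $G_u(K)$ forces $G_u(K)=G_u(R)$, which is impossible unless $G_u$ is trivial because $G_u\cong\mathbb{A}^n$ as a variety. You instead work elementwise with the quotient $p\colon G\to G/U$: projecting a hypothetical decomposition $u=a\,\lambda|_K\,b$ of some $u\in U(K)\setminus U(R)$ shows $\bar\lambda(\pi)\in(G/U)(R)$, and your valuation/diagonalization lemma (that a cocharacter $\mu$ of any linear algebraic group with $\mu(\pi)\in H(R)$ is trivial) then kills $\bar\lambda$, hence $\lambda$, hence forces $u\in G(R)\cap U(K)=U(R)$. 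Your approach is a bit more elementary---it needs neither a Levi splitting nor the nontrivial forward Iwahori decomposition for $G_s$---and isolates a clean standalone lemma about cocharacters. The paper's approach, on the other hand, makes the structural reason transparent: the entire double-coset set $G(R)\Lambda_G G(R)$ is sandwiched between $G_u(R)$-cosets, so it can never reach $G_u(K)$. (Minor note: your displayed formula should read $\bar\lambda|_K=\bar a^{-1}\bar b^{-1}$ rather than $\bar b^{-1}\bar a^{-1}$, but this does not affect the argument.)
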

\begin{proof}
Write $G = G_u \rtimes G_s$ for the Levi decomposition of $G$. That means, $G_u$ is the
unipotent radical of $G$, which is a (connected) unipotent group over $k$, and $G_s$ is
a reductive group over $k$; the map $(x; y) \to xy$ where $(x\in G_u, y\in G_s)$ gives a
bijection $G_u\times G_s \to G$.

For any one-parameter subgroup $\lambda\colon \mathbb{G}_m \to G$ defined over $k$, the image of $\lambda$ consists of semisimple elements. 
Thus, it is contained in a conjugate of $G_s$. That means, there exists $g \in G(k)$ such that ${\rm Ad}(g)(\lambda)$ has image lying in $G_s$, or in other words 
${\rm Ad}(g)\cdot \lambda \in \Lambda_{G_s}$. Therefore,
$$\Lambda_G = {\rm Ad}(G(k))(\Lambda_{G_s}) \subset G(k)\Lambda_{G_s}G(k).$$
Since $G = G_u \rtimes G_s$, we get
$$ G(R) = G_u(R) \rtimes G_s(R) = G_u(R)G_s(R) = G_s(R)G_u(R).$$
Combining the above, we get
\begin{eqnarray*}
G(R)\Lambda_GG(R)&\subset&G(R)G(k)\Lambda_{G_s}G(k)G(R) \\
 &=&G(R)\Lambda_{G_s}G(R)\\
&=& G_u(R)G_s(R)\Lambda_{G_s}G_s(R)G_u(R)\\
&=& G_u(R)G_s(K)G_u(R),
\end{eqnarray*}
where we used Iwahori's theorem to $G_s$ in the last equality.
Thus, $G(R)\Lambda_GG(R) = G_u(R)G_s(K)G_u(R).$

Suppose 
$$G(K) = G(R)\Lambda_GG(R) = G_u(R)G_s(K)G_u(R).$$ Then,
$$G_u(K)=\big(G_u(R)G_s(K)G_u(R)\big)\cap G_u(K)=G_u(R)\big(G_s(K)\cap G_u(K)\big)G_u(R)=G_u(R).$$
Since $G_u$ is a connected unipotent group, we have   $G_u \cong \mathbb{A}^n$ as a variety over $k$, where $n = \dim G_u$. Then,
$G_u(K) = G_u(R)$ implies that $K^n = R^n$, which 
in turn implies that $n=0$ and that $G$ is reductive.
\end{proof}

The following lemma will allow us to work with essentially finite type DVRs when checking 
that the hypotheses of Proposition \ref{l-iwahori} are satisfied. 

\begin{lem} \label{l-artin-approx}
If $G$ is a linear algebraic group, then for any $g\in G(K)$,  there is an algebraic point $g_0$ such that $g \cdot g_0^{-1} \in G(R)$, where algebraic means that $g_0 \in G(k(C))$ for the function field $k(C)$ of a smooth curve over $k$ embedded in $K$ via a dominant morphism $\Spec(R) \to C$.
\end{lem}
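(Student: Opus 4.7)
The statement is an Artin-approximation result, asserting that every $K$-point of $G$ lies in the same right $G(R)$-coset as an algebraic point. My plan is to approximate $g$ in the $\pi$-adic topology by such an algebraic point, and then conclude using the openness of $G(R) \subset G(K)$ together with the continuity of multiplication and inversion in the $\pi$-adic topology. The obstacle to a direct application of Artin approximation is that it produces $R$-points of $k$-schemes, whereas $g$ is only a $K$-point; I get around this with an initial rescaling.

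Fix a closed embedding $G \hookrightarrow \bA^N$ with defining polynomials $f_1, \ldots, f_m \in k[x_1, \ldots, x_N]$ of degrees $d_1, \ldots, d_m$, and write $g = (g_1, \ldots, g_N) \in K^N$. Choose $M \geq 0$ so that $g_i' := \pi^M g_i \in R$ for every $i$. The rescaled polynomials $\tilde f_i(y) := \pi^{d_i M} f_i(\pi^{-M} y) \in k[\pi][y_1, \ldots, y_N]$ cut out a finite-type $k[\pi]$-scheme $G' \subset \bA^N_{k[\pi]}$ whose generic fiber is identified with $G_K$ via the scaling $y = \pi^M x$, and $g' := (g_1', \ldots, g_N') \in G'(R)$. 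Applying Artin's approximation theorem to $G'$ over the Henselization $R_0 := k[\pi]_{(\pi)}^h$ produces, for any prescribed $N' \geq 0$, an element $g_0' \in G'(R_0)$ with $g_0' \equiv g' \pmod{\pi^{N'}}$. Since $R_0 = \varinjlim \cO_{C,c}$ over smooth pointed $k$-curves $(C,c)$ étale over $0 \in \Spec(k[\pi])$, and $g_0'$ has only finitely many coordinates, I may take $g_0' \in \cO_{C,c}^N$ for a single such $(C,c)$; note that $\widehat{\cO_{C,c}} \cong R$ and $\Spec R \to C$ is dominant. Setting $g_0 := \pi^{-M} g_0' \in k(C)^N$, the identity $f_i(g_0) = \pi^{-d_i M} \tilde f_i(g_0') = 0$ places $g_0 \in G(k(C))$.

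It remains to choose $N'$ large enough to guarantee $g \cdot g_0^{-1} \in G(R)$. By construction $g - g_0 = \pi^{-M}(g' - g_0') \in \pi^{N' - M} R^N \subset K^N$, so $g_0$ is arbitrarily $\pi$-adically close to $g$ as $N'$ grows. After further embedding $G \hookrightarrow GL_n$ for some $n$, the entries of $g \cdot g_0^{-1}$ are polynomials in the entries of $g$ and $g_0^{-1}$, with $g_0^{-1}$ expressed via the adjugate of $g_0$ divided by $\det g_0$. Since $G(R) = G(K) \cap R^{n^2+1} \subset G(K)$ is an open subset in the $\pi$-adic topology, and the rational map $(x,y) \mapsto xy^{-1}$ is continuous on $G(K) \times G(K)$ (inversion being continuous on the locus where the determinant is nonzero), for $N'$ sufficiently large $g \cdot g_0^{-1}$ falls within any preassigned open neighborhood of the identity, in particular within $G(R)$. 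The main obstacle is this last continuity estimate: one must track the valuations produced by inversion and multiplication in $GL_n$ in terms of $M$, the pole orders of $g$ and of $g^{-1}$, and the defining equations, but this is elementary bookkeeping once the embedding is fixed.
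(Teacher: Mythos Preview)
Your proof is correct and follows the same strategy as the paper: Artin-approximate $g$ by an algebraic $g_0$, then use $\pi$-adic closeness to force $gg_0^{-1}\in G(R)$. The paper works in a single embedding $G\subset GL_m$ from the start and replaces your continuity argument by the explicit geometric series $gg_0^{-1}=(1-(g-g_0)g^{-1})^{-1}=\sum_{i\geq 0}((g-g_0)g^{-1})^i$, which visibly lies in $M_{m\times m}(R)$ once $(g-g_0)g^{-1}\in\pi M_{m\times m}(R)$; conversely, your rescaling trick is more careful than the paper about how Artin approximation applies to a $K$-point rather than an $R$-point.
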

\begin{proof}
Fix an embedding $G \subset {\rm GL}_m$ for some $m>0$ 
and $N\gg0 $ so that $\pi^N\cdot g^{-1}\in 
M_{m\times m}(R)$.
 By Artin approximation (\cite{Art69}), we can find an algebraic point $g_0\in G(K)$ such that $g-g_0\in \pi^{N+1}\cdot {M}_{m\times m}(R)$. 
Since
$g g_0^{-1}=\left(1- \frac{g-g_0}{g}\right)^{-1}$
and $\frac{g-g_0}{g} \in \pi  \cdot  {M}_{m\times m}(R)$, we know 
$$g\cdot g_0^{-1}=1+\sum_{i=1}^{\infty}
\left( \frac{ g_0 -g}{g} \right)^i \in {\rm GL}_m(R) \cap G(K) = G(R).$$
\end{proof}

\subsection{Proof of reductivity}

Theorem \ref{t-aut} is an immediate consequence of Lemma \ref{l-iwahori} and the following proposition.

\begin{prop} \label{P:proof-reductivity}
If $(X,D)$ is a K-polystable log Fano pair, then $G:=\Aut(X,D)$ satisfies  $G(K)=G(R)\Lambda_GG(R)$. 
\end{prop}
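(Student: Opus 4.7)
We first reduce to the case where $R$ is essentially of finite type over $k$. By Lemma \ref{l-artin-approx}, we may write $g = r \cdot g_0$ with $r \in G(R)$ and $g_0 \in G(k(C))$ for a smooth curve $C$ equipped with a dominant map $\Spec R \to C$; setting $R' = \mathcal{O}_{C, p}$ (with $p$ the image of the closed point) yields an essentially of finite type DVR with $R' \hookrightarrow R$, and it suffices to decompose $g_0 \in G(\mathrm{Frac}\,R')$. Assuming from now on that $R$ is essentially of finite type, we glue two copies of the trivial family $X_R \to \Spec R$ along the $\mathbb{G}_m$-equivariant extension of $g$ as in Section \ref{ss-spolar}, and apply Proposition \ref{p-scompKps} to extend the resulting family over $\Spec(R[s,t]/(st-\pi)) \setminus 0$ uniquely to a $\mathbb{G}_m$-equivariant $\mathbb{Q}$-Gorenstein family $(\tilde\cX, \tilde\cD) \to \Spec(R[s,t]/(st-\pi))$ with central fiber $(X, D)$. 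The induced $\mathbb{G}_m$-action on this central fiber defines a one-parameter subgroup $\lambda \colon \mathbb{G}_m \to G$---the $\Lambda_G$-piece of the desired decomposition.

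By K-polystability, the restrictions $(\tilde\cX, \tilde\cD)|_{s=0}$ and $(\tilde\cX, \tilde\cD)|_{t=0}$, being special test configurations of $(X, D)$ with K-semistable central fiber, are trivial as $\mathbb{Q}$-Gorenstein families; fix $\mathbb{G}_m$-equivariant isomorphisms $\psi_s$ and $\psi_t$ to $(X \times \mathbb{A}^1, D \times \mathbb{A}^1)$ carrying the diagonal $\mathbb{G}_m$-action via $\lambda$ on $X$ and scaling on $\mathbb{A}^1$. Let $\phi_s, \phi_t$ denote the natural trivializations of $\tilde\cX$ on $\{s \neq 0\}$ and $\{t \neq 0\}$ (with trivial action on $X$) coming from the gluing, so that $\phi_t \circ \phi_s^{-1} = g$ on the generic fiber. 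Comparing $\psi_s$ with $\phi_t$ on the overlap $\{s = 0, t \neq 0\} \cong \mathbb{G}_{m, k}$, equivariance forces the comparison (after normalizing $\psi_s$ by an element of $G(k)$) to equal the morphism $t \mapsto \lambda(t)^{-1}$; this extends globally to $\{t \neq 0\} = \Spec R[t, t^{-1}]$, producing a modified trivialization $\tilde\phi_t \colon \tilde\cX|_{\{t \neq 0\}} \simeq X \times \{t \neq 0\}$ with $\lambda$-action, compatible with $\psi_s$ on $\{s = 0\}$. Analogously, a suitable normalization of $\psi_t$ yields $\tilde\phi_s$ on $\{s \neq 0\}$.

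The last step---and the anticipated main obstacle---is to extract the decomposition $g = a \cdot \lambda(\pi) \cdot b$ with $a, b \in G(R)$ from these modified trivializations. A direct computation using $\phi_t \circ \phi_s^{-1} = g$ shows the transition $\tilde\phi_t \circ \tilde\phi_s^{-1}$ on the overlap has the form $s \mapsto \lambda(s) \cdot \lambda(\pi)^{-1} g \cdot \lambda(s)^{-1}$, i.e., the $\mathrm{Ad}(\lambda(s))$-twist of the constant $\lambda(\pi)^{-1} g \in G(K)$. The existence of $a, b \in G(R)$ with $g = a \lambda(\pi) b$ is then equivalent to exhibiting a global $\mathbb{G}_m$-equivariant isomorphism $(\tilde\cX, \tilde\cD) \simeq (X \times \overline{\ST}_R, D \times \overline{\ST}_R)$ (with diagonal $\lambda$-action on the right), which amounts to absorbing the transition into chart-wise modifications by $G(R)$-valued equivariant automorphisms. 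The main subtleties are (i) choosing the $G(k)$-normalizations of $\psi_s$ and $\psi_t$ compatibly at the origin $\{s = t = 0\}$---where their restrictions to the central fiber must agree up to an element of the centralizer $Z_G(\lambda)$---and (ii) verifying, via the uniqueness clause of Proposition \ref{p-scompKps} applied to the candidate $\lambda$-twisted product family, that the required global isomorphism indeed exists.
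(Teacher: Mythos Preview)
Your setup matches the paper's proof exactly through the identification of $\lambda$: reduce to an essentially finite type DVR, glue, extend via Proposition \ref{p-scompKps}, and read off the one-parameter subgroup from the $\mathbb{G}_m$-action on the central fiber. The divergence---and the gap---is in the final step.

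Your proposed patching of $\tilde\phi_s$ and $\tilde\phi_t$ does not close. As you yourself compute, the transition on $\{s\neq 0\}\cap\{t\neq 0\}$ is the $\mathrm{Ad}(\lambda(s))$-twist of $\lambda(\pi)^{-1}g$, which is not the identity; so the two charts do not glue to a global equivariant isomorphism with the $\lambda$-twisted product. Your subtlety (ii) then appeals to the uniqueness clause of Proposition \ref{p-scompKps}, but that clause only says an extension from $\cS\setminus 0$ to $\cS$ is unique---it cannot manufacture an isomorphism over $\cS\setminus 0$ that is not already there. The $\lambda$-twisted product, restricted to $\cS\setminus 0$, is two trivial families glued along $\lambda(\pi)$, whereas $(\tilde\cX,\tilde\cD)|_{\cS\setminus 0}$ is glued along $g$; asserting these agree over $\cS\setminus 0$ is exactly the double-coset statement you are trying to prove, so the argument is circular.

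The paper supplies the missing idea: after obtaining $\lambda$, base-change back to the \emph{complete} DVR $R=k[\![\pi]\!]$, and consider the Isom scheme $\cI=\Isom_{\cS}((\tilde\cX,\tilde\cD),(X_\cS,D_\cS))$. Because every geometric fiber is isomorphic to $(X,D)$ and the boundary components are flat with pure fibers, $\cI\to\cS$ is a $G$-torsor, hence smooth. The tautological equivariant section over $0$ then lifts, by formal smoothness, to compatible equivariant sections over all infinitesimal thickenings $\cS_n$; completeness of $R$ lets these algebraize to a global $\mathbb{G}_m$-equivariant section $\cS\to\cI$. Restricting the resulting isomorphism to $(\cS\setminus 0)/\mathbb{G}_m$ yields $a,b\in G(R)$ with $a\cdot g=\lambda(\pi)\cdot b$. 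Note in particular that you never use the completeness of $R$, whereas the paper uses it essentially in the algebraization step.
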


\begin{proof}
Set $R = k[[\pi]]$ and $K = k((\pi))$.  By Lemma \ref{l-artin-approx}, it suffices to show that all algebraic points of $G(K)$ are contained in $G(R)\Lambda_GG(R)$. 
To proceed, fix a smooth pointed curve $x \in C$ with local ring $R_0 : = \cO_{C,x}$, function field $K_0: = {\rm Frac}( \cO_{C,x})$, and an extension of DVRs $R_0 \subset R$.
 We will show that  if $g\in G(K_0)$, then $g \vert_K \in G(R)\Lambda_GG(R)$.

Consider the isomorphism  $(X_{K_0}, D_{K_0}) \to (X_{K_0}, D_{K_0})$ of log Fano pairs induced 
by $g$. 
This data gives a $\mathbb{G}_m$-equivariant $\Q$-Gorenstein family of log Fano pairs 
\[
(\cX, \cD) \to \Spec \left(R_0[s,t]/(st-\pi) \right) \setminus 0 
\]
and we may write $\cD = \sum a \cB_a$.  
By Proposition \ref{p-scompKps},  the above family extends to a  $\mathbb{G}_m$-equivariant $\Q$-Gorenstein family of log Fano pairs  $( \tilde{\cX}, \tilde{\cD}) \to \Spec(R_0[s,t]/(st-\pi))$
such that $( \tilde{\cX}_{0}, \tilde{\cD}_{0}) \simeq (X,D)$.  Moreover, $\tilde{\cD} = \sum a \tilde{\cB}_a$ where each $\tilde{\cB}_a$ is flat over $\Spec(R_0[s,t]/(st-\pi) )$ with  pure fibers.
The $\mathbb{G}_m$-action on the fiber  $( \tilde{\cX}_{0}, \tilde{\cD}_{0})$ induces a 1-parameter subgroup $\lambda: \mathbb{G}_m \to G$.

Replace $( \tilde{\cX}, \tilde{\cD})$ with its base change by $R$ to get a family over $\cS := \Spec \left(R[s,t]/(st-\pi) \right)$. We will show that there is a $\bG_m$-equivariant isomorphism $( \tilde{\cX}, \tilde{\cD}) \cong (X_{\cS}, D_{\cS})$, where $\bG_m$ acts on $(X_{\cS}, D_{\cS}) = (X \times \cS, D \times \cS)$ diagonally, via $\lambda$ on the left factor and the standard action $\cS$. As every geometric fiber of the family $( \tilde{\cX}, \tilde{\cD}) \to \Spec(R)$ is isomorphic to the base change of $(X,D)$ and since each $\tilde{\cB}_a$ is flat over $\cS$, the scheme
\[
\cI := \Isom_{\cS}((\tilde{\cX} , \tilde{\cD}) , (X_{\cS}, D_{\cS}))
\]
parameterizing isomorphisms is a $G$-torsor over $\cS$ (c.f. \cite[Lemma 2.3.2]{dejong-starr}). For any test scheme $T$, a $T$-point of $\cI$ consists of a point $p \in \cS(T)$ along with an isomorphism $\phi : (\tilde{\cX}_p, \tilde{\cD}_p) \cong (X_T,D_T)$ of families over $T$. The $\bG_m$-action on both pairs gives a $\bG_m$-action on $\cI$, where for any test scheme $T$, a $T$-point $t \in \bG_m(T)$ acts on $\cI(T)$ by
\[
t \cdot(p,\phi) = \left(t \cdot p, \lambda(t) \cdot \phi(t^{-1} \cdot (-)) : \tilde{\cX}_{t\cdot p} \cong X_{T}\right).
\]
Note that the projection $\cI \to \cS$ is $\bG_m$-equivariant, and a $\bG_m$-equivariant section of this morphism classifies a $\bG_m$-equivariant isomorphism of families $(\tilde{\cX} , \tilde{\cD}) \cong (X_{\cS}, D_{\cS})$ over $\cS$.


The projection $\cI \to \cS$ is smooth because it is a principal $G$-bundle and $G$ is smooth. Let $\cS_n$ be the $n$th nilpotent thickening of $0 \in \cS$.  By construction, we have a $\bG_m$-equivariant section $s_0 \co \cS_0 \to \cI$. By the formal lifting criteria for smoothness, $s_0$ extends to a compatible family of  $\bG_m$-equivariant sections $s_n \co \cS_n \to \cI$. We claim that the sections $s_n$ algebraize to a $\bG_m$-equivariant section $s \co \cS \to \cI$.   The $\bG_m$-actions induce $\bZ$-gradings  $\Gamma(\cO_\cS) = \bigoplus_d \Gamma(\cO_\cS)_d$,  $\Gamma(\cO_{\cS_n}) = \bigoplus_d \Gamma(\cO_{\cS_n})_d$ and $\Gamma(\cO_\cI) = \bigoplus_d \Gamma(\cO_\cI)_d$.
To prove the existence of the desired section $s \co \cS \to \cI$, it suffices to verify the existence of a graded homomorphism $\Gamma(\cO_\cI) \to \Gamma(\cO_\cS)$ extending the given homomorphisms $\Gamma(\cO_\cI) \to \Gamma(\cO_{\cS_n})$. To see this, observe that for each $d$, the compatible maps $\Gamma(\cO_\cI)_d \to \Gamma(\cO_{\cS_n})_d$ extend to a map $\Gamma(\cO_\cI)_d \to \ilim_n \Gamma(\cO_{\cS_n})_d$. The latter $R$-module can be explicitly computed to be isomorphic to $\Gamma(\cO_\cS)_d$ since $R$ is complete.

To conclude, let $\phi : (\tilde{\cX},\tilde{\cD}) \cong (X_\cS,D_\cS)$ be the $\bG_m$-equivariant isomorphism constructed in the previous paragraph. Restricting to $\cS \setminus 0$ and quotienting by the $\bG_m$-action, $\phi$ gives an isomorphism between two families over $\Spec(R) \cup_{\Spec(K)} \Spec(R)$. Each family was obtained by gluing two copies of the trivial family along an isomorphism over $\Spec(K)$, the first family corresponding to $g \in G(K)$ and the second to $\lambda(\pi) \in G(K)$. Thus $\phi\vert_{(\cS \setminus 0)/\bG_m}$ corresponds to a pair $a,b \in G(R)$ such that $a \cdot g = \lambda(\pi) \cdot b$, and hence $g = a^{-1} \cdot  \lambda(\pi) \cdot b \in G(R)\Lambda_GG(R)$.

\end{proof}

\section{$\Theta$-reductivity}\label{s-theta}


In this section, we will carry out an analysis similar to that in Section \ref{s-Scomplete}  for $\Theta$-reductivity.

\subsection{$\Theta$-reductivity for coherent sheaves}  
Let $R$ be a DVR with fraction field $K$, residue field $\kappa$ and uniformizing parameter $\pi$.  Recall that $\Theta = [\bA^1/\bG_m]$ and that $\Theta_R = \Theta \times \Spec(R) =  [\Spec(R[x]) / \bG_m]$, where $x$ has weight $-1$.  The reader may wish to refer to the schematic picture \eqref{E:ThetaR-schematic}  of  $\Theta_R$.   In this subsection, we establish $\Theta$-reductivity for the stack parameterizing coherent sheaves on $\Spec(k)$ or, in other words, that every flat and coherent sheaf on $\Theta_R \setminus 0$ extends uniquely to a flat and coherent sheaf on $\Theta_R$.

A quasi-coherent sheaf $F$ on $\Theta_R$ corresponds to a $\bG_m$-equivariant quasi-coherent sheaf on $\Spec(R[x])$ or, in other words, 
a $\bZ$-graded $R[x]$-module $\bigoplus_{p \in \bZ} F_p$; this in turn corresponds to a diagram $ \cdots \xrightarrow{x} F_{p+1} \xrightarrow{x} F_p \xrightarrow{x} F_{p-1} \xrightarrow{x} \cdots$ of $R$-modules.   The restriction of $F$ to $\Spec(R) \xhookrightarrow{x \neq 0} \Theta_R$ is the $R$-module $\colim F_p$ and the restriction to $\Theta_{\kappa} \xhookrightarrow{\pi= 0} \Theta_R$ is the $\bZ$-graded $\kappa$-module $\bigoplus_{p \in \bZ} F_p/\pi F_{p}$.  Moreover, $F$ is flat and coherent  over $\Theta_R$ if and only if each $F_p$ is flat and coherent over $R$, the maps $x \co F_{p+1} \to F_{p}$ are injective, each $F_p/F_{p+1}$ is flat, $F_p = 0$ for $p \gg 0$, and $F_p$ stabilize for $p \ll 0$.

We will compute the pushforward  along the open immersion $j \co \Theta_R \setminus 0 \hookrightarrow \Theta_R$.  Denote the open immersions by 
$$\begin{aligned}
j_x \co \Spec(R) \xhookrightarrow{x \neq 0}  \Theta_R, \,
j_{\pi} \co \Theta_K \xhookrightarrow{\pi \neq 0} \Theta_R \mbox{ \, and \,}
j_{x\pi} \co \Spec(K) \xhookrightarrow{x\pi \neq 0}  \Theta_R.
\end{aligned}$$
  Let $\cE$ be a flat coherent sheaf on $\Theta_R \setminus 0$; this corresponds to 	a free $R$-module $E$ of finite rank and a $\bZ$-filtration $\cG^{\bullet}E_K \co \cdots \subset \cG^{p+1}E_{K}  \subset \cG^{p}E_{K}  \subset \cdots $ of $E_K$.   Then 
 {$j_\ast \cE =  (j_x)_\ast E \cap (j_\pi)_\ast \cG^{\bullet} E_{K} \subset (j_{x \pi})_\ast E_K.$}
As morphisms of graded $R[x]$-modules, $j_x$ and $j_{\pi}$ correspond to the inclusions $R[x] \subset R[x]_x$ and $R[x] \subset K[x]$, and $j_{x\pi}$ corresponds to $R[x] \subset K[x]_x$.  We 
 compute that
$$\begin{aligned}
	(j_{x \pi})_\ast E_K	& \cong K[x]_x \otimes_R E_K \cong \bigoplus_{p \in \bZ} E_K x^{-p}, \\
	(j_{x})_\ast E 		& \cong E \otimes_R R[x]_x \cong \bigoplus_{p \in \bZ} E x^{-p}  \subset (j_{x \pi})_\ast E_K,  \\
	(j_{\pi})_\ast \cG^{\bullet} E_{K} & \cong \bigoplus_{p \in \bZ} (\cG^p E_{K}) x^{-p} \subset (j_{x\pi})_\ast E_K
\end{aligned}$$
Therefore
\begin{equation} \label{E:push-forward2}
j_\ast \cE \cong \bigoplus_{p \in \bZ} \big( E \cap \cG^p E_{K} \big) x^{-p} \subset \bigoplus_{p\in \bZ} E_K x^{-p}.
\end{equation}
The sheaf $j_\ast \cE $ is flat and coherent over $\Theta_R$, and is given by the filtration $\cG^pE := E \cap \cG^p E_{K}$ of $E$.

\subsection{$\Theta$-reductivity for polarized families}\label{ss-tpolar} 
A polarized family $(\cX, \cL)$ over $\Theta_R \setminus 0$ corresponds to a polarized family $(X,L)$ over $\Spec(R)$ and a polarized family $(\cX_K,\cL_K)$ over $\Theta_K$ together with an isomorphism of $(X_K, L_K)$ with the fiber of $(\cX_K, \cL_K)$ over $1$.  The polarized family $(\cX_K,\cL_K)$ over $\Theta_K$ corresponds to a test configuration over $\bA^1_K$.   

Consider the composition $\cX \xrightarrow{q} \Theta_R \setminus 0 \xrightarrow{j} \Theta_R$. 
\begin{condition}[Finite Generation Condition]\label{a-finitegenerate2}
The  $\oh_{\Theta_R}$-algebra   $\bigoplus_{m \ge 0} j_* q_* \oh_{\cX}(m \cL)$  is finitely generated.
\end{condition}

If Condition \ref{a-finitegenerate2} holds, then
$$\tilde{\cX} := \uProj_{\, \Theta_R} \bigoplus_{m \ge 0} j_* q_* \oh_{\cX}(m\cL),$$
is a flat family of polarized schemes over $\Theta_R$. 

For each $m \ge 0$, set $V_m :=H^0(X, \oh_X(mL)) $.   For each $m \ge 0$, the vector space $V_{K,m}:=H^0(\cX_K,\oh_{\cX_K}(m\cL_K))$ inherits a $\bZ$-filtration $\cG^{\bullet} V_{K,m}$. Equation \eqref{E:push-forward2} yields
$$\begin{aligned}
j_* q_* \oh_{\cX}(m\cL) &\cong \bigoplus_{p \in \bZ} \big(V_m \cap \cG^p V_{K,m} \big)x^{-p}  \subset \bigoplus_{p \in \bZ} V_{K,m} x^{-p}.
\end{aligned}$$

If we set $\cG^pV_{m} = V_m \cap \cG^p V_{K,m}$, then the direct sum $\bigoplus_{p,m} \cG^pV_{m}$ is a bigraded $R[x]$-module, where multiplication by $x$ is given by the inclusions $\cG^pV_{m} \to \cG^{p-1}V_{m}$. 
  The grading in $m$ defines a projective scheme $\cP = \uProj_{\Spec(R[x])} \bigoplus_{p,m} \cG^pV_{m}$ and the grading in $p$ gives an action of $\bG_m$ on $\cP$ and a linearization of $\oh_{\cP}(1)$.  
  Observe that $(\tilde{\cX}, \oh_{\tilde{\cX}}(1))= ([\cP / \bG_m], \oh_{\cP}(1))$.


\subsection{$\Theta$-reductivity for K-semistable log Fano pairs}
In this section, we will verify that $\kkX_{V,n}^{{\rm Kss}}$ satisfies the valuative criterion for 
$\Theta$-reductivity over any essentially finite type DVR.
The result follows from modifying an argument in \cite[Sect. 3]{LWX18}.

\medskip

Fix the following notation: Let $R$ be a DVR essentially of finite type over $k$ 
with fraction field $K$ and residue field $\kappa$. We will write $x$ for the parameter of $\mathbb{A}^1$. 
To avoid confusion, we write $0_K \in \mathbb{A}^1_K$ for the closed point defined by the vanishing of $x$ 
and $0 \in \mathbb{A}^1_R$ for the one defined
by the vanishing of $x$ and a uniformizing parameter $\pi \in R$.

Fix  a $\Q$-Gorenstein family of log Fano pairs $(X,D) \to \Spec(R)$ 
and a special test configuration $(\cX_K,\cD_K) \to \mathbb{A}^1_K$ of $(X_K,D_K)$.  
Following Section \ref{ss-tpolar}
, this data gives a $\mathbb{G}_m$-equivariant $\Q$-Gorenstein family of log Fano pairs 
\[
(\cX , \cD ) \to   \mathbb{A}^1_R  \,\setminus\,  0\, 
.\]

\begin{thm}\label{t-Thetared}
If the geometric fibers of $(X,D) \to \Spec(R)$ and $(\cX_K, \cD_K) \to \mathbb{A}^1_K$ are K-semistable, 
then $(\cX , \cD ) \to \mathbb{A}^1_R \setminus\, 0\,$ 
extends  uniquely to a $\mathbb{G}_m$-equivariant $\Q$-Gorenstein family of log Fano pairs 
\[
(\tilde{\cX}, \tilde{\cD}) \to \mathbb{A}^1_R  
.\]
Furthermore, the geometric fiber over $0$ is  K-semistable.
\end{thm}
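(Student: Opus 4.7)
The strategy is to transplant the proof of Theorem \ref{t-scomplete} to the $\Theta$-setting via the framework of Section \ref{ss-tpolar}. Fix $r$ sufficiently divisible so that $L := -r(K_X+D)$ and the corresponding line bundle on $\cX_K$ are Cartier, let $V_m := H^0(X,\cO_X(mL))$, and let $\cG^{\bullet} V_{K,m}$ be the decreasing filtration of $V_{K,m} = H^0(X_K,\cO_{X_K}(mL_K))$ induced by the special test configuration $(\cX_K,\cD_K)$, i.e.\ the order of vanishing along the central-fibre divisor on a suitable $\mathbb{G}_m$-equivariant resolution. Define $\cG^p V_m := V_m \cap \cG^p V_{K,m}$ and consider the bigraded $R[x]$-algebra
\[
A := \bigoplus_{m\ge 0}\bigoplus_{p\in\Z} (\cG^p V_m)\,x^{-p}.
\]
By Section \ref{ss-tpolar}, if $A$ is finitely generated then $\tilde{\cX} := \uProj_{R[x]} A$ is a flat projective extension of $\cX$ over $\bA^1_R$, and we take $\tilde{\cD}$ to be the component-wise closure of the boundary divisor over $\bA^1_R \setminus 0$.

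The main step is finite generation of $A$, i.e.\ Condition \ref{a-finitegenerate2}. By Lemma \ref{L:filling_conditions} it suffices to check finite generation after restriction to the closed point $0 \in \bA^1_R$. To do so I would produce, via an MMP construction over $R$ parallel to the one underlying \cite[Sect.~5]{BX18} (using \cite{BCHM10}, which applies since $R$ is essentially of finite type over $k$), a normal birational model $Y \to X \times_R \bA^1_R$ together with a prime divisor $E \subset Y$ whose generic-fibre restriction cuts out $(\cX_K,\cD_K)$ and whose restriction modulo $\pi$ cuts out a special test configuration of $(X_\kappa,D_\kappa)$. An analogue of Proposition \ref{p-comparefilt} should identify $\cG^p V_m$ with the order-of-vanishing filtration along $E$ (up to a grading shift), and the restriction of $A$ to $\pi=0$ with the (shifted) coordinate ring of a special test configuration of the K-semistable pair $(X_\kappa,D_\kappa)$. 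Finite generation of this last ring is then an instance of the finite generation results of \cite{LWX18, BX18}.

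Granting finite generation, the verification that $(\tilde{\cX},\tilde{\cD}) \to \bA^1_R$ is a $\Q$-Gorenstein family of log Fano pairs proceeds in parallel to the end of the proof of Theorem \ref{t-scomplete}: $\tilde{\cX}_0$ is the central fibre of a special test configuration of $(X_\kappa,D_\kappa)$ and so is normal; no component of $\tilde{\cD}$ contains $\tilde{\cX}_0$ by a codimension argument analogous to Proposition \ref{p-cod1}; and $\Q$-Cartierness of $K_{\tilde{\cX}}+\tilde{\cD}$ follows because the fibre of $\bA^1_R$ over $\pi = 0$ is an irreducible Cartier divisor. Uniqueness of the extension is then automatic from Lemma \ref{L:filling_conditions}.

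To conclude K-semistability of the geometric fibre over $0$, I would apply Lemma \ref{l-sequivalent} to the special test configuration $\tilde{\cX}|_{\pi=0}$ of the K-semistable pair $(X_\kappa,D_\kappa)$, which reduces to checking that its Futaki invariant vanishes. Since the Futaki invariant is constant in flat $\Q$-Gorenstein families of special test configurations, it agrees with $\Fut(\cX_K,\cD_K)$, which in turn is zero: for a special test configuration of a K-semistable pair whose central fibre is also K-semistable, the induced $\mathbb{G}_m$-action lies in $\Aut$ of the central fibre and forces the Futaki invariant to vanish, as in \cite{LWX18}*{Sect.~3}. I expect the hardest step to be the finite generation, as it requires a relative MMP input generalizing the base-point setting of \cite{LWX18} to families over an essentially-of-finite-type DVR.
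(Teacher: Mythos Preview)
Your setup (the filtration $\cG^p V_m = V_m \cap \cG^p V_{K,m}$ and the algebra $A$) and the final K-semistability argument via $\Fut=0$ and Lemma \ref{l-sequivalent} match the paper exactly.  The paper also first observes, as you do implicitly, that because the special test configuration $(\cX_K,\cD_K)$ has K-semistable central fibre, its Futaki invariant is zero; in particular the filtration $\cG_K$ is (a shift of) the one induced by a divisor $E_K$ over $X_K$ with $\beta_{X_K,D_K}(E_K)=0$.

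The substantive divergence is in how finite generation and the $\Q$-Gorenstein log Fano property are obtained.  You propose an MMP construction on a model of $X\times_R\bA^1_R$ producing a divisor $E$ whose restriction to $\pi=0$ already ``cuts out a special test configuration of $(X_\kappa,D_\kappa)$,'' and then invoke finite generation over $\kappa$.  But that assertion is precisely what must be proven: nothing a priori forces the induced filtration of $V_\kappa$ to come from a divisor over $X_\kappa$, let alone one giving a \emph{special} degeneration with K-semistable centre.  Your use of Lemma \ref{L:filling_conditions} to reduce to $\kappa$, and your verification that $\tilde{\cX}_0$ is normal (``central fibre of a special test configuration''), both rely on this unproven claim, so the argument is circular at its key point.

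The paper's route, encapsulated in Proposition \ref{p-extendE}, avoids this by proving finite generation \emph{directly over $R$}.  It passes to the relative cone $(Y,\Gamma)\to\Spec(R)$, extends $E_K$ to a sequence of divisors $E_q$ over $Y$, and combines Li's derivative formula for $\hvol$, Liu's bound on normalized volume (using K-semistability of $(X_{\overline\kappa},D_{\overline\kappa})$), and ACC for log canonical thresholds to produce an extraction $Y_q\to Y$ with $(Y_q,\mu_*^{-1}(\Gamma+Y_\kappa)+E_q)$ lc.  Ampleness of $-E_q$ then gives finite generation, and the lc condition on the degenerated cone yields $(\tilde\cX,\tilde\cD+\tilde\cX_\kappa+\tilde\cX_{x=0})$ lc, from which $\Q$-Cartierness, the boundary condition, and normality of $\tilde\cX_\kappa$ (via Serre's criterion) all follow.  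Only \emph{after} this does one deduce, using \cite{LX14}, that the induced test configuration over $\overline\kappa$ is special.  So the special-test-configuration property over $\kappa$ is an output, not an input; your sketch treats it as the latter.
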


 Throughout the proof, we will use notation similar to that in Section \ref{s-vanishingfilt}. 
Specifically, fix a positive integer  $r$ such that $L:=-r(K_X+D) $ is a Cartier divisor. 
Let
$V:=  \bigoplus_m V_m$
denote the section ring of $X$ with respect to $L$. 
Recall that each $V_m$ is a flat $R$-module 
and the restrictions of $V$ to $\Spec(K)$ and $\Spec(\kappa)$, which we denote by 
$ V_{K} := \bigoplus_m V_{K,m} $ and $V_\kappa := \bigoplus_m V_{\kappa,m}$,
are isomorphic to the section rings of $L_K$ and $L_{\kappa}$, respectively.

\subsubsection{Extending filtrations defined by a divisor}
Let $E_K$ be a divisor over $X_K$ and write $A: = A_{X_K,D_K}(E_K)$.  
Setting
$$\cF_K^p V_{K,m}: = \{ f \in V_{K,m} \, \vert \, \ord_{E_K}(f) \geq p \},$$
for each $p \in \Z$ and $m \in \N$, 
gives a filtration of $V_{K}$.
The filtration $\cF^{\bullet}_{K}$ of $V_{K,m}$ extends to a filtration $\cF^\bullet$ of $V_{m}$ by subbundles by setting
\[
\cF^{p} V_m : = \cF_{K}^p V_{K,m} \cap V_m .\]
Note that
$
 \bigoplus_{m } \bigoplus_{p } \left( \cF^p V_m \right) x^{-p} 
$
is a graded $R[x]$-algebra. 

If the above algebra is finitely generated, we  set
 $\widetilde{\cX}:= \underline{\Proj}_{\mathbb{A}^1_R} \left(  \bigoplus_{m} \bigoplus_{p} \left( \cF^p V_m \right) x^{-p} \right)$.
Since
$$ \bigoplus_{m \in \N} \bigoplus_{p \in \Z} \left( \cF^p V_m \right) x^{-p} 
\otimes_{R[x]} R[x,x^{-1}] \simeq  V \bigotimes_R R[x,x^{-1}]
 $$
there is an isomorphism $X \times  (\mathbb{A}^1 \setminus 0 ) \simeq \widetilde{\cX}_{x\neq 0 }$.
We write $\widetilde{\cD}$ for the closure of $D \times (\mathbb{A}^1 \setminus 0)$ under the previous  embedding 
$X \times (\mathbb{A}^1 \setminus 0 ) \hookrightarrow \widetilde{\cX}$.


\begin{prop}\label{p-extendE}
If the geometric fibers of $(X,D) \to \Spec(R)$ are K-semistable
and $\beta_{X_K,D_K}(E_K) =0$, then:
\begin{enumerate}
\item The $R[x]$-algebra $\bigoplus_{m} \bigoplus_{p} \left( \cF^p V_m \right) x^{-p} $ is of finite type. 
\item The induced family $(\cX,\cD) \to \mathbb{A}^1_R$ is a $\Q$-Gorenstein family of log Fano pairs and $(\cX_{\overline{0}},\cD_{\overline{0}})$ is  K-semistable.
\end{enumerate}
\end{prop}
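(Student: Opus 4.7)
The plan is to follow the strategy in \cite[Sect.~3]{LWX18}, where an analogous statement is proved for the base $\Theta = [\mathbb{A}^1/\mathbb{G}_m]$, and upgrade it to work over $\Spec(R)$ by combining fiberwise finite generation over $K$ and over $\kappa$ via a Nakayama-type argument (cf.\ Lemma \ref{L:filling_conditions}). The crucial input is that $\beta = 0$ is inherited by the special fiber.

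For part (1), I would first extract a proper birational model $\mu_K\colon Y_K\to X_K$ with exceptional divisor $E_K$, and then spread it out to $\mu\colon Y\to X$ over $\Spec(R)$ so that $E_K$ is the generic fiber of an irreducible horizontal divisor $\mathcal{E}\subset Y$. Its special fiber $E_\kappa$ is a divisor over $X_\kappa$, and since log discrepancies and volumes of $\mu^*(-K_X-D)-t\mathcal{E}$ are constant in flat families with cohomology and base change, we obtain $A_{X_\kappa,D_\kappa}(E_\kappa) = A_{X_K,D_K}(E_K)$ and hence $\beta_{X_\kappa,D_\kappa}(E_\kappa) = \beta_{X_K,D_K}(E_K) = 0$. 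Since both $(X_K,D_K)$ and $(X_\kappa,D_\kappa)$ are K-semistable with a divisor computing $\beta=0$, the results of \cite{LWX18, BX18} yield finite generation of the $K[x]$-algebra $\bigoplus_{m,p}(\cF_K^p V_{K,m})x^{-p}$ and of the $\kappa[x]$-algebra defined analogously on $V_\kappa$. Because $\cF^p V_m \subset V_m$ is a saturated $R$-submodule, its reduction mod $\pi$ identifies with the $\kappa$-filtration defined by $E_\kappa$ (up to possibly shrinking to large enough $m$), so one can lift a finite set of generators from the $\kappa$-fiber to $\bigoplus_{m,p}(\cF^p V_m)x^{-p}$ that generate after restricting to both fibers; by graded Nakayama, they generate over $R[x]$.

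For part (2), once finite generation holds, $\tilde{\cX} = \underline{\Proj}_{\mathbb{A}^1_R}\bigoplus_{m,p}(\cF^p V_m)x^{-p}$ is defined, and away from $x=0$ it is tautologically isomorphic to $X\times(\mathbb{A}^1\setminus 0)$ with $\tilde{\cD} = D\times(\mathbb{A}^1\setminus 0)$. The restrictions $(\tilde{\cX},\tilde{\cD})|_{\mathbb{A}^1_K}$ and $(\tilde{\cX},\tilde{\cD})|_{\mathbb{A}^1_\kappa}$ coincide with the special test configurations of $(X_K,D_K)$ and $(X_\kappa,D_\kappa)$ induced by $E_K$ and $E_\kappa$, both of which have K-semistable central fibers by the $\beta=0$ hypothesis combined with Lemma \ref{l-sequivalent}. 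That $(\tilde{\cX},\tilde{\cD}) \to \mathbb{A}^1_R$ is a $\Q$-Gorenstein family of log Fano pairs (normal fibers, no fiber component in $\Supp\tilde{\cD}$, $\Q$-Cartierness of $K_{\tilde{\cX}/\mathbb{A}^1_R}+\tilde{\cD}$) can be verified using the same template as in the proof of Theorem \ref{t-scomplete}: normality of every geometric fiber follows from K-semistability of each fiber; the fact that $\tilde{\cD}$ does not contain a fiber component follows from a codimension analysis analogous to Proposition \ref{p-cod1} applied to each geometric fiber; and $\Q$-Cartierness of $K_{\tilde{\cX}/\mathbb{A}^1_R}+\tilde{\cD}$ follows from the construction since $\mathcal{O}_{\tilde{\cX}}(mr) \simeq \mathcal{O}(-mr(K_{\tilde{\cX}/\mathbb{A}^1_R}+\tilde{\cD}))$ on the generic locus and the special fiber at $0$ is a normal irreducible Cartier divisor.

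The main obstacle, as in the S-completeness proof, is establishing the fiberwise finite generation statement over $\kappa$; this rests on showing that the filtration on $V_\kappa$ induced by reduction mod $\pi$ is genuinely the divisorial filtration associated to $E_\kappa$ (so that the K-semistability machinery of \cite{BX18} applies) and on the constancy of $\beta$ in the family. After that, the remaining verifications---Nakayama to lift finite generation to $R[x]$ and the geometric checks on the extended family---proceed in direct parallel with the S-completeness argument of Theorem \ref{t-scomplete}.
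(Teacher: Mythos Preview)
Your approach has a genuine gap at its core: the identification of the reduction mod $\pi$ of the filtration $\cF^p V_m$ with a divisorial filtration on $V_{\kappa,m}$.

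First, spreading out $E_K$ to a horizontal divisor $\mathcal{E}\subset Y$ over $\Spec(R)$ does not guarantee that the special fiber $\mathcal{E}_\kappa$ is a prime divisor; it may be reducible or non-reduced, so ``$E_\kappa$'' is not well-defined as a divisorial valuation. Even if one arranges (say after base change) that $\mathcal{E}_\kappa$ is integral, the constancy of $A$ and of $\int_0^\infty \vol(\mu^*(-K-D)-t\mathcal{E})\,dt$ across fibers is not a formal consequence of flatness and cohomology-and-base-change; volumes of big classes are only semicontinuous in families in general.

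More seriously, even granting a prime $E_\kappa$ with $\beta_{X_\kappa,D_\kappa}(E_\kappa)=0$, your key assertion that the image of $\cF^p V_m$ in $V_{\kappa,m}$ equals $\{f:\ord_{E_\kappa}(f)\ge p\}$ is unjustified. Saturation gives only one containment: if $g\in\cF^pV_m$ then $\ord_{\mathcal{E}}(g)\ge p$, hence $\ord_{E_\kappa}(\bar g)\ge p$. The reverse---that every $f\in V_{\kappa,m}$ with $\ord_{E_\kappa}(f)\ge p$ lifts to $g\in V_m$ with $\ord_{\mathcal{E}}(g)\ge p$---fails in general. So the $\kappa$-fiber of your $R[x]$-algebra is not known to be the Rees algebra of a divisorial filtration, and the finite generation results of \cite{LWX18,BX18} do not apply to it. Your Nakayama step then has nothing to stand on.

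The paper circumvents exactly this difficulty. It passes to the relative cone $(Y,\Gamma)\to\Spec(R)$ and, for each $q$, spreads out the divisor $E_{K,q}$ on $Y_K$ (coming from the quasi-monomial ray $v_{1/q}$) to a divisor $E_q$ on $Y$. It never attempts to identify the special-fiber filtration as divisorial. Instead it uses two robust facts: lower semicontinuity of $\lct$ and equality of multiplicities $\mult(\fa_\bullet(\ord_{E_q})\cdot\cO_{Y_\kappa})=\mult(\fa_\bullet(\ord_{E_{K,q}}))$ (which holds precisely because each $\cF^pV_m\subset V_m$ is a subbundle). Combining these with Liu's inequality for the K-semistable special fiber yields $A_{Y,\Gamma+Y_\kappa}(E_q)-\lct(Y,\Gamma+Y_\kappa;\fa_\bullet(\ord_{E_q}))\to 0$. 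Then ACC for log canonical thresholds and a BCHM extraction produce, for $q\gg0$, a model on which $-E_q$ is ample over $Y$, giving finite generation directly. The lc property of the degenerated cone then feeds into the verification of (2). None of this requires knowing what the special-fiber filtration ``is''.
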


The proof is a modification of an argument in \cite[Sect. 3]{LWX18}. Similar arguments are also used to prove the main theorems in \cite{BX18}. Throughout, we will use notation and background material from \cite[Sect. 2]{BX18} on valuations, log canonical thresholds, and the normalized volume function.

\begin{proof}
Let $(Y,\Gamma) \to \Spec(R)$ denote the relative cone over $(X,D) \to \Spec(R)$ with respect to the polarization $L$.
Hence, $Y = \underline{\Spec}_R( V)$ and $\Gamma$ is defined via pulling back $D$. 
Note that $(Y_K,\Gamma_K)$ and $(Y_\kappa,\Gamma_\kappa)$ 
are the cones over $(X_K,D_K)$ and $(X_{\kappa}, D_{\kappa})$. 

Following \cite[Sect. 2.5.1]{BX18}, 
the divisor $E_K$ over $X_K$ induces a ray of quasimonomial valuations 
\[
\{v _t  \, \vert \, t\in [0,+\infty) \}  \subset \Val_{Y_K}
\]
satisfying
$$
A_{Y_K, \Gamma_K}(v_t) = 1/r + t A
\mbox{\ \ and\ \ } 
\fa_{p}(v_t)  =  \bigoplus_m \cF_K^{(p-m)/t} V_{K,m}.
$$

For each $q\in \N$, there is a divisor $E_{K,q}$ over $Y_K$ such that $q\cdot v_{1/q} = \ord_{E_{K,q}}$. 
The divisor  $E_{K,q}$ over $Y_K$ extends to a divisor $E_{q}$ over $Y$. 
Note that 
$
A_{Y, \Gamma} (E_q) =
A_{Y_K, \Gamma_K}(E_{K,q})$
and 
\begin{equation}\label{e-idealE_k}
\fa_{p}(\ord_{ E_{q}}) =  \bigoplus_{m \in \N} \cF^{p-mq} V_{m}.
\end{equation}
To see Equation \eqref{e-idealE_k} holds, observe that the order of vanishing of  $f\in \cO_{Y}$ along $E_{q}$ equals the order of vanishing of 
 $f \cdot \cO_{Y_K}$ along $E_{K,q}$.
Hence, the statement follows from the definition of $\cF$ and the formula
$
\fa_{p}(\ord_{E_{K,q}}) =  \bigoplus_m \cF_{ {K}}^{p-mq} V_{K,m}$.\medskip

\noindent \emph{ Claim 1}. The following holds:
\[
\lim_{q \to \infty}  \left( A_{Y, \Gamma+ Y_\kappa}( E_q )  - \lct(Y, \Gamma+ Y_\kappa; \fa_\bullet ( \ord_{E_q} ))  \right)
= 0.\]

To prove this claim, for each positive integer $q$, consider the graded sequence of ideals on $Y_\kappa$ given by 
\[
\fb_{q ,\bullet} : = \fa_\bullet(\ord_{E_q}) \cdot \cO_{Y_\kappa}.\]
Note that 
$\fa_{\bullet}(\ord_{E_{K,q}})  = 
\fa_\bullet(\ord_{E_q}) \cdot \cO_{Y_K}$
by \eqref{e-idealE_k}.
Therefore, the lower semicontinuity of the log canonical threshold and  \cite[Eq. (3)]{BX18} imply
\begin{equation}\label{e-lctAineq}
\lct\left(Y_\kappa, \Gamma_\kappa; \fb_{q,\bullet} \right) 
\leq 
\lct\left(Y_K,\Gamma_K; \fa_{\bullet} (\ord_{E_{K,q} }) \right) 
\leq 
A_{Y_K, \Gamma_K}( E_{K,q}) 
\end{equation}
Additionally,
\small{
\begin{equation}\label{e-multequal}
\mult(
\fb_{q,\bullet} ) 
= 
\lim_{p \to \infty}  \frac{ \dim_{\kappa} \left( \cO_{Y_\kappa} /  \fb_{q,p}\right) }{p^{n+1}/ (n+1)! }
=
\lim_{p \to \infty}  \frac{ \dim_{K} \left( \cO_{Y_K} /  \fa_{p}(\ord_{E_{K,q}}) \right)}{p^{n+1}/ (n+1)! } 
 = \mult(  \fa_\bullet(\ord_{E_{K,q}}) ),
 \end{equation}
 }
 where the left and right equalities is the formula for multiplicity in \cite[Thm 3.8]{LM09}
 and the center equality follows from \eqref{e-idealE_k} and the fact that each $\cF^p V_m \subset V_m$ is a subbundle.

We aim to show the inequalities:
\small{
 \begin{equation}\label{e-keyineq}
\frac{Q}{r}  \leq 
\lct( Y_\kappa ,\Gamma_\kappa;
\fb_{q,\bullet}  )^{n+1}
 \mult(
\fb_{q,\bullet} )  
\leq 
A_{Y_K, \Gamma_K}(E_{K,q})^{n+1}
 \mult(
\fa_{\bullet} (\ord_{E_{K,q}})) 
\leq \frac{Q}{r} + O\left( \frac{1}{q^2}\right)
,\end{equation}
}
with $Q: = (-K_{X_K}-D_K)^n= (-K_{X_\kappa}-D_\kappa)^n$.
The first inequality follows from \cite[Thm 7]{Liu18} and the assumption  that $(X_{\overline{\kappa}},D_{\overline{\kappa}})$ is K-semistable and the second  from \eqref{e-lctAineq} and \eqref{e-multequal}. 
For the remaining equality, Li's derivative formula (for example, see \cite[Prop. 2.12]{BX18}) gives
\[
\frac{d \,  \hvol(v_t) }{dt}\bigg \lvert_{t=0^+}  = (n+1) \beta_{X_K,D_K}(E_K).\]
Since the latter is zero, a Taylor expansion implies
\[
\hvol (v_{1/q}) = \hvol(v_0) + O\left( \frac{1}{q^2} \right) = \frac{Q}{r} + O\left( \frac{1}{q^2} \right).\]
Using that $\hvol$ is scaling invariant, we observe
\[
\hvol(v_{1/q}) = \hvol( \ord_{E_{K,q}}) := A_{Y_K, \Gamma_K}(E_{K,q})^{n+1} \mult( \fa_\bullet(\ord_{E_{K,q}}))
 \] and \eqref{e-keyineq} follows.

Comparing  \eqref{e-multequal} and \eqref{e-keyineq}, we see
\[
\frac{1}{1 + O\left( \frac{1}{q^2} \right)}  
\leq 
\left(
\frac{ 
\lct( Y_\kappa,\Gamma_\kappa;
\fb_{q,\bullet} )}{A_{Y_K, \Gamma_K}(E_{K,q})}
\right)^{n+1} \leq 1
.\]
Since
$
A_{Y_K, \Gamma_K}(E_{K,q}) = A_{Y, \Gamma}( E_q) =  A_{Y, \Gamma +Y_\kappa}( E_q)
 $,
 
\[
\lct( Y_\kappa,\Gamma_\kappa;\fb_{q,\bullet} )  
 =
  \lct( Y,\Gamma +Y_\kappa;\fa_\bullet( \ord_{E_q}  ) )
   \]
   by inversion of adjunction, and $(1+O( \frac{1}{q^2} ))^{1/(n+1)} =1+O(\frac{1}{q^2})$, it follows that
   \[
1 - O\left( \frac{1}{q^2} \right)
\leq 
\frac{ 
\lct( Y,\Gamma +Y_\kappa;
\fa_\bullet( \ord_{E_q} ) )}{A_{Y, \Gamma +Y_\kappa}( E_q)}
\leq 1.\]
Recall, $A_{Y,\Gamma+ Y_\kappa}(E_{q})  = A_{Y, \Gamma}( E_q) = q/r + A$ is of order $O(q)$. 
Therefore, 
\[
A_{Y,\Gamma+ Y_\kappa}(E_{q})
- 
\lct( Y,\Gamma +Y_\kappa;
\fa_{\bullet}(\ord_{E_q} ) )
= 
A_{Y,\Gamma+ Y_\kappa}(E_{q})
\left( 
1 - 
\frac{ \lct( Y,\Gamma +Y_\kappa;
\fa_{\bullet}(\ord_{E_q} ) )}
{A_{Y,\Gamma+ Y_\kappa}(E_{q}) }
 \right)
  \]
 is of order $O(1/q)$ and the desired limit is 0.
 \medskip

\noindent \emph{Claim 2}: For $q\gg0$, there exists an extraction $E_q \subset  Y_q \overset{\mu}{\to} Y$ such that  
\[
(Y_q, \mu_*^{-1} ( \Gamma +Y_\kappa) + E_q) \]
is lc. (By an extraction, we mean $\mu$ is a proper birational morphism, $Y_q$ is normal, $E_q$ appears as a divisor on $Y_q$, and $-E_q$ is $\mu$-ample.)
\medskip

Set  $\e_k : = A_{Y, \Gamma+ Y_\kappa}( E_q )  - \lct(Y, \Gamma+ Y_\kappa; \fa_\bullet( \ord_{E_q})  )$.
Since  $\lim_{q\to \infty} \e_q= 0$ by Claim 1, we may fix $q\gg0$ so that $\e_q <1$. 
Hence, \cite[Prop. 2.2]{BX18} may be applied to get an extraction
$\mu:Y_{q} \to Y$ of $E_q$  with 
\[
(Y_q, \mu_*^{-1} (\Gamma +Y_\kappa) + (1-\e_q) E_q)
\]
 lc.
Since $\lim_{q\to \infty} \e_q= 0$, the ACC for log canonical thresholds \cite{HMX14} implies 
$(Y_q, \mu_*^{-1} (\Gamma +Y_{\kappa}) + E_q)$ is lc for $q \gg0$ and the proof of the claim is complete. 
\medskip

Since $-E_q$ is $\mu$-ample,
$\bigoplus_{p \in \N} \mu_* \cO_{Y_q} (- pE_q)$
is a finitely generated $\cO_Y$-algebra.
Using that 
\[\mu_* \cO_{Y_k} (- pE_q) = \fa_p(\ord_{E_q})  = \bigoplus_{m } \cF^{p-mq} V_m,\]
we see 
\[
\bigoplus_{p \in \N } \bigoplus_{m \in \N }  \cF^{p-mq} V_m = \bigoplus_{m \in \N} \bigoplus_{p \geq -mq} \cF^p V_m
\]
is a finitely generated $V$-algebra. Since $V$ is a finitely generated $R$-algebra, it follows that   
$\underset{m}{\bigoplus} \underset{p}{\bigoplus}(\cF^p V_m)x^{-p}$ is finitely generated $R[x]$-algebra
 and we may  consider the degeneration $(\tilde{\cX},\tilde{\cD}) \to \mathbb{A}^1_R$ by taking ${\rm Proj}$. \medskip

We also consider the degeneration  $(\cY,\tilde{\Gamma})$ of $(Y,\Gamma)$ defined by
\[
\cY : =  \underline{\Spec}_{\mathbb{A}^1_R} \Big( \bigoplus_{p \in \Z}  \fa_p x^{-p} \Big) ,\quad
\text{ where } 
\fa_p: = \mu_{*} \cO_{Y_{q}}(-p E_{q}) \subseteq \cO_{Y}
\]
and $\tilde{\Gamma}$ is the degeneration of $\Gamma$ as in \cite[Defn. 2.19]{LWX18}. 
Since $(Y_q, \mu_*^{-1} ( \Gamma +Y_\kappa) + E_q)$ is lc, a relative version of \cite[Lem. 2.20]{LWX18} implies 
$(\cY, \tilde{\Gamma} + \cY_{x=0} + \cY_\kappa)$ is lc.
Using that $(\tilde{\cX}, \tilde{\cD})$ is a $\mathbb{G}_m$-quotient of an open set of $(\cY, \tilde{\Gamma})$, 
we see $( \tilde{\cX}, \tilde{\cD}+ \tilde{\cX}_\kappa+\tilde{\cX}_{x=0})$ is lc as well.

Observe that $(\tilde{\cX}_K,\tilde{\cD}_K) \to \mathbb{A}^1_K$ is the test configuration induced by the filtration $\cF_K$. 
By \cite[Section 3.2]{Fuj17a}, this test configuration is normal and its Futaki invariant is a multiple of $\beta_{X_K,D_K}(E_K)$,  which is zero. 
Since $\Fut ( \tilde{\cX}_{ \overline{K}},\tilde{\cD}_{\overline{K}})= \Fut(\tilde{\cX}_K,\tilde{\cD}_K)$ and the latter is zero, $( \tilde{\cX}_{ \overline{K}},\tilde{\cD}_{\overline{K}}) \to \mathbb{A}^1_{\overline{K}}$ must be special (otherwise \cite[Thm. 1]{LX14}
would imply there exists a test configuration of $(X_{\overline{K}},D_{\overline{K}})$ with negative Futaki invariant).
Appling Lemma \ref{l-sequivalent} gives that the geometric fiber over $0_K$ is K-semistable.

We will proceed to show
 $(\tilde{\cX},\tilde{\cD}) \to \mathbb{A}^1_R$  is $\Q$-Gorenstein family of log Fano pairs. 
 Since the statement holds over $\{ \pi \neq 0 \}$ and $\{x\neq 0\}$, it remains to consider the behavior over $0 \in \mathbb{A}^1_R$. 
Since $(\tilde{\cX},\tilde{\cD}+ \tilde{\cX}_\kappa+\tilde{\cX}_{x=0})$ is lc, 
$K_{\tilde{\cX}} + \tilde{\cD}$ is $\Q$-Cartier 
and, by \cite[Prop. 2.32.2]{Kol13}, $\tilde{\cD}$ does not contain an irreducible component of $\tilde{\cX}_\kappa \cap \tilde{\cX}_{x=0}$. 
We  are left to show that the geometric fiber over $0$ is a log Fano pair.

First, we claim  that $\tilde{\cX}_\kappa$ is normal.
By Serre's criterion, it suffices to show that $\tilde{\cX}_\kappa$ is $S_2$ and $R_1$.
To verify the first condition, note that  $(\tilde{\cX},\tilde{\cD})$  is klt,  since $(\tilde{\cX},\tilde{\cD}+ \tilde{\cX}_\kappa+\tilde{\cX}_{x=0})$ is lc and klt away from $\{ \pi x = 0 \}$.
Therefore,  \cite[Prop. 5.25]{KM98} implies $\cX_\kappa$ is CM and, hence, $S_2$.
For the second condition, note that $ \Supp(\tilde{\cX}_\kappa+\tilde{\cX}_{x=0})$  has at worst nodes at codimension two points of $\tilde{\cX}$ by  \cite[Prop 2.32.2]{Kol13}. Therefore, $\cX_\kappa$ is $R_1$ in a neighborhood of $\cX_{\kappa} \cap \cX_{x=0}$. Since $\cX_{\kappa}  \setminus \cX_{x=0}\simeq X_\kappa \times (\mathbb{A}^1\setminus 0) $ and $X_\kappa$ is $R_1$, this implies $\cX_\kappa$ is $R_1$.

Now, recall that the Futaki invariant may be written as a combination of intersection numbers of line bundles and intersections number are locally constant in flat projective families.
Therefore,
$\Fut( \tilde{\cX}_{\overline{K} }, \tilde{\cD}_{\overline{K}})=  \Fut( \tilde{\cX}_{\overline{\kappa}} , \tilde{\cD}_{\overline{\kappa}})$ and the latter is also zero.
Since $(X_{\overline{\kappa}}, D_{\overline{\kappa}})$ is K-semistable 
and
 $ \Fut( \tilde{\cX}_{\overline{\kappa}} , \tilde{\cD}_{\overline{\kappa}})=0$,
the test configuration $(\tilde{\cX}_{ \overline{\kappa}},\tilde{\cD}_{\overline{\kappa}}) \to \mathbb{A}^1_{\overline{\kappa}}$ 
must be special (otherwise \cite[Theorem 1]{LX14} would imply there exists a test configuration with negative Futaki invariant). 
Applying Lemma \ref{l-sequivalent} gives that the fiber over $0$ is K-semistable. 
    \end{proof}

\subsubsection{Proof of $\Theta$-reductivity result}
 We will now deduce Theorem \ref{t-Thetared} from Proposition \ref{p-extendE}.

\begin{proof}[Proof of Theorem \ref{t-Thetared}]
Following Section \ref{ss-tpolar}, the test configuration $(\cX_K, \cD_K) \to \mathbb{A}^1_K$, corresponds to a 
$\Z$-filtration $\cG_K$ of $V_K$.
By setting
 \[
 \cG^p V_m : = \cG_K^p V_{K,m} \cap V_m
 \quad \quad \text{ for each $p\in \Z$ },\]
 we get a filtration $\cG$
  of $V_m$ by subbundles, which restricts to the filtration $\cG_K$ over $\Spec(K)$.
 We consider the graded $R[x]$-algebra
$\bigoplus_{m\in \N} \bigoplus_{p \in \Z} \left( \cG^p V_m \right) x^{-p} $
. 
 
Since the test configuration $(\cX_K, \cD_K)$ is special, 
the filtration $\cG_K$ is induced by a divisorial valuation of the form $b\cdot \ord_{E_k}$ over $X_K$ \cite[Claim 5.4]{Fuj16}. 
Specifically, there is a divisor $E_K$ over $X_K$ and  $b\in \Z_{>0}$ so that
\[
\cG_{K}^{p}  V_{K,m}  = \cF_{K}^{   mr A +   \lceil p/b \rceil  } V_{K,m},\]
where $\cF_{K}$ is the filtration of $V_K$ defined by $E_K$ and $A:=A_{X_K,D_K}(E_K)$. 

Observe that $\Fut( \cX_K, \cD_K)=0$.
Indeed, $\Fut(\cX_K, \cD_K)=\Fut (\cX_{\overline{K}}, \cD_{ \overline{K}})$
and the latter is the same as the Futaki invariant associated to the  $\mathbb{G}_m$-action on $(\cX_{\overline{K}}, \cD_{ \overline{K}})_0$. 
Since the Futaki invariants associated to a $\mathbb{G}_m$-action and its inverse add to zero \cite[Lem. 2.23]{LWX18} and $(\cX_{\overline{K}}, \cD_{ \overline{K}})_0$ is K-semistable, they must both be zero.

Now, $\beta_{X_K,D_K}(E_K)$ is a multiple of
 $\Fut(\cX_K,\cD_K)$  by \cite[Thm. 5.1]{Fuj16}. 
 Therefore, the value is zero and we may apply Proposition \ref{p-extendE}.1
to see that ${\bigoplus}_m {\bigoplus}_p \left( \cG^p V_m\right)x^{-p}$ is a finitely generated $R[x]$-module. 
Furthermore, if we set 
$$\tilde{\cX}: =\underline{\Proj}_{\mathbb{A}^1_R} \Big( \bigoplus_{m \in \N} \bigoplus_{p \in \N} \left( \cG^p V_m\right)x^{-p}  \Big)$$ and  $\tilde{\cD}$ equal to the component-wise closure of $D\times  (\mathbb{A}^1 \setminus 0 ) $ under the embedding 
$${X\times  (\mathbb{A}^1 \setminus 0) \simeq\tilde{ \cX}|_{x\neq 0} \hookrightarrow \tilde{\cX}} ,$$ 
then $(\tilde{\cX},\tilde{\cD}) \to \mathbb{A}^1_R$ is  a finite base change of the family considered in Proposition \ref{p-extendE}. Hence, $(\tilde{\cX},\tilde{\cD}) \to \mathbb{A}^1_R$ is a $\Q$-Gorenstein family of log Fano pairs and the geometric fiber over $0\in \mathbb{A}^1_R$  is  K-semistable.
By Lemma \ref{L:filling_conditions}, this is the unique extension of $(\cX,\cD) \to \mathbb{A}^1_R \setminus 0$.
\end{proof}

\begin{proof}[Proof of Theorem \ref{t-main}]
The S-completeness and $\Theta$-reductivity of $\kkX^{\rm Kss}_{n,V}$ statements follow immediately from Theorems \ref{t-scomplete} and  \ref{t-Thetared}.
\end{proof}

\begin{proof}[Proof of Corollary \ref{c-goodmoduli}]
It follows from Theorem \ref{t-main} and Lemma \ref{l-substack} that $\kkX$ is S-complete and $\Theta$-reductive with respect to essentially of finite type DVRs.   Theorem \ref{T:existence} and Remark \ref{R:existence} imply 
that $\kkX$ has a separated good moduli space. 
\end{proof}

\begin{bibdiv}
\begin{biblist}

 \bib{Alp13}{article}{
    AUTHOR = {Alper, J.},
     TITLE = {Good moduli spaces for {A}rtin stacks},
   JOURNAL = {Ann. Inst. Fourier (Grenoble)},
  FJOURNAL = {Universit\'e de Grenoble. Annales de l'Institut Fourier},
    VOLUME = {63},
      YEAR = {2013},
    NUMBER = {6},
     PAGES = {2349--2402},
  }

 \bib{AFS17}{article}{
    AUTHOR = {Alper, J.} 
    AUTHOR={Fedorchuk, M.}
    AUTHOR= {Smyth, D.},
     TITLE = {Second flip in the {H}assett-{K}eel program: existence of good
              moduli spaces},
   JOURNAL = {Compos. Math.},
  FJOURNAL = {Compositio Mathematica},
    VOLUME = {153},
      YEAR = {2017},
    NUMBER = {8},
     PAGES = {1584--1609},
  }

   \bib{AHR15}{article}{
    AUTHOR = {Alper, J.} 
    AUTHOR={Hall, J.}
    AUTHOR= {Rydh, D.},
     TITLE = {A {L}una \'{e}tale slice theorem for algebraic stacks},
   JOURNAL = {Ann. of Math. (2)},
    VOLUME = {191},
      YEAR = {2020},
    NUMBER = {3},
     PAGES = {675--738},
    }

  \bib{AHH18}{article}{
   AUTHOR={Alper, J.} 
    AUTHOR = {Halpern-Leistner, D.} 
    AUTHOR={Heinloth, J.},
     TITLE = { Existence of moduli spaces for algebraic stacks},
   JOURNAL = {arXiv:1812.01128},
      YEAR = {2018},
 }

  \bib{AHH19}{article}{
   AUTHOR={Alper, J.} 
    AUTHOR = {Halpern-Leistner, D.} 
    AUTHOR={Heinloth, J.},
     TITLE = {Cartan-Iwahori-Matsumoto decompositions for reductive groups},
   JOURNAL = {arXiv:1903.00128},
      YEAR = {2019},
 }

  \bib{Art69}{article}{ 
    AUTHOR = {Artin, M.},
     TITLE = {Algebraic approximation of structures over complete local   rings},
   JOURNAL = {Inst. Hautes \'Etudes Sci. Publ. Math.},
  FJOURNAL = {Institut des Hautes \'Etudes Scientifiques. Publications
              Math\'ematiques},
    NUMBER = {36},
      YEAR = {1969},
     PAGES = {23--58},
 }
	
  \bib{BBEGZ11}{article}{
   AUTHOR={Berman, R.} 
    AUTHOR = {Boucksom, S.} 
    AUTHOR={ Eyssidieux, P.},
      AUTHOR={Guedj, V.},
         AUTHOR={Zeriahi, A.},
     TITLE = { K\"ahler-Einstein metrics and the K\"ahler-Ricci flow on log Fano varieties},
   JOURNAL = {J. Reine Angew. Math.},
      YEAR = {2019},
  volume={751},
   pages={27-89},

 }


\bib{BCHM10}{article}{
   author={Birkar, C.},
  author={Cascini, P.},
  author={Hacon, C.},
  author={McKernan, J.},
 title={Existence of minimal models for varieties of log general type},
  journal={J. Amer. Math. Soc.},
  volume={23},
   date={2010},
   number={2},
   pages={405--468},
}


\bib{BHJ17}{article}{
    AUTHOR = {Boucksom, S.} 
    AUTHOR={Hisamoto, T.} 
    AUTHOR={ Jonsson, M.},
     TITLE = {Uniform {K}-stability, {D}uistermaat-{H}eckman measures and
              singularities of pairs},
   JOURNAL = {Ann. Inst. Fourier (Grenoble)},
  FJOURNAL = {Universit\'e de Grenoble. Annales de l'Institut Fourier},
    VOLUME = {67},
      YEAR = {2017},
    NUMBER = {2},
     PAGES = {743--841},
 }


\bib{BL18}{article}{
   author={Blum, H.},
   AUTHOR={Liu, Y.} 
   TITLE = {The normalized volume of a singularity is lower semicontinuous},
   JOURNAL = {To appear in J. Eur. Math. Soc., arXiv:1802.09658},
      YEAR = {2018},
   }	

\bib{BLX19}{article}{
   author={Blum, H.},
   AUTHOR={Liu, Y.} 
      AUTHOR={Xu, C.} 
   TITLE = {Openness of K-semistability for Fano varieties},
   JOURNAL = {arXiv:1907.02408},
      YEAR = {2019},
   }	

\bib{BX18}{article}{
   author={Blum, H.},
   AUTHOR={Xu, C.} 
   TITLE = {Uniqueness of K-polystable degenerations of Fano varieties},
  JOURNAL = {Ann. of Math. (2)},
     VOLUME = {190},
      YEAR = {2019},
    NUMBER = {2},
     PAGES = {609--656},
     }

\bib{CDS}{article}{
    AUTHOR = {Chen, X.},
    AUTHOR={Donaldson, S.} 
    AUTHOR={Sun, S.},
     TITLE = {K\"ahler-{E}instein metrics on {F}ano manifolds, {I}-{III}}
   JOURNAL = {J. Amer. Math. Soc.},
  FJOURNAL = {Journal of the American Mathematical Society},
    VOLUME = {28},
      YEAR = {2015},
    NUMBER = {1},
     PAGES = {183--197, 199--234, 235--278},
 }

\bib{CS18}{article}{
    AUTHOR = {Cheltsov, I.},
    AUTHOR={Shramov, C.} 
     TITLE = {K\"ahler-{E}instein  Fano threefolds of degree 22}
   JOURNAL = {arXiv:1803.02774v4},
       YEAR = {2018},
    }

  \bib{Don02}{article}{
    AUTHOR = {Donaldson, S. K.},
     TITLE = {Scalar curvature and stability of toric varieties},
   JOURNAL = {J. Differential Geom.},
    VOLUME = {62},
      YEAR = {2002},
    NUMBER = {2},
     PAGES = {289--349},
    }

 \bib{FR06}{article}{
    AUTHOR = {Fine, J.}
    AUTHOR= {Ross, J.},
     TITLE = {A note on positivity of the {CM} line bundle},
   JOURNAL = {Int. Math. Res. Not.},
  FJOURNAL = {International Mathematics Research Notices},
      YEAR = {2006},
     PAGES = {Art. ID 95875, 14},
  }


\bib{Fuj17a}{article}{
AUTHOR={Fujita, K.},
TITLE = {K-stability of log Fano hyperplane arrangements},
JOURNAL = {arXiv:1709.08213},
 YEAR = {2017},
}

    \bib{Fuj18}{article}{
  author={Fujita, K.},
   title={Optimal bounds for the volumes of K\"ahler-Einstein Fano manifolds},
   journal={ Amer. J. Math.},
     VOLUME = {140},
      YEAR = {2018},
    NUMBER = {2},
     PAGES = {391-414},
}

 \bib{Fuj16}{article}{
  author={Fujita, K.},
   title={A valuative criterion for uniform K-stability of $\mathbb{Q}$-Fano varieties},
   journal={J. Reine Angew. Math.},
     date={2019},
  volume={751},
   pages={309-338},
}

\bib{git}{book}{
   author={Mumford, D.},
   author={Fogarty, J.},
   author={Kirwan, F.},
   title={Geometric invariant theory},
   series={Ergebnisse der Mathematik und ihrer Grenzgebiete (2) [Results in
   Mathematics and Related Areas (2)]},
   volume={34},
   edition={3},
   publisher={Springer-Verlag, Berlin},
   date={1994},
   pages={xiv+292},
   isbn={3-540-56963-4},
   doi={10.1007/978-3-642-57916-5},
}

\bib{HMX14}{article}{
  author={Hacon, C.},
  author={McKernan, J.},
  author={Xu, C.},
   title={ACC for log canonical thresholds },
   journal={Ann. of Math.},
  date={2014},
   volume={180},
   pages={523-571},
   issue={2}
}

  \bib{Hal14}{article}{
  author={Halpern-Leistner, D.},
   title={On the structure of instability in moduli theory},
   journal={ arXiv:1411.0627v4},
     YEAR = {2018},
}

\bib{hartshorne}{article}{
   author={Hartshorne, Robin},
   title={Stable reflexive sheaves},
   journal={Math. Ann.},
   volume={254},
   date={1980},
   number={2},
   pages={121--176},
   issn={0025-5831},
   review={\MR{597077}},
   doi={10.1007/BF01467074},
}

\bib{Jia17}{article}{
  author={Jiang, C.},
   title={Boundedness of $\mathbb{Q}$-Fano varieties with degrees and alpha-invariants bounded from below},
   journal={to appear in Ann. Sci. \'Ec. Norm. Sup\'er., arXiv:1705.02740},
  date={2017},
  }

 \bib{Kol08}{article}{
    AUTHOR = {Koll{\'a}r, J.},
     TITLE = {Hulls and Husks},
   journal= {arXiv:0805.0576},
    year = {2008},
      }

	 \bib{Kol13}{book}{
    AUTHOR = {Koll{\'a}r, J.},
     TITLE = {Singularities of the minimal model program},
    series= {Cambridge Tracts in Mathematics},
    VOLUME = {200},
      NOTE = {With a collaboration of S{\'a}ndor Kov{\'a}cs},
 PUBLISHER = {Cambridge University Press},
   ADDRESS = {Cambridge},
      YEAR = {2013},
   }
   
	 \bib{Kol19}{book}{
    AUTHOR = {Koll{\'a}r, J.},
     TITLE = {Families of varieties of general type},
    series= {book in preparation },
     note={\url{https://web.math.princeton.edu/~kollar/book/modbook20170720-hyper.pdf}},
       YEAR = {2017},
   }

 \bib{Kol19b}{article}{
    AUTHOR = {Koll{\'a}r, J.},
     TITLE = {Families of divisors},
   journal= {arXiv:1910.00937},
    year = {2019},
      }

   \bib{KM98}{book}{
   author={Koll{\'a}r, J.},
   author={Mori, S.},
   title={Birational geometry of algebraic varieties},
   series={Cambridge Tracts in Mathematics},
   volume={134},
   note={With the collaboration of C. H. Clemens and A. Corti},
   publisher={Cambridge University Press},
   place={Cambridge},
   date={1998},
   pages={viii+254},
}
  
\bib{LM09}{article}{
    AUTHOR = {Lazarsfeld, R.}
   AUTHOR={Musta\c{t}\v{a}, M.},
     TITLE = {Convex bodies associated to linear series},
   JOURNAL = { Ann. Sci. Ec. Norm. Super},
       VOLUME = {42},
      YEAR = {2009},
    NUMBER = {5},
     PAGES = {783-835},}


  \bib{Li17}{article}{
  author={Li, C.},
   title={ K-semistability is equivariant volume minimization},
 JOURNAL = {Duke Math. J.},
  FJOURNAL = {Duke Mathematical Journal},
    VOLUME = {166},
      YEAR = {2017},
    NUMBER = {16},
     PAGES = {3147--3218},
      ISSN = {0012-7094},
}

   \bib{Li18}{article}{
  author={Li, C.},
   title={Minimizing normalized volumes of valuations},
   journal={Math. Zeit.},
    VOLUME = {289},
      YEAR = {2018},
    NUMBER = {1-2},
     PAGES = {491--513},
  date={2018},
}

 \bib{Liu18}{article}{
  author={Liu, Y.},
   TITLE = {The volume of singular {K}\"ahler--{E}instein {F}ano varieties},
   JOURNAL = {Compos. Math.},
  FJOURNAL = {Compositio Mathematica},
    VOLUME = {154},
      YEAR = {2018},
    NUMBER = {6},
     PAGES = {1131--1158},}

 \bib{LLSW17}{article}{
  author={Lee, K.},
   AUTHOR = {Li, Z.}
   AUTHOR={Sturm, J.},
    AUTHOR={Wang, X.},
   TITLE = {Asymptotic Chow stability of toric Del Pezzo surfaces},
   JOURNAL = {To appear in Math. Res. Lett., arXiv:1711.10099},
      YEAR = {2017},
  }

\bib{LLX18}{article}{
    AUTHOR = {Li, C.}
   AUTHOR={Liu, Y.},
    AUTHOR={Xu, C.},
   title={A guided tour to normalized volumes},
   conference={
      title={Geometry Analysis, In Honor of Gang Tian's 60th Birthday},
   },
   book={
      series={Progress in Mathematics},
      volume={333},
      publisher={ Birkhuser/Springer, Cham,  2020},
   },
   date={2020},
   pages={167-219},

}

\bib{LWX18}{article}{
author={Li, C.},
author={Wang, X.},
author={Xu, C.},
title={Algebraicity of the metric tangent cones and equivariant K-stability},
journal={arXiv:1805.03393v1},
year={2018},
}

 \bib{LWX14}{article}{
  author={Li, C.},
  author={Wang, X.},
  author={Xu, C.}
   title={ On the proper moduli spaces of smoothable K\"ahler-Einstein Fano varieties},
   journal={Duke Math. J.},
   VOLUME = {168},
 YEAR = {2019},
    NUMBER = {8},
     PAGES = {1387-1459},
}

\bib{LX14}{article}{
    AUTHOR = {Li, C.}
    AUTHOR={Xu, C.},
     TITLE = {Special test configuration and {K}-stability of {F}ano       varieties},
   JOURNAL = {Ann. of Math. (2)},
     VOLUME = {180},
      YEAR = {2014},
    NUMBER = {1},
     PAGES = {197--232},
}



\bib{Mat57}{article}{
    AUTHOR = {Matsushima, Y.},
     TITLE = {Sur la structure du groupe d'hom\'eomorphismes analytiques d'une
              certaine vari\'et\'e k\"ahl\'erienne},
   JOURNAL = {Nagoya Math. J.},
  FJOURNAL = {Nagoya Mathematical Journal},
    VOLUME = {11},
      YEAR = {1957},
     PAGES = {145--150},
 }

 \bib{OSY12}{article}{
   AUTHOR = {Ono, H.},
   AUTHOR={Sano, Y.},
   AUTHOR={Yotsutani, N.}
   TITLE = {An example of an asymptotically Chow unstable manifold with constant scalar curvatures},
   JOURNAL = {Ann. Inst. Fourier (Grenoble)},
    VOLUME = {62},
      YEAR = {2012},
    NUMBER = {4},
     PAGES = {1265--1287},}

     \bib{dejong-starr}{article}{
   author={Starr, J.},
   author={de Jong, J.},
   title={Almost proper GIT-stacks and discriminant avoidance},
   journal={Doc. Math.},
   volume={15},
   date={2010},
   pages={957--972},
   issn={1431-0635},
}
	

 \bib{Tia97}{article}{
    AUTHOR = {Tian, G.},
     TITLE = {K\"ahler-{E}instein metrics with positive scalar curvature},
   JOURNAL = {Invent. Math.},
    VOLUME = {130},
      YEAR = {1997},
    NUMBER = {1},
     PAGES = {1--37},
}

\bib{Tia15}{article}{
    AUTHOR = {Tian, G.},
     TITLE = {K-stability and {K}\"ahler-{E}instein metrics},
   JOURNAL = {Comm. Pure Appl. Math.},
  FJOURNAL = {Communications on Pure and Applied Mathematics},
    VOLUME = {68},
      YEAR = {2015},
    NUMBER = {7},
     PAGES = {1085--1156},
      }

\bib{Xu20}{article}{
   author={Xu, C.},
   title={A minimizing valuation is quasi-monomial},
   journal={Ann. of Math. (2)},
   volume={191},
   date={2020},
   number={3},
   pages={1003--1030},
 }

 \bib{XZ19}{article}{
  author={Xu, C.},
   author={Zhuang, Z.},
   title={On positivity of the CM line bundle on K-moduli spaces},
   journal={arXiv:1912.12961},
   date={2019},
 }
 
 \bib{Zhu20}{article}{
   author={Zhuang, Z.},
   title={Optimal destabilizing centers and equivariant K-stability},
   journal={arXiv:2004.09413},
   date={2020},
 }



\end{biblist}
\end{bibdiv}
\bigskip

\end{document}